\newcommand{\G}{\mathcal{G}}
\newcommand{\SSS}{\mathcal{S}}
\DeclareMathAlphabet\gothic{U}{euf}{m}{n}
\newtheorem{theorem}{Theorem}
\newtheorem{lemma}[theorem]{Lemma}
\newtheorem{proposition}[theorem]{Proposition}
\newtheorem{corollary}[theorem]{Corollary}
\newtheorem{remark}[theorem]{Remark}
\newcommand{\LF}{\overrightarrow{\Delta}}
\newcommand{\dtent}{\frac{\mathrm{d}t \mathrm{d}y}{Vol(y,t^{1/2})}}
\newcommand{\M}{\mathcal{M}}
\title{Conical square functionals on Riemannian manifolds}
\author{Thomas Cometx - Institut de Mathématiques de Bordeaux}
\date{}
\begin{document}
\maketitle
\noindent
{\bf Abstract:} Let $L = \Delta + V$ be Schrödinger operator with a non-negative potential $V$ on a complete Riemannian manifold $M$. We prove that the conical square functional associated with $L$ is bounded on $L^p$ under different assumptions. This functional is defined by 
$$ \G_L (f) (x) = \left( \int_0^\infty \int_{B(x,t^{1/2})} |\nabla e^{-tL} f(y)|^2 + V |e^{-tL} f(y)|^2 \frac{\mathrm{d}t \mathrm{d}y}{Vol(y,t^{1/2})} \right)^{1/2}.$$
For $p \in [2,+\infty)$ we show that it is sufficient to assume that the manifold has the volume doubling property whereas for $p \in (1,2)$ we need extra assumptions of $L^p-L^2$ of diagonal estimates for $\{ \sqrt{t} \nabla e^{-tL}, t\geq 0 \}$ and $ \{ \sqrt{t} \sqrt{V} e^{-tL} , t \geq 0\}$.
Given a bounded holomorphic function $F$ on some angular sector, we introduce the generalized conical vertical square functional
$$\G_L^F (f) (x) = \left( \int_0^\infty \int_{B(x,t^{1/2})} |\nabla F(tL) f(y)|^2 + V |F(tL) f(y)|^2 \frac{\mathrm{d}t \mathrm{d}y}{Vol(y,t^{1/2})} \right)^{1/2}$$ and prove its boundedness on $L^p$ if $F$ has sufficient decay at zero and infinity. 
We also consider conical square functions associated with the Poisson semigroup, lower bounds, and make a link with the Riesz transform.
\bigbreak
\noindent
{\bf Home institution:}    \\[3mm]
Institut de Math\'ematiques de Bordeaux \\ 
Universit\'e de Bordeaux, UMR 5251,  \\ 
351, Cours de la Lib\'eration  \\
33405 Talence. France.\\
Thomas.Cometx@math.u-bordeaux.fr
\bigbreak
\noindent{\bf Acknowledgments}: This research is partly supported by the ANR project RAGE "Analyse Réelle et Géométrie" (ANR-18-CE40-0012).
\clearpage

\section{Introduction}
In this paper, we study conical vertical square functionals in the framework of Riemmannian manifolds.
Let $M$ be a complete non compact Riemannian manifold. The Riemannian metric on $M$ induces a distance $d$ and a measure $\mu$. We denote by $\nabla$ the Levi-Civita connection or the gradient on functions. Let $L = \Delta + V$ be a Schrödinger operator with $V$ a function in $L^1_{loc}$. Except when specifically precised, $V$ is non-negative. The conical vertical square function associated with $L$ is defined by 
$$\G_L(f) (x) = \left( \int_0^\infty  \int_{B(x,t^{1/2})} |\nabla e^{-tL} f(y)|^2 +V |e^{-tL} f(y)|^2 \frac{\mathrm{d}t \mathrm{d}y}{Vol(y,t^{1/2})}\right)^{1/2}$$
where $B(x,t^{1/2})$ is the ball of center $x$ and radius $t^{1/2}$ and $Vol(x,t^{1/2})$ its volume. We consider the question of boundedness of $\G_L$ on $L^p(M)$. We also compare $\G_L$ with the vertical Littlewood-Paley-Stein functional 
$$H_L(f) (x) = \left( \int_0^\infty |\nabla e^{-tL}f(x)|^2 + V|e^{-tL} f(x)|^2 \mathrm{d}t \right)^{1/2}. $$
Both of these functionals were introduced in the 
Euclidean setting and $ L = \Delta$ by Stein in \cite{Stein-Article} where he proved their boundedness on $L^p$ for all $p \in (1,+\infty)$.
Similar functionals associated with divergence form operators $L = div(A \nabla .)$ on $\mathbb{R}^n$ have been considered by Auscher, Hofmann and Martell in \cite{Auscher}. They showed that 
$$\left\| \left(  \int_0^\infty  \int_{B(x,t^{1/2})} |\nabla e^{-tL} f(y)|^2  \frac{\mathrm{d}t \mathrm{d}y}{Vol(y,t^{1/2})} \right)^{1/2}\right\|_p \leq C \left\| f\right\|_p $$ for $p \in (p^-, \infty)$ where $p^- \leq 2$ is the infimum of $p$ such that $\{ \sqrt{t} \nabla e^{-tL} ,t  \geq 0 \}$ satisfies $L^p-L^2$ off-diagonal estimates. In particular, if $A$ is real then $p^- = 1$. Chen, Martell and Prisuelos-Arribas studied the case of degenerate elliptic operators in \cite{Chen-Martell}.
The vertical Littlewood-Paley-Stein was studied by Stein for the Laplace-Beltrami operator in \cite{Stein-Article,Stein-Book} where he prove the boundedness of $H_\Delta$ on $L^p$ for $p \in (1,2]$ without any assumption on the manifold, and for $p \in (2,\infty)$ in the case of compact Lie groups. In \cite{Coulhon-Duong-Li}, Coulhon, Duong and Li proved the weak type $(1,1)$ for $H_\Delta$ if the manifold satisfies the volume doubling property and $\Delta$ satifies a Gaussian upper estimate for its heat kernel. In \cite{Ouhabaz-LPS}, Ouhabaz proved that $H_L$ is always bounded on $L^p$ for $p \in (1,2]$ and is unbounded for $p$ large enough. Cometx studied the case of Schrödinger operators with signed potential in \cite{Cometx}.

\bigbreak
Concerning $\G_L$ in the Riemannian manifold setting, we show that the situation for $p \in (1, 2]$ and $p \in [2,+\infty)$ are different. If $p \in [2,+\infty)$, it is proved in \cite{Auscher} that the conical square functional is bounded in the $L^p$ norm by the vertical one. 
We prove that the conical square functional is bounded on $L^p$ for all $p \in [2,+\infty)$ provided the manifold satisfies the volume doubling property.

In contrast, the vertical Littlewood-Paley-Stein functional $H_L$ may be unbounded on $L^p$ for $p$ large enough (see \cite{Cometx-Ouhabaz}, Section 7). This shows that $H_L$ and $\G_L$ have different behaviours on $L^p$.
If $p \in (1,2]$, then $H_L$ is always bounded on $L^p$ for any complete Riemannian manifold. 

Following the proofs in \cite{Auscher} and \cite{Chen-Martell}, we show in the Riemannian manifold setting that $\G_L$ is bounded on $L^p$ provided $ \{ \sqrt{t}\nabla e^{-tL} \}$ and $ \{ \sqrt{t} \sqrt{V} e^{-tL} \}$  satisfy $L^p-L^2$ off-diagonal estimates. 
In particular, if in addition the heat kernel of $e^{-t\Delta}$ satisfies a Gaussian upper bound, then $\G_L$ is bounded on $L^p$ for all $p \in (1,+\infty)$.

We also introduce generalized conical square functions, inspired by the generalized Littlewood-Paley-Stein functionals in \cite{Cometx-Ouhabaz}, namely
$$\G_L^F (f)(x) = \left( \int_0^\infty \int_{B(x,t^{1/2})} |\nabla F(tL) f(y)|^2 + V|F(tL) f(y)|^2 \frac{\mathrm{d}t \mathrm{d}y}{Vol(y,t^{1/2})}\right)^{1/2},$$
for $F$ a bounded holomorphic function in some sector $\Sigma(\mu) = \{ z \neq 0 , |arg(z)| < \mu \}$ for a fixed $\mu \in (0,\pi/2)$. We assume that the manifold satisfies the volume doubling property and $F$ has sufficient decay at zero and at infinity, that is 
$$|F(z)| \leq C \frac{|z|^{\tau}}{1+|z|^{\tau + \delta}},$$ for $\delta > 1/2$ and $\tau > \frac{N-2}{4}$. Then $\G_L^F$ is bounded on $L^p$ for all $p \in [2,+\infty)$.

In addition to Schrödinger operators we also consider conical square functionals associated with the Hodge-de Rham Laplacian on $1$-differential forms. That is 
$$\vec{\G} (\omega) (x) = \left( \int_0^\infty \int_{B(x,t^{1/2})} |d^* e^{-t\LF} \omega(y)|^2 \frac{\mathrm{d}y \mathrm{d}t}{Vol(y,t^{1/2})} \right)^{1/2},$$
where $d^*$ is the adjoint of the exterior derivative $d$. We show again that if the manifold is doubling then $\vec{\G}$ is bounded on $L^p$ for all $p \in [2, \infty)$. This boundedness is rather suprizing since the semigroup $e^{-t\LF}$ may not be uniformly bounded on $L^p$ for $p \neq 2$ (see \cite{Chen-Magniez-Ouhabaz}). In the case $p<2$, then $\vec{\G}$ is bounded on $L^p$ under the assumptions that $M$ satisfies the volume doubling property and $\{ \sqrt{t} d^* e^{-t\LF} , t\ge 0\}$ satisfies $L^p-L^2$ off-diagonal estimates. 

We also consider  conical vertical square functions for Schrödinger operators with a potential $V$ which have a non-trivial negative part $V^-$ and also such functionals associated with the Poisson semigroup. In addition we give lower bounds and an application to the Riesz transform.
\bigbreak

\textbf{Notations.} Throughout this chapter, we denote by $p'  = \frac{p}{p-1}$ the dual exponent of $p \in [1, \infty]$. We denote by $C,C',c$ all inessential positive constants. Given a ball $B = B(x,r) \subset M$ and $\lambda > 0$, $\lambda B$ is the ball $B(x, \lambda r)$. For a ball $B$ and $j \geq 1$, $C_j(B)$ (or $C_j$) is the annulus $2^{j+1} B \backslash 2^j B$ and $C_0(B)$ is $B$.

 We recall that $M$ satisfies the volume doubling property if for all $x$ in $M$ and $r > 0$ one has 
\[
 Vol(x,2r) \leq C Vol(x,r) \tag{D}
\] for some constant $C >0$ independent of $r$ and $x$.
 This property self-improves in  
\[\label{chap5-doubling}
 Vol(x,\lambda r) \leq C \lambda^N Vol(x,r) \tag{D'}
\]
 for some constants $C$ and $N$ independent of $x, r$ and $\lambda \geq 1$.

The Hardy-Littlewood maximal operator $\M$ is defined by
$$\M(f)(x) = \sup_{r>0} \frac{1}{\mu(B(x,r))} \int_{B(x,r)} |f(y)| \mathrm{d}y.$$ 
Given $\mu \in (0,\pi)$, $\Sigma(\mu)$ is the angular sector $\{ z \neq 0 , |arg(z)|< \mu \}$ and $H^\infty(\Sigma(\mu))$ is the set of bounded holomorphic functions on $\Sigma(\mu)$.

\section{Conical square functionals}\label{chap5-section-conical}
As mentionned in the introduction, the conical vertical functional associated with the Laplace-Beltrami operator $\Delta$ is defined by
\begin{equation*}
\mathcal{G}_\Delta(f)(x) := \left(\int_0^\infty \int_{B(x,t^{1/2})} |\nabla e^{-t\Delta} f|^2 \dtent \right)^{1/2}.
\end{equation*}
The so-called conical horizontal square functional is defined by
\begin{equation*}
\mathcal{S}_\Delta(f)(x) := \left(\int_0^\infty \int_{B(x,t^{1/2})} \left|\frac{\partial }{\partial t} e^{-t\Delta} f\right|^2 t \dtent \right)^{1/2}.
\end{equation*} 
The functional $\mathcal{S}_\Delta$ is linked to the Hardy spaces $H^p_\Delta$. The space $H^p_\Delta$ is the completion of the set $\{f \in H_\Delta^2, \| \SSS_\Delta f \|_p < +\infty \}$ with respect to the norm $\| \SSS_\Delta f \|_p$. The norm on $H_\Delta^p$ is $\| f\|_{H_\Delta^p} = \| \SSS_\Delta f \|_p$. Here $H^2_\Delta$ is the closure of $R(\Delta)$ with respect to the $L^2$ norm. The boundedness of $\SSS_\Delta$ on $L^p$  is equivalent to the inclusion $L^p \subset H_\Delta^p$.
  The Hardy space is important in the study of singular integral operators such as the Riesz transform. We refer to \cite{Auscher-Russ-McIntosh,Chen-these,Devyver-Russ,Hofmann-Mayboroda-2011,Hofmann-Mayboroda-2009} for more on this topic.
  
\bigbreak
Similarly, for a Schrödinger operator $L = \Delta + V$ with $0 \leq V  \in L^1_{loc}$ we define
\begin{align}
\G_L(f)(x) &:= \left(\int_0^\infty \int_{B(x,t^{1/2})} |\nabla e^{-tL} f|^2 + V|e^{-tL} f|^2 \dtent \right)^{1/2}, \\
\SSS_L(f)(x) &:=  \left(\int_0^\infty \int_{B(x,t^{1/2})} \left|\frac{\partial}{\partial t} e^{-tL} f\right|^2 t \dtent \right)^{1/2}.
\end{align} 
For the Hodge-de Rham Laplacan $\vec{\Delta} = d d^* + d^* d$ on $1$-differential forms we define
\begin{align}
\vec{\mathcal{G}}(\omega)(x) &:= \left(\int_0^\infty \int_{B(x,t^{1/2})} |d^* e^{-t\vec{\Delta}} \omega|^2 \dtent \right)^{1/2},\\
\vec{\mathcal{S}}(\omega)(x) &:= \left(\int_0^\infty \int_{B(x,t^{1/2})} \left| \frac{\partial}{\partial t} e^{-t\vec{\Delta}} \omega\right|^2 t \dtent \right)^{1/2}. \end{align}
Note that here we may also consider variants where one replaces $d^*$ by  the exterior derivative $d$ or by the Levi-Civita connection $\nabla$.

As in the case of the Laplace-Beltrami operator $\Delta$ on functions, one can define the Hardy spaces $H^p_L$ and $H_{\LF}^p$ throught $\SSS_L$ and $\vec{\SSS}$. See again \cite{Auscher-Russ-McIntosh,Chen-these,Devyver-Russ,Hofmann-Mayboroda-2011,Hofmann-Mayboroda-2009}.

We note that $\SSS_L$ is a particular case of square functions
$$\SSS_\phi(f)(x) := \left(\int_0^\infty \int_{B(x,t^{1/2}))} |\phi(tL) f|^2 \frac{\mathrm{d}y \mathrm{d}t}{tVol(y,t^{1/2})} \right)^{1/2},$$ where $\phi$ is a bounded holomorphic function on some angular sector . These ones are comparable with horizontal square functions associated to $L$ (see Proposition \ref{chap5-prop2-3}).

Following \cite{Auscher}, we define 
\begin{equation}\label{chap5-area-function}
A(F)(x) := \left( \int_0^\infty \int_{B(x,t)} |F(y,t)|^2 \frac{\mathrm{d}y \mathrm{d}t}{t Vol(y,t)} \right)^{1/2}
\end{equation}
and \begin{equation}\label{chap5-vertical-function} 
\tilde{V}(F)(x) := \left( \int_0^\infty  |F(y,t)|^2  \frac{\mathrm{d}t}{t} \right)^{1/2}.
\end{equation}
for any function $F$ which is locally square integrable on $M^+ := M \times \mathbb{R}_+$.
The functions $\tilde{V}(F)$ and $A(F)$ are measurable on $M$ and they are comparable in the following sense.

\begin{proposition}[\cite{Auscher}, Proposition 2.1]  \label{chap5-comparaison} Assume that $M$ satisfies the doubling volume property \eqref{chap5-doubling}. For every $F$ in $L^2_{loc}(M^+)$ we have
\begin{enumerate}\item For $p \in [2,+\infty)$,
$\|A (F)\|_p \leq C \|\tilde{V}(F)\|_p $.
\item For $p \in (0,2] $,
$\| \tilde{V} (F)\|_p \leq C \|A (F)\|_p$.
\end{enumerate}
\end{proposition}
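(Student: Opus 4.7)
I would first observe that a single application of Fubini gives $\|A(F)\|_2 = \|\tilde V(F)\|_2$: swapping the $x$ and $y$ integrations in $\int_M A(F)^2$ turns the indicator $\mathbf{1}_{d(x,y)<t}$ into the full ball $B(y,t)$, cancelling the weight $Vol(y,t)^{-1}$ and leaving $\int_0^\infty\int_M |F(y,t)|^2\,dy\,\frac{dt}{t}$. Both directions in the proposition are thus sharp and equalities at $p=2$.

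\textbf{Case $p\in[2,\infty)$.} My plan is to dualise on the level of squares. For any non-negative $\phi$ with $\|\phi\|_{(p/2)'}\le 1$, Fubini rewrites
$$
\int_M A(F)^2\,\phi \;=\; \int_0^\infty\!\int_M |F(y,t)|^2\Bigl(\frac{1}{Vol(y,t)}\!\int_{B(y,t)}\!\phi\Bigr)\,dy\,\frac{dt}{t}
\;\le\; \int_M \tilde V(F)^2\,\M\phi,
$$
where the inner average is dominated pointwise by $\M\phi(y)$. Hölder together with the $L^{(p/2)'}$-boundedness of $\M$ on the doubling space $M$ (valid since $(p/2)'>1$ for $p>2$, and trivially at $p=2$) closes the estimate, giving $\|A(F)\|_p\le C\|\tilde V(F)\|_p$.

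\textbf{Case $p\in(0,2]$.} Here the dual strategy collapses: $\M$ is not bounded on $L^q$ for $q\le 1$, and more fundamentally one cannot lower-bound the averages $Vol(y,t)^{-1}\int_{B(y,t)}\phi$ pointwise. I would instead invoke the atomic decomposition of the tent space $T^p_2(M^+)$, available on any doubling Riemannian manifold in its capacity as a space of homogeneous type: every $F$ with $\|A(F)\|_p<\infty$ can be written $F=\sum_j\lambda_j a_j$ where each $a_j$ is supported in a tent $\widehat{B_j}$ over a ball $B_j\subset M$ with the size bound $\|a_j\|_{L^2(M^+,dy\,dt/t)}\le Vol(B_j)^{1/2-1/p}$, and $\sum_j|\lambda_j|^p\le C\|A(F)\|_p^p$. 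For each atom, $\tilde V(a_j)$ is supported in $B_j$, so a single Hölder inequality gives
$$
\|\tilde V(a_j)\|_p^p \;\le\; \|\tilde V(a_j)\|_2^p\,Vol(B_j)^{1-p/2} \;=\; \|a_j\|_{L^2(dy\,dt/t)}^p Vol(B_j)^{1-p/2} \;\le\; 1.
$$
For $p\le 1$, $p$-subadditivity of $\|\cdot\|_p^p$ closes the sum; for $1<p<2$, I would interpolate the resulting $p=1$ inequality with the $p=2$ identity, viewing $F\mapsto\tilde V(F)$ as a sublinear operator from $T^p_2$ to $L^p$.

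\textbf{Main obstacle.} The genuine difficulty is the direction $p<2$. The $p>2$ half is algebraic: averaging and $L^q$-boundedness of $\M$ mesh perfectly. The converse truly requires geometric-combinatorial input, namely a Whitney-type decomposition of the open set $\{A(F)>\lambda\}$, which is the engine behind the atomic decomposition of $T^p_2$. The role of the doubling hypothesis is precisely to license this Calderón-Zygmund-style decomposition on $M$; with that in hand, estimation on a single atom plus interpolation yields the claim.
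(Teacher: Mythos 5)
The paper offers no proof of this proposition at all --- it is imported verbatim from \cite{Auscher} (Proposition 2.1) --- so there is no internal argument to compare yours against; I assess it on its own terms. Your $p=2$ Fubini identity and your $p\in[2,\infty)$ half are correct: the latter is exactly the classical Fefferman--Stein duality argument (dualise $A(F)^2$ against $\phi$ with $\|\phi\|_{(p/2)'}\le 1$, dominate the ball average by $\M\phi(y)$, then use Hölder and the $L^{(p/2)'}$-boundedness of $\M$, which is where \eqref{chap5-doubling} enters), and this is the same mechanism the present paper invokes elsewhere for its $p\ge 2$ results. Your $p\in(0,2]$ half is a legitimate but heavier route: instead of a direct distributional or averaging argument, you pass through the atomic decomposition of $T_2^p$ for $p\le 1$ (valid on doubling spaces, by Russ's theorem), the single-atom bound $\|\tilde V(a_j)\|_p\le 1$ (your Hölder computation on $B_j$ is right, and $\tilde V(a_j)$ is indeed supported in $B_j$ because the tent $\widehat{B_j}$ projects into $B_j$), and interpolation for $1<p<2$. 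Two points need tightening to make this airtight. First, for $p\le 1$ you must know that $F=\sum_j\lambda_j a_j$ converges pointwise a.e.\ (or that the atoms have essentially disjoint supports, as the Whitney-type construction in fact provides) in order to write $\tilde V(F)\le\sum_j|\lambda_j|\tilde V(a_j)$ before applying $p$-subadditivity. Second, complex interpolation does not apply to a sublinear operator as such; you should linearize by observing that $\|\tilde V(F)\|_{L^p}=\|F\|_{L^p(M;L^2(\mathbb{R}_+,\mathrm{d}t/t))}$, so that the \emph{identity} map is a linear operator bounded from $T_2^1$ to $L^1(M;L^2(\mathrm{d}t/t))$ and from $T_2^2$ to $L^2(M;L^2(\mathrm{d}t/t))$, and then Proposition \ref{chap5-interpolation} together with $[L^{1}(X),L^{2}(X)]_\theta=L^p(X)$ for the Hilbert space $X=L^2(\mathrm{d}t/t)$ closes the range $1<p<2$. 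With these two repairs your proof is complete; what it buys is a self-contained derivation from the tent-space atomic theory, at the cost of invoking machinery considerably stronger than what the inequality itself requires.
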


\begin{remark}
In \cite{Auscher}, counter-examples for the reverse inequalities are given.
\end{remark}

Recall the vertical Littlewood-Paley-Stein functional is 
$$H_L (f)(x) = \left( \int_0^\infty |\nabla e^{-tL} f|^2 + V |e^{-tL} f|^2 \mathrm{d}t\right)^{1/2}.$$
As a corollary of Proposition \ref{chap5-comparaison} we have.
\begin{proposition} \label{chap5-prop2-3}
\begin{enumerate}
\item For $p \in [2,+\infty)$, $$ \|\G_L (f)\|_p \leq C \left\| H_L(f) \right\|_p.$$
\item Let $p \in  [2,+\infty)$ and $\phi$ be a bounded holomorphic function on the angular sector $\Sigma(\theta) := \{z \neq 0, |arg(z)| < \theta \}$ with $\theta \in (\arcsin \left| \frac{2}{p}-1\right|,\pi/2)$ such that $|\phi(z)| \leq C \frac{|z|^\alpha}{1 + |z|^{2\alpha}}$ for some $\alpha >0$ and all $z \in \Sigma(\theta)$. Then $\|\SSS_\phi f\|_p \leq C \|f \|_p$.
\end{enumerate}
\end{proposition}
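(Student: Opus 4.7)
The plan is to deduce both statements directly from Proposition \ref{chap5-comparaison}(1) by applying it to a suitably chosen function $F$ on $M^+$, after the change of variable $t = s^2$ which reconciles the $t^{1/2}$-scaling in the conical functionals $\G_L$ and $\SSS_\phi$ with the $t$-scaling used in the definitions \eqref{chap5-area-function} and \eqref{chap5-vertical-function} of $A$ and $\tilde V$.

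For part (1), the natural choice is
$$ F(y,s) := \sqrt{2}\, s\, \bigl( |\nabla e^{-s^2 L} f(y)|^2 + V(y) |e^{-s^2 L} f(y)|^2 \bigr)^{1/2}. $$
Substituting $u = s^2$ inside the two defining integrals, a direct computation yields the pointwise identities $A(F)(x) = \G_L(f)(x)$ and $\tilde V(F)(x) = H_L(f)(x)$, and Proposition \ref{chap5-comparaison}(1) then gives the inequality for every $p \in [2,\infty)$.

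For part (2), I would apply the same substitution to $F_\phi(y,s) := \phi(s^2 L) f(y)$. Short calculations give $\SSS_\phi(f)(x) = \sqrt{2}\, A(F_\phi)(x)$ pointwise together with
$$ \tilde V(F_\phi)(x)^2 = \frac{1}{2} \int_0^\infty |\phi(sL)f(x)|^2 \frac{ds}{s}. $$
Proposition \ref{chap5-comparaison}(1) thus reduces the claim to the horizontal quadratic estimate
$$ \left\| \left( \int_0^\infty |\phi(sL)f|^2 \frac{ds}{s} \right)^{1/2} \right\|_p \leq C \|f\|_p. $$

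This last estimate is the only genuine input from outside Proposition \ref{chap5-comparaison}, and is where both the angle condition on $\theta$ and the decay of $\phi$ enter. Since $V \geq 0$, pointwise domination by the heat semigroup $e^{-t\Delta}$ makes $e^{-tL}$ a symmetric sub-Markovian semigroup on $L^p$, so standard $H^\infty$-calculus theory endows $L$ with a bounded $H^\infty(\Sigma(\theta'))$-calculus on $L^p$ for every $\theta' > \arcsin\left| \frac{2}{p}-1\right|$. The assumption $\theta > \arcsin\left|\frac{2}{p}-1\right|$ matches this range exactly, and the polynomial decay $|\phi(z)| \leq C|z|^\alpha/(1+|z|^{2\alpha})$ places $\phi$ in the class for which the usual McIntosh-type quadratic estimate is an automatic consequence. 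The main point requiring care is precisely this matching of angles with what the sub-Markovian $H^\infty$-calculus supplies; the rest is routine bookkeeping of constants in the change of variable.
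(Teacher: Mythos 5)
Your proposal is correct and follows essentially the same route as the paper: part (1) is Proposition \ref{chap5-comparaison}(1) applied to $F(y,s)=\bigl(|s\nabla e^{-s^2L}f(y)|^2+V|se^{-s^2L}f(y)|^2\bigr)^{1/2}$ after the substitution $t=s^2$, and part (2) combines the same comparison with the McIntosh square-function estimate coming from the bounded $H^\infty(\Sigma(\mu_p))$-calculus of the sub-Markovian generator $L$ on $L^p$, with $\mu_p=\arcsin|\tfrac{2}{p}-1|+\epsilon$ exactly as you identify. Your write-up is in fact slightly more careful than the paper's about the change of variable and the placement of the factor $\sqrt{2}$.
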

\begin{proof}
The first item is an immediate consequence of Proposition \ref{chap5-comparaison} with $F(x,t) = |t \nabla e^{-t^2 \Delta} f|$. 
For the second one, using again Proposition \ref{chap5-comparaison} we obtain 
\begin{equation*}
\| \SSS_\phi (f) \|_p \leq C \left\| \left( \int_0^\infty |\phi(tL) f |^2 \frac{\mathrm{d}t}{t} \right)^{1/2} \right\|_p.
\end{equation*}
Since $L$ is the generator of a sub-Markovian, it has a bounded holomorphic functional calculus on $L^p$ for all $p \in (1, \infty)$. This was proved by many authors and the result had successive improvements during several decades. The most recent and general result in this direction states that $L$ has a bounded holomorphic functional calculus with angle $ \mu_p = \arcsin(|\frac{2}{p} - 1| ) + \epsilon$ for all $\epsilon > 0$. We refer to \cite{Carbonaro} for the precise statement. The existence of a bounded holomorphic functional calculus implies the so-called square functions estimates, that is for all $F \in H_0^\infty (\Sigma({\mu_p})) = \{F \in H^\infty (\Sigma({\mu_p})), |F(z)| \leq C \frac{|z|^\alpha}{1+|z|^{2\alpha}}$ for some $\alpha > 0$ and all $z$ in $\Sigma({\mu_p} ) \}$, one has for all $f$ in $L^p(M),$
$$\left\| \left( \int_0^\infty |F(tL)f(x)|^2 \frac{\mathrm{d}t}{t}\right)^{1/2} \right\|_p \leq C \| f\|_p. $$ See \cite{CDMY} for more on the link between square functions estimates and bounded holomorphic functional calculus. The square functions estimate with $F = \phi$ finishes the proof.
\end{proof}
\begin{remark}
The first item of the last proposition shows that if the Littlewood-Paley-Stein functional $H_L$ is bounded on $L^p$, then $\G_L$ is also bounded on $L^p$. Note that $H_L$ is bounded on $L^p$ for some $p \in [2,\infty)$ if and only if the sets $\{ \sqrt{t} \sqrt{V} e^{-tL} \}$ and $\{ \sqrt{t} \nabla e^{-tL} \}$ are $R$-bounded on $L^p$ (see \cite{Cometx-Ouhabaz}, Theorem 3.1).

\end{remark}
A natural choice for $\phi$ is $\phi_0(z) = z^{1/2} e^{-z}$ so that
\begin{equation}\label{chap5-S12}
\SSS_{\phi_0}(f)(x) := \left( \int_0^\infty \int_{B(x,t^{1/2})} | \Delta^{1/2} e^{-t\Delta} f|^2 \frac{\mathrm{d}y \mathrm{d}y}{Vol(y,t)}\right)^{1/2}.
\end{equation} 
We shall use this functional in Section \ref{chap5-section-riesz} in connection with the Riesz transform. We make the following observation.

\begin{proposition}
\begin{enumerate}
\item For $p \in [2, \infty)$, $\SSS_{\phi_0}$ is bounded on $L^p$,
\item For $p \in (1, 2]$, there exists $C>0$ such that for all $f \in L^{p}$, 
$$ \|f \|_{p} \leq C \| S_{\phi_0} (f)\|_{p}. $$ 
\end{enumerate}
\end{proposition}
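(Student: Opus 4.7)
For the first item, the plan is to invoke the second item of Proposition \ref{chap5-prop2-3} applied to $\phi_0(z) = z^{1/2}e^{-z}$. On any subsector $\Sigma(\theta)$ with $\theta \in (0,\pi/2)$ one has $|\phi_0(z)| \leq |z|^{1/2} e^{-(\cos\theta)|z|}$, and since $(1+|z|)e^{-(\cos\theta)|z|}$ is bounded on $[0,\infty)$ this yields $|\phi_0(z)| \leq C|z|^{1/2}/(1+|z|)$, matching the hypothesis $|\phi(z)| \leq C|z|^\alpha/(1+|z|^{2\alpha})$ with $\alpha = 1/2$. For $p \in [2,\infty)$ the condition $\theta > \arcsin|2/p-1|$ still leaves room to choose $\theta < \pi/2$, so Proposition \ref{chap5-prop2-3} immediately delivers $\|\mathcal{S}_{\phi_0} f\|_p \leq C\|f\|_p$.

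The second item is a reverse inequality and I would obtain it by duality together with a Calderón reproducing formula. Since $\int_0^\infty \phi_0(s)^2 \frac{ds}{s} = \int_0^\infty e^{-2s}\,ds = \tfrac{1}{2}$, the spectral theorem gives $f = 2\int_0^\infty \phi_0(tL)^2 f\,\frac{dt}{t}$ on the closure of $R(L)$ in $L^2$, and consequently, for $f,g$ in suitable dense subspaces,
$$\langle f,g\rangle = 2\int_0^\infty\int_M \phi_0(tL)f(y)\,\overline{\phi_0(tL)g(y)}\,\frac{dy\,dt}{t}.$$
Applying Cauchy-Schwarz in $t$ pointwise in $y$, followed by Hölder's inequality in $y$, would bound the right-hand side by $2\|P(f)\|_p \|P(g)\|_{p'}$, where $P(h)(x) := \left(\int_0^\infty |\phi_0(tL)h(x)|^2 \frac{dt}{t}\right)^{1/2}$ is the standard horizontal square function associated with $\phi_0$.

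It then remains to control each of these two factors. Since $p' \in [2,\infty)$, the estimate $\|P(g)\|_{p'} \leq C\|g\|_{p'}$ follows from the square function estimates attached to the bounded holomorphic functional calculus of $L$ on $L^{p'}$, precisely as used in the proof of Proposition \ref{chap5-prop2-3}. For the factor $\|P(f)\|_p$ I would set $F(y,t) := \phi_0(t^2 L)f(y)$; the change of variable $s = t^2$ identifies, up to an absolute multiplicative constant, $\mathcal{S}_{\phi_0}(f) = \sqrt{2}\,A(F)$ and $P(f) = \sqrt{2}\,\tilde V(F)$. Since $p \in (1,2]$, item 2 of Proposition \ref{chap5-comparaison} gives $\|\tilde V(F)\|_p \leq C\|A(F)\|_p$, hence $\|P(f)\|_p \leq C\|\mathcal{S}_{\phi_0}(f)\|_p$. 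Combining the estimates and taking the supremum over $g$ with $\|g\|_{p'}=1$ yields $\|f\|_p \leq C\|\mathcal{S}_{\phi_0}(f)\|_p$ on a dense subclass, from which the full statement follows by density.

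The main obstacle, as I see it, is not any single inequality but rather the functional-analytic justification of the reproducing formula inside the $L^p$-$L^{p'}$ pairing: one needs absolute convergence of the double integral expressing $\langle f,g\rangle$ together with the density of $R(L)\cap L^2\cap L^p$ in $L^p$ so that the $L^2$ spectral identity can be brought to bear. Once these points are settled, the argument is a short chain of Cauchy-Schwarz, Hölder, Proposition \ref{chap5-comparaison}, and the horizontal square-function estimate coming from the functional calculus of $L$.
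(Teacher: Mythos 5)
Your treatment of item 1 coincides with the paper's: both reduce to Proposition \ref{chap5-prop2-3}(2), after checking that $\phi_0(z)=z^{1/2}e^{-z}$ obeys $|\phi_0(z)|\le C|z|^{1/2}/(1+|z|)$ on a subsector $\Sigma(\theta)$ with $\theta\in(\arcsin|2/p-1|,\pi/2)$.

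For item 2 your argument is correct but takes a slightly different route through the same duality scheme. Both proofs start from the polarized reproducing identity $\int_M f\,\overline{g}\,\mathrm{d}x = 2\int_0^\infty\int_M \Delta^{1/2}e^{-t\Delta}f\cdot\overline{\Delta^{1/2}e^{-t\Delta}g}\,\mathrm{d}x\,\mathrm{d}t$ (you obtain it from the Calder\'on formula $\int_0^\infty\phi_0(s)^2\,\mathrm{d}s/s=1/2$, the paper by integrating $-\tfrac{\partial}{\partial t}\bigl(e^{-t\Delta}f\,e^{-t\Delta}g\bigr)$). The paper then inserts the ball average $\frac{1}{Vol(x,t^{1/2})}\int_{B(x,t^{1/2})}\mathrm{d}y$, swaps the integrals, and applies Cauchy-Schwarz with respect to the tent measure, so that \emph{both} factors become conical: $|\langle f,g\rangle|\le 2\|\SSS_{\phi_0}(f)\|_p\|\SSS_{\phi_0}(g)\|_{p'}$, and it concludes by item 1 applied to $g$. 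You instead apply Cauchy-Schwarz in $t$ pointwise, obtaining the vertical square functions $P(f)$ and $P(g)$; you bound $\|P(g)\|_{p'}$ directly by the $H^\infty$-calculus square-function estimate and convert $\|P(f)\|_p$ into $\|\SSS_{\phi_0}(f)\|_p$ via Proposition \ref{chap5-comparaison}(2), legitimate since $p\le 2$ (and using doubling, as does the paper's route through item 1). The two arguments consume the same ingredients --- one direction of the $A$ versus $\tilde V$ comparison plus the square-function estimate from the functional calculus --- merely deployed on opposite factors of the pairing; yours makes explicit which direction of Proposition \ref{chap5-comparaison} is used and where, while the paper's keeps the estimate entirely within the conical functionals and reuses item 1 verbatim. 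Both share the implicit caveat, which you rightly flag, that the reproducing identity holds on $\overline{R(\Delta)}\cap L^2$ and must be extended by density.
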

\begin{proof}
The first item follows from Proposition \ref{chap5-prop2-3}.
For the second, fix $p \in (1,2]$, then $p'\in [2, \infty)$.
For all $f$ in $L^{p}$ and $g \in L^{p'}$ one has 
\begin{align*}
\left| \int_M f(x) g(x) \mathrm{d}x \right| &= \left|\int_M \int_0^\infty -\frac{\partial}{\partial t}( e^{-t\Delta} f e^{-t\Delta} g) \mathrm{d}t \mathrm{d}x  \right|\\
&= \left|\int_M \int_0^\infty  \left[ \Delta e^{-t\Delta}f e^{-t\Delta} g + e^{-t\Delta}f \Delta e^{-t\Delta} g \right] \mathrm{d}t \mathrm{d}x \right|\\
&=  2  \left|\int_M \int_0^\infty \Delta^{1/2} e^{-t\Delta} f . \Delta^{1/2} e^{-t\Delta} g \mathrm{d}t \mathrm{d}x\right| \\
&=  2 \left| \int_M \int_0^\infty \int_{y \in B(x,t^{1/2})} \Delta^{1/2} e^{-t\Delta} f . \Delta^{1/2} e^{-t\Delta} g \mathrm{d}t \mathrm{d}x \frac{\mathrm{d}y}{Vol(x,t^{1/2})}\right| \\
&= 2 \left| \int_M \left( \int_0^\infty \int_{x \in B(y,t^{1/2})}  \Delta^{1/2} e^{-t\Delta} f . \Delta^{1/2} e^{-t\Delta} g \mathrm{d}t \frac{\mathrm{d}x}{Vol(x,t^{1/2})} \right) \mathrm{d}y \right| \\
&\leq 2 \left| \int_M \SSS_{\phi_0}(f)(y) \SSS_{\phi_0}(g)(y) \mathrm{d}y \right| \\
& \leq 2 \| \SSS_{\phi_0}(g)\|_{p'} \|\SSS_{\phi_0}(f)\|_{p} \\
& \leq 2 \| g \|_{p'}  \|\SSS_{\phi_0}(f)\|_{p}.
\end{align*}
Here the two first inequalities respectively come from Cauchy-Schwarz with measure $\frac{\mathrm{d}t \mathrm{d}x}{Vol(x,t^{1/2})}$ and Hölder with exponents $p$ and $p'$. The last inequalities comes from the first item. We obtain the result by taking the supremum over $f$ in $L^p$.\end{proof}

\section{Tent spaces and off-diagonal $L^p-L^2$ estimates}\label{chap5-section-tentspaces}
In this short section, we recall the definition of tent spaces on manifolds some properties they satisfy.
For any $p \in [1,+\infty)$, the tent space $T_2^p$ is the space of square locally integrable functions on $M^+$  such that $$A(F) := \left( \int_0^\infty \int_{B(x,t)} |F(x,t)|^2 \frac{\mathrm{d}x \mathrm{d}t}{Vol(x,t)} \right)^{1/2} \in L^p(M).$$  Its norm is given by 
$$\| F\|_{T_2^p} = \| A(F) \|_p. $$

For $p = + \infty$, $T_2^\infty$ is the set of locally square integrable functions on $M^+$ such that
\begin{equation*}
\|F\|_{T_2^\infty} := \left( \sup_B \int_0^{r_B} \int_B |F(y,t)|^2 \frac{\mathrm{d}y \mathrm{d}t}{Vol(y,t)} \right)^{1/2} < + \infty.
\end{equation*} Here the supremum is taken on all balls $B$ in $M$ and $r_B$ is the radius of $B$. 

Tent spaces form a complex interpolation family and are dual of each other. Theses results remain true for tent spaces on mesured metric spaces with doubling volume property. In particular it is true for tent spaces of differential forms. We refer to  \cite{Chen-these} or \cite{Auscher-Russ-McIntosh} for proofs and more information. Precisely,

\begin{proposition}\label{chap5-interpolation}
Suppose $1 \leq p_0 < p < p_1 \leq \infty$, with $\frac{1}{p} = \frac{1-\theta}{p_0} + \frac{\theta}{p_1}$ for some $\theta \in (0,1)$. Therefore $[T_2^{p_0}, T_2^{p_1} ]_\theta = T_2^p.$
\end{proposition}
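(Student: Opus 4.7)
The plan is to follow the classical Coifman--Meyer--Stein proof of tent-space interpolation, adapted to the doubling metric setting as in \cite{Chen-these} and \cite{Auscher-Russ-McIntosh}. The argument rests on three pillars. First, the endpoint $p=2$ is essentially trivial: by Fubini together with the identity $\int_M \mathbf{1}_{B(x,t)}(y)\,d\mu(x) = Vol(y,t)$, the space $T_2^2$ is isometric to a weighted $L^2$ space on $M \times \mathbb{R}_+$ and hence Hilbert. Second, one establishes the duality $(T_2^p)^* = T_2^{p'}$ for $1 \le p < \infty$ through the natural pairing $\langle F, G \rangle = \int_{M^+} F \bar G \,\frac{d\mu\,dt}{t}$. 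Third, one proves an atomic decomposition for $T_2^1$.

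For the duality, the range $1 < p < \infty$ reduces to Hölder's inequality together with Cauchy-Schwarz applied inside the area integrals defining $A$. The genuinely subtle endpoint is $p = 1$, where the dual is identified with $T_2^\infty$ via the Carleson-measure characterization; the proof requires a Whitney-type decomposition of the tent over an open set --- concretely, over the super-level sets $\{A(F) > 2^k\}$ --- and uses the doubling property \eqref{chap5-doubling} in an essential way. For the atomic decomposition, one shows that any $F \in T_2^1$ can be written as $F = \sum_k \lambda_k a_k$, with each $a_k$ supported in a tent over a ball $B_k$, satisfying $\|a_k\|_{T_2^2} \le Vol(B_k)^{-1/2}$, and with $\sum_k |\lambda_k| \lesssim \|F\|_{T_2^1}$; this is obtained by a classical stopping-time construction on the same super-level sets of $A(F)$.

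Combining these ingredients, the complex interpolation identity $[T_2^1, T_2^\infty]_\theta = T_2^{1/(1-\theta)}$ follows by testing linear maps on atoms and invoking the duality, and the general identity $[T_2^{p_0}, T_2^{p_1}]_\theta = T_2^p$ in the statement is then recovered from Wolff's reiteration theorem applied with the Hilbertian base point $T_2^2$. I expect the main obstacle to be the $T_2^1$--$T_2^\infty$ duality together with the atomic decomposition: both rest on Whitney covers and stopping-time arguments whose geometric content uses doubling in an essential way. Since all of these are already carried out for doubling metric measure spaces in the cited references, the proposition follows.
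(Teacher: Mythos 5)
The paper gives no proof of this proposition at all: it is quoted as a known result with a pointer to \cite{Chen-these} and \cite{Auscher-Russ-McIntosh}, so there is nothing in the text to compare your argument against line by line. Your outline is precisely the Coifman--Meyer--Stein scheme that those references adapt to doubling metric measure spaces --- the Hilbertian identification of $T_2^2$ via Fubini and $\int_M \mathbf{1}_{B(x,t)}(y)\,\mathrm{d}\mu(x)=Vol(y,t)$, the duality $(T_2^p)^*=T_2^{p'}$, the atomic decomposition of $T_2^1$ by stopping times on the level sets of $A(F)$ --- so in substance you are reconstructing the proof that the paper delegates to its citations.

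Two cautions, should you want this self-contained rather than cited. First, H\"older plus Cauchy--Schwarz only gives the embedding $T_2^{p'}\hookrightarrow (T_2^p)^*$; the converse, that every bounded functional is represented by a $T_2^{p'}$ element, needs the $L^2$ theory on truncated cones together with a limiting argument, and it is this direction that the interpolation actually consumes. Second, the sentence ``the identity $[T_2^1,T_2^\infty]_\theta=T_2^{1/(1-\theta)}$ follows by testing linear maps on atoms and invoking the duality'' is the weak point: testing on atoms proves boundedness of operators out of $T_2^1$, not equality of complex interpolation spaces, and the endpoint $p_1=\infty$ is exactly where the original Coifman--Meyer--Stein argument was incomplete and had to be repaired later (Bernal; see also Amenta's account in the metric-measure setting). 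The robust order of operations is to prove $[T_2^{p_0},T_2^{p_1}]_\theta=T_2^p$ for $p_1<\infty$ directly (three-lines arguments on the area functional), and only then bring in the $T_2^1$--$T_2^\infty$ duality and reiteration to reach $p_1=\infty$. None of this is a gap relative to the paper, which proves nothing here, but it is where a blind write-up would most likely go wrong.
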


\begin{proposition}
Let $p$ be in $(1,+\infty)$ and $p'$ be its dual exponent. Then $T^{p'}_2$ is identified as the dual of $T_2^p$ with the pairing $<F,G> = \int_{M \times (0,+\infty)} F(x,t) G(x,t) \frac{\mathrm{d}x \mathrm{d}t}{t}.$
\end{proposition}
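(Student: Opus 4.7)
The proposition asserts the tent-space duality $(T_2^p)^* = T_2^{p'}$ for $p\in(1,\infty)$. This is a classical theorem of Coifman--Meyer--Stein in the Euclidean setting; on a doubling metric measure space it extends by the same scheme, so the plan is to recycle that two-step argument and verify that each step uses only the symmetry of the distance and Fubini.

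\textbf{Step 1 (embedding $T_2^{p'}\hookrightarrow (T_2^p)^*$).} First I would show that for $F\in T_2^p$ and $G\in T_2^{p'}$ the pairing $\langle F,G\rangle$ is absolutely convergent with $|\langle F,G\rangle|\le C\|F\|_{T_2^p}\|G\|_{T_2^{p'}}$. The trick is the Fubini identity
\[
\int_{M^+} F(y,t)G(y,t)\,\frac{\mathrm{d}y\,\mathrm{d}t}{t}
= \int_M\!\int_0^\infty\!\int_{B(x,t)} F(y,t)G(y,t)\,\frac{\mathrm{d}y\,\mathrm{d}t}{t\,Vol(y,t)}\,\mathrm{d}x,
\]
obtained by inserting $1=\frac{1}{Vol(y,t)}\int_M \mathbf{1}_{B(y,t)}(x)\,\mathrm{d}x$ and using $\mathbf{1}_{B(y,t)}(x)=\mathbf{1}_{B(x,t)}(y)$. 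Applying Cauchy--Schwarz on the inner integral and then Hölder in $x$ bounds the right-hand side by $\|A(F)\|_p\|A(G)\|_{p'}$, which is $\|F\|_{T_2^p}\|G\|_{T_2^{p'}}$. This shows that every $G\in T_2^{p'}$ defines a continuous functional on $T_2^p$ of norm at most $C\|G\|_{T_2^{p'}}$, and by testing against suitable $F$ concentrated on narrow tents one checks that this map is injective and that the norms are equivalent.

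\textbf{Step 2 (surjectivity).} The delicate direction is to show that every $\Lambda\in (T_2^p)^*$ arises from such a $G$. The plan is to observe that $T_2^2$ coincides isometrically with $L^2(M^+,\frac{\mathrm{d}y\,\mathrm{d}t}{t})$ via the same Fubini identity applied with $F=G$, so $T_2^p\cap T_2^2$ is dense in $T_2^p$ (a standard truncation of $F$ to compact subsets of $M^+$ away from $t=0$ and $t=\infty$ belongs to both spaces and approximates $F$ in the $T_2^p$ norm by dominated convergence for the area integral). Restricting $\Lambda$ to this dense subspace and invoking $L^2$ Riesz representation yields a measurable $G$ on $M^+$ with $\Lambda(F)=\int FG\,\frac{\mathrm{d}y\,\mathrm{d}t}{t}$ on the dense subspace. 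It remains to show $\|A(G)\|_{p'}\le\|\Lambda\|$; this is done by duality on $L^p(M)$: for a nonnegative $h\in L^p$ with $\|h\|_p\le1$, one constructs a test $F_h\in T_2^p$ (essentially $F_h=G\cdot\chi$ for a stopping-time cut-off driven by $h$) such that $\int F_h G\,\frac{\mathrm{d}y\,\mathrm{d}t}{t}=\int h\,A(G)^2\,\mathrm{d}x$ and $\|F_h\|_{T_2^p}\le C\|A(G)\|_{p'}^{p'/p}$, then takes the supremum over $h$.

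\textbf{Main obstacle.} Step~1 is essentially a one-line Fubini computation once the doubling structure is absorbed into the definitions, and poses no difficulty. The real work is in Step~2, specifically in producing the test function $F_h$: on a general doubling manifold this requires the Whitney/tent decomposition and a stopping-time argument adapted to balls rather than dyadic cubes. Rather than re-prove these constructions, I would invoke the references cited just above the statement (\cite{Auscher-Russ-McIntosh,Chen-these}) where the full machinery of tent spaces on doubling metric measure spaces is developed, and simply verify that the pairing written in the proposition is the one used there.
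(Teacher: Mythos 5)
The paper offers no proof of this proposition: it is stated as quoted from the literature, with the duality (and interpolation) of tent spaces on doubling metric measure spaces referred to \cite{Auscher-Russ-McIntosh} and \cite{Chen-these}. Your outline is a correct sketch of the Coifman--Meyer--Stein argument as adapted in those references --- the averaging/Fubini identity plus Cauchy--Schwarz and H\"older for the embedding, and $L^2$ Riesz representation on a dense subspace followed by a stopping-time test function for surjectivity --- and since you ultimately defer the delicate tent/stopping-time construction to the very same sources the paper cites, your route coincides with the paper's.
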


We shall use Proposition \ref{chap5-interpolation} to prove the boundedness of the conical square functions on $L^p$. Actually, the boundedness on $L^p$ of $\G_L$ canonically reformulates as the boundedness of $f \mapsto   t \nabla e^{-t^2 L} f $  and $ f \mapsto t \sqrt{V} e^{-t^2 L}f$ from $L^p$ to $T_2^p$. For $p \in [2,+\infty)$ the strategy is \begin{enumerate}
\item Prove that $\G_L$ is bounded on $L^2$,
\item Prove that $f \mapsto t \nabla e^{-t^2 L}f$  and $ f \mapsto t \sqrt{V} e^{-t^2 L}f$ are bounded from $L^\infty$ to $T_2^\infty$,
\item Deduce by interpolation that $\G_L$ is bounded on $L^p$ for all $p \in [2,+\infty)$.
\end{enumerate}
We use the same strategy for $\G_L^F$ and $\G_{\vec{\Delta}}$ in the forthcoming sections.
\bigbreak

In order to prove the boundedness of $f \mapsto t \nabla e^{-t^2 L}f$  and $f \mapsto t \sqrt{V} e^{-t^2 L}f$ from $L^\infty$ to $T_2^\infty$, we need Davies-Gaffney estimates for $\sqrt{t} \nabla e^{-tL}$ and $V^{1/2} \sqrt{t} e^{-tL}$. One says that a family $T_z$ of operators satisfies Davies-Gaffney estimates if for all $f$ in $L^2(M)$ and all closed disjoint sets $E$ and $F$ in $M$,
\begin{equation}\label{chap5-DG}
\| T_z (f \chi_E) \|_{L^2(F)} \leq C e^{-d^2(E,F)/|z|}\| f\|_{L^2(E)}.
\end{equation}

In \cite{Auscher-AMS} and \cite{Auscher}, the authors show that a good condition to prove the boundedness of conical square functions on $L^p$ for $p \in (1,2]$ is $L^p - L^2$ off-diagonal estimates for a well chosen family of operators. Let $ 1 \leq p \leq q < + \infty$. We say that a family $(T_t)_{t \geq 0}$ of operators satisfies $L^p - L^q$ off-diagonal estimates if for any ball $B$ with radius $r_B$ and for any $f$,

\begin{equation}\label{chap5-LpLq}
\left( \int_{C_j(B)}  |T_t f \chi_B |^q \mathrm{d}x \right)^{1/q} \leq   \frac{C} {\mu(B)^{\frac{1}{p}- \frac{1}{q}}} \sup \left( \frac{2^{j}r_B}{\sqrt{t}}, \frac{\sqrt{t}}{ 2^{j} r_B} \right)^\beta e^{-  c 4^j r_B^2/t} \left( \int_{B}  | f |^p \mathrm{d}x \right)^{1/p}.
\end{equation}


We mostly use the case $q=2$, that is 
\begin{equation}\label{chap5-LpLq2}
\left( \int_{C_j(B)}  |T_t f \chi_B |^2 \mathrm{d}x \right)^{1/2} \leq  \frac{C }{\mu(B)^{\frac{1}{p}- \frac{1}{2}}} \sup  \left( \frac{2^j r_B}{\sqrt{t}}, \frac{\sqrt{t}}{2^j r_B} \right)^\beta  e^{-  c 4^j r_B^2/t} \left( \int_{B}  | f |^p \mathrm{d}x \right)^{1/p},
\end{equation}
for all $j \geq 1$ and some $\beta, C >0$ independent of $B$, $j$ and $f$.
Here $C_j (B) = 2^{j+1} B \backslash 2^j B$. One can also consider analytic families of operators and then one can write the previous inequalities for $z$ in some sector $\Sigma(\mu) = \{z \neq 0, |arg(z)| < \mu \}$ for a given $\mu \in (0,\pi/2)$.

In several cases, the uniform boundedness of the semigroup on $L^p$ for implies that $\sqrt{t} \nabla e^{-tL}$ satisfies \eqref{chap5-LpLq2}.
This is the case if the manifold has the volume doubling property \eqref{chap5-doubling} and its heat kernel associated with $\Delta$ satisfies the Gaussian upper estimate \eqref{chap5-Gaussian}. Recall that the heat kernel $p_t$ associated with $\Delta$ satisfies the Gaussian upper estimate \eqref{chap5-Gaussian} if there exist constants $C,c>0$ such that the heat kernel $p_t$ satisfies for all $x,y \in M$
\[\label{chap5-Gaussian}
p_t(x,y) \leq C \frac{e^{-c d^2(x,y)/t}}{Vol(y,t^{1/2})}. \tag{G}
\]
For $L^p-L^q$ off-diagonale estimates for Schrödinger operators on manifolds with subcritical negative part of the potential, see \cite{Assaad-Ouhabaz}. In the case of the Hodge-de Rham operatorn, see Section \ref{chap5-section6}, or \cite{Magniez}.

\section{Study of $\G_L$ }
In this section, $L = \Delta + V$ is a Schrödinger operator with $ 0 \leq V \in L^1_{loc}$. We make some remarks about the case of a signed potentiel at the end of the section. Recall that $\G_L$ is defined by
$$\G_L(f)(x) = \left( \int_0^\infty \int_{B(x,t^{1/2})} |\nabla e^{-t L} f(y)|^2  + V |e^{-tL} f(y)|^2\frac{\mathrm{d}t \mathrm{d}y}{ Vol(y,t^{1/2})}\right)^{1/2}.$$
In this section, we prove the boundedness of $\G_L$ on $L^p(M)$ under some assumptions depending on $p \in (1, 2]$ or $p \in [2,+\infty)$. In the framework of second order divergence form operators $L = div (A \nabla .)$ on $\mathbb{R}^d$, it has been proven in \cite{Auscher} that $\G_L$ is bounded on $L^p$ for all $p \in (1,+\infty)$ and of weak type $(1,1)$ if $A$ is real.

This functional is easier to study for $p \in [2,\infty)$ and its boundedness comes from an argument from \cite{Fefferman-Stein}. The only assumption we need on the manifold here is the volume doubling property \eqref{chap5-doubling}. We start by the boundedness on $L^2$.
\begin{proposition}\label{chap5-theo-G-pgrand}
$\G_L$ is bounded on $L^2$.
\end{proposition}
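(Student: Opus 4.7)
The plan is to reduce the $L^2$ bound of $\G_L$ to that of the vertical functional $H_L$, which is a classical computation using the semigroup.

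First I would apply Fubini's theorem. Computing $\|\G_L(f)\|_2^2$ and swapping the order of integration, the indicator $\mathbf{1}_{\{y \in B(x,t^{1/2})\}}$ becomes $\mathbf{1}_{\{x \in B(y,t^{1/2})\}}$, so the integration in $x$ over $B(y,t^{1/2})$ produces exactly the factor $Vol(y,t^{1/2})$ that appears in the denominator of the measure $\dtent$. The two factors cancel and I obtain
\begin{equation*}
\|\G_L(f)\|_2^2 \;=\; \int_0^\infty \int_M \bigl( |\nabla e^{-tL}f(y)|^2 + V(y)\,|e^{-tL}f(y)|^2 \bigr)\, \mathrm{d}y\, \mathrm{d}t \;=\; \|H_L(f)\|_2^2.
\end{equation*}

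Next I would bound $\|H_L(f)\|_2$ by $\|f\|_2$ using the standard energy identity for the semigroup. Setting $u(t) = \|e^{-tL}f\|_2^2$ and differentiating, one has
\begin{equation*}
u'(t) \;=\; -2\,\langle L e^{-tL}f,\, e^{-tL}f\rangle \;=\; -2\int_M \bigl(|\nabla e^{-tL}f|^2 + V|e^{-tL}f|^2\bigr)\,\mathrm{d}y,
\end{equation*}
by the definition of the quadratic form of $L = \Delta + V$. Integrating in $t \in (0,\infty)$ and using that $e^{-tL}$ is a contraction on $L^2$ so that $u(\infty) \geq 0$ and $u(0) = \|f\|_2^2$, I get
\begin{equation*}
\int_0^\infty \int_M \bigl(|\nabla e^{-tL}f|^2 + V|e^{-tL}f|^2\bigr)\,\mathrm{d}y\,\mathrm{d}t \;\leq\; \tfrac{1}{2}\,\|f\|_2^2.
\end{equation*}
Combining this with the Fubini identity gives $\|\G_L(f)\|_2 \leq \tfrac{1}{\sqrt{2}}\,\|f\|_2$.

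There is no real obstacle here: the $L^2$ case is the easy one because of the self-adjointness of $L$ on $L^2$, and the conical and vertical functionals have the same $L^2$ norm (this is in fact the easy direction of Proposition \ref{chap5-comparaison} on the diagonal $p=2$). The only small care needed is to justify Fubini, which is immediate since the integrand is non-negative, and to handle the quadratic form identity when $V$ is merely $L^1_{loc}$; this is standard because $f \in L^2$ may be approximated by functions in the form domain of $L$ and the identity extends by density.
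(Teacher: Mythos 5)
Your proof is correct and follows essentially the same route as the paper: Fubini to cancel the $Vol(y,t^{1/2})$ factor and identify $\|\G_L(f)\|_2$ with $\|H_L(f)\|_2$, then the quadratic-form identity for $L=\Delta+V$ integrated in $t$ to obtain the bound $\tfrac12\|f\|_2^2$. Your version is in fact slightly more careful than the paper's (which asserts equality with $\tfrac12\|f\|_2^2$ rather than an inequality, implicitly discarding the boundary term at $t=\infty$), but the argument is the same.
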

\begin{proof}
We compute 
\begin{align*}
\|\G_L(f)\|_2^2 &= \int_M \int_0^\infty \int_{B(x,t^{1/2})} |\nabla e^{-tL} f(y)|^2 + V|e^{-tL}f(y)|^2 \frac{\mathrm{d}y \mathrm{d}t \mathrm{d}x}{Vol(y,t^{1/2})} \\
&= \int_M \int_0^\infty \int_{B(y,t^{1/2})} |\nabla e^{-tL} f(y)|^2 + V|e^{-tL}f(y)|^2 \frac{\mathrm{d}x \mathrm{d}t \mathrm{d}y}{Vol(y,t^{1/2})} \\
&= \int_M \left(\int_0^\infty |\nabla e^{-tL} f(y)|^2 + V|e^{-tL}f(y)|^2 \int_{B(y,t^{1/2})} 1 \mathrm{d}x \mathrm{d}t \right)\frac{\mathrm{d}y}{Vol(y,t^{1/2})} \\
&= \int_M \left(\int_0^\infty |\nabla e^{-tL} f(y)|^2 + V|e^{-tL}f(y)|^2 \mathrm{d}t\right)\mathrm{d}y \\
&=  \int_M \int_0^\infty (\Delta + V) e^{-tL} f(y)\cdot e^{-tL} f(y)\mathrm{d}t \,\mathrm{d}y \\
&=\frac{1}{2} \|f \|_2^2.
\end{align*} \end{proof}
For $p \in [2, \infty)$, we have the following theorem.

\begin{theorem}\label{chap5-theogrand}
If $M$ satisfies the doubling volume property \eqref{chap5-doubling}, then $\G_L$ is bounded on $L^p$ for all $p \in [2, \infty)$.
\end{theorem}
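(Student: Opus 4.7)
The plan is to follow the three-step interpolation strategy outlined at the end of Section \ref{chap5-section-tentspaces}: reformulate the $L^p$-boundedness of $\G_L$ as an $L^p \to T_2^p$ bound for the sublinear map $f \mapsto (t\nabla e^{-t^2 L}f,\, t\sqrt V e^{-t^2 L}f)$, verify the endpoints $p = 2$ and $p = \infty$, then apply Proposition \ref{chap5-interpolation}. Step (i) is already Proposition \ref{chap5-theo-G-pgrand}, so the real work is step (ii): proving a $T_2^\infty$ (Carleson-type) estimate under the sole assumption of volume doubling \eqref{chap5-doubling}.

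Concretely, unfolding the $T_2^\infty$ norm and undoing the substitution $t^2 \mapsto s$, the bound to establish takes the form of a Carleson measure estimate
$$
\sup_{B = B(x_0, r)} \frac{1}{\mu(B)} \int_0^{r^2} \int_B \bigl[|\nabla e^{-s L}f(y)|^2 + V(y)|e^{-s L}f(y)|^2\bigr]\, \mathrm{d}y\, \mathrm{d}s \leq C \|f\|_\infty^2.
$$
I would prove this by a dyadic annular decomposition $f = f_0 + \sum_{j \geq 2} f_j$, with $f_0 := f \chi_{4B}$ and $f_j := f\chi_{C_j(B)}$. For the local piece $f_0$, the $L^2$ energy identity
$$
\int_0^{\infty} \bigl[\|\nabla e^{-sL}f_0\|_2^2 + \|\sqrt{V}e^{-sL}f_0\|_2^2\bigr]\, \mathrm{d}s = \tfrac{1}{2}\|f_0\|_2^2,
$$
combined with $\|f_0\|_2^2 \leq \mu(4B)\|f\|_\infty^2 \leq C\mu(B)\|f\|_\infty^2$ (via \eqref{chap5-doubling}), gives the desired bound directly. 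For a far piece $f_j$, I would use the Davies-Gaffney off-diagonal estimates \eqref{chap5-DG} for $\sqrt{s}\nabla e^{-sL}$ and $\sqrt{sV}e^{-sL}$, which hold automatically on any complete Riemannian manifold when $V \geq 0$. These provide
$$
\|\nabla e^{-sL}f_j\|_{L^2(B)}^2 + \|\sqrt{V}e^{-sL}f_j\|_{L^2(B)}^2 \leq \frac{C}{s}\, e^{-c\, 4^j r^2/s}\|f_j\|_2^2,
$$
and integrating in $s \in (0, r^2)$ produces a factor of order $e^{-c\,4^j}$. Using $\|f_j\|_2^2 \leq \mu(2^{j+1}B)\|f\|_\infty^2 \leq C\, 2^{jN}\mu(B)\|f\|_\infty^2$ (doubling again) and summing the convergent series $\sum_{j} 2^{jN} e^{-c\,4^j}$ closes the Carleson estimate.

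With the two endpoints $L^2 \to T_2^2$ and $L^\infty \to T_2^\infty$ in hand, Proposition \ref{chap5-interpolation} delivers $L^p \to T_2^p$ boundedness for all $p \in [2, \infty)$, which is exactly the asserted $L^p$-boundedness of $\G_L$. The only substantive step is the Carleson estimate for the far pieces; the main point there is the balancing of the Davies-Gaffney Gaussian decay $e^{-c\,4^j}$ against the polynomial volume growth $2^{jN}$, which is routine but is the unique place where doubling enters non-trivially. No heat kernel upper bound or other structural assumption on $M$ beyond \eqref{chap5-doubling} is required.
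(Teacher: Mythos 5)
Your proposal is correct and follows essentially the same route as the paper: the identical decomposition $f = f\chi_{4B} + \sum_{j\ge 2} f\chi_{C_j(B)}$, the $L^2$ energy identity for the local piece, Davies--Gaffney estimates balanced against the doubling factor $2^{jN}$ for the far pieces, and tent-space interpolation between the $L^2 \to T_2^2$ and $L^\infty \to T_2^\infty$ endpoints. No substantive differences to report.
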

\begin{proof}
Let $\Gamma$ be either $\nabla$ or the multiplication by $\sqrt{V}$. We show that $f \mapsto t\Gamma e^{-t^2 L} f$ is bounded from $L^\infty$ to $T_2^\infty$. By interpolation it is bounded from $L^p$ to $T_2^p$ for all $p \in [2,\infty]$, what reformulates as the boundedness of $\G_L$ on $L^p$.

Recall that the norm on $T_2^\infty$ is given by $$\| F \|_{T_2^\infty} = \left( \sup_B \frac{1}{\mu(B)}\int_B \int_0^{r_B} |F(x,t)|^2 \frac{\mathrm{d}x\mathrm{d}t}{t} \right)^{1/2}$$
where the supremum is taken over all balls $B$ in $M$ and $r_B$ is the radius of $B$. Fix a ball $B$ and decompose $f = f \chi_{4B} + f \chi_{(4B)^c}$.
For the local part $ f \chi_{4B} $ we have
\begin{align*}
 \frac{1}{\mu(B)} \int_B \int_0^{r_B} |t \Gamma e^{-t^2 L} f \chi_{4B} |^2 \frac{\mathrm{d}x \mathrm{d}t}{t} &\leq \frac{C}{\mu(B)} \left\| \left(\int_0^\infty |\Gamma  e^{-tL} f \chi_{4B}|^2  \mathrm{d}t \right)^{1/2} \right\|_2^2 \\&\leq \frac{C}{\mu(B)} \| f \chi_{4B}\|_2^2 \\&\leq C  \|f\|_\infty^2.
\end{align*}
We now deal with the non-local part. We decompose $f \chi_{(4B)^c} = \sum_{j\geq 2} f \chi_{C_j}$, where $C_j(B) = 2^{j+1} B \backslash 2^j B$. Davies-Gaffney estimates \eqref{chap5-DG} for $ \sqrt{t} \nabla e^{-tL}$ give
\begin{align*}
&\left( \frac{1}{\mu(B)} \int_0^{r_B} \int_B  |t\Gamma e^{-t^2 L} \sum_{j \geq 2} f \chi_{C_j} |^2 \frac{\mathrm{d}x \mathrm{d}t}{t} \right)^{1/2} \\&\hspace{4cm} \leq C \sum_{j\geq 2}  \left( \int_0^{r_B} \int_{C_j} \frac{e^{\frac{-4^j r_B^2}{t^2}}\mu(C_j)}{\mu(B)\mu(C_j)} |f|^2 \frac{\mathrm{d}x \mathrm{d}t}{t} \right)^{1/2} \\
&\hspace{4cm}\leq C   \sum_{j \geq 2} \left( \frac{2^{jN}}{\mu(C_j)} \int_0^{r_B} e^{\frac{-4^j r_B^2}{t^2}}  \frac{\mathrm{d}t}{t}  \int_{C_j} |f|^2  \mathrm{d}x \right)^{1/2}\\
&\hspace{4cm} \leq C  \| f\|_\infty.
\end{align*}
We obtain that $f \mapsto t \Gamma e^{-t^2} f$ is bounded from $L^\infty$ to $T_2^\infty$.
It is then bounded from $L^p$ to $T_2^p$ for all $p \in [2,\infty] $ by interpolation. This gives that $\G_L$ is bounded on $L^p$.
We see this by writing
\begin{align*}
\G_L(f)(x) &= \left( \int_0^\infty \int_{B(x,t^{1/2})} |\Gamma e^{-tL} f|^2+ V |e^{-tL} f|^2 \frac{\mathrm{d}y \mathrm{d}t}{Vol(y,t^{1/2})} \right)^{1/2} \\
&=\frac{1}{2} \left( \int_0^\infty \int_{B(x,s)} | s \Gamma e^{-s^2L} f|^2 +  V |s e^{-s^2 L} f|^2  \frac{\mathrm{d}y \mathrm{d}s}{s Vol(y,s)} \right)^{1/2}\\
&= \frac{1}{2} A(F)(x)
\end{align*} where $F(x,s) = \left(|s \nabla e^{-s^2 L} f|^2 + |s V e^{-s^2 L} f|^2 \right)^{1/2}$.
Then  \begin{equation*}
\| \G_L(f) \|_p = \frac{1}{2}\|F\|_{T_2^p} \\ \leq C \|f\|_p. \end{equation*} \end{proof}

\begin{remark}We give two examples which show that the Littlewood-Paley-Stein functional and the conical square functional have different behaviors for $p \in [2, \infty)$.
\begin{enumerate}
\item In $\mathbb{R}^d$, under reasonable assumptions (see \cite{Ouhabaz-LPS}), if $V$ is not identically equal to zero, then $H_L$ is unbounded on $L^p$ for $p > d$, whereas $\G_L$ is bounded.
\item Let $M$ be the connected sum of two copies of $\mathbb{R}^d$ glued among the unit circle. The Littlewood-Paley-Stein functional $H_\Delta$ is unbounded on $L^p$ for $p \in (d,+\infty)$ whereas $\G_\Delta$ is bounded (see \cite{Carron-Coulhon-Hassell}).
\end{enumerate}
\end{remark}
The case $p  \in (1, 2]$ is more difficult. We have to assume off-diagonal $L^p-L^2$ estimates for the gradient of semigroup, namely
\begin{multline}\label{chap5-LpL2gradient}
\| \sqrt{t}\nabla e^{-t L} f \|_{L^2(C_j)}  + \| \sqrt{t}\sqrt{V} e^{-t L} f \|_{L^2(C_j)} \\ \leq \frac{C}{\mu(B)^{1/p - 1/2}} \sup( \frac{2^j r}{\sqrt{t}},\frac{\sqrt{t}}{2^j r})^\beta e^{-4^j {r_B}^2/t} \|f\|_{L^p(B)}.
\end{multline}Note that these estimates are always true in the case of $\mathbb{R}^n$ if $V \geq 0$. 
For a signed potential $V = V^+ - V^-$, the discussion is postponed to the end of the section.
\begin{theorem}\label{chap5-theo-G-ppetit} Assume that $M$ satisfies the doubling property \eqref{chap5-doubling} and $\{ \sqrt{t} \nabla e^{-tL} \}$ and $\{ \sqrt{t} \sqrt{V} e^{-tL} \}$ satisfy $L^p-L^2$ off diagonal estimates \eqref{chap5-LpL2gradient} for some $p \in [1,2)$. Then $\G_L$ is of weak type $(p,p)$ and bounded on $L^q$ for all $ p < q \leq 2$.
\end{theorem}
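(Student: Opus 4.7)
\emph{Plan.} My strategy is to establish a weak-type $(p,p)$ inequality for $\mathcal{G}_L$, then combine it with the $L^2$-boundedness from Proposition~\ref{chap5-theo-G-pgrand} via the Marcinkiewicz interpolation theorem to obtain $L^q$-boundedness for every $q\in(p,2]$. The weak-type bound is proved by a Calderón--Zygmund decomposition adapted to the $L^p$ norm, in the spirit of \cite{Auscher,Auscher-AMS}.

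At a fixed level $\alpha>0$, I decompose $f=g+\sum_i b_i$ with the standard properties: $|g|\le C\alpha$ almost everywhere, $\|g\|_p\le\|f\|_p$, each $b_i$ is supported in a ball $B_i=B(x_i,r_i)$ with $\|b_i\|_p^p\le C\alpha^p\mu(B_i)$, and $\sum_i\mu(B_i)\le C\alpha^{-p}\|f\|_p^p$ with bounded overlap. By sublinearity, $\mu\{\mathcal{G}_L(f)>\alpha\}\le\mu\{\mathcal{G}_L(g)>\alpha/2\}+\mu\{\mathcal{G}_L(b)>\alpha/2\}$. The good part is handled by Chebyshev's inequality at exponent $2$ and Proposition~\ref{chap5-theo-G-pgrand}: since $\|g\|_2^2\le\|g\|_\infty^{2-p}\|g\|_p^p\le C\alpha^{2-p}\|f\|_p^p$, one obtains $\mu\{\mathcal{G}_L(g)>\alpha/2\}\le C\alpha^{-p}\|f\|_p^p$.

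For the bad part, set $\Omega^\ast=\bigcup_i 4B_i$, so that $\mu(\Omega^\ast)\le C\alpha^{-p}\|f\|_p^p$. The key remaining step is to prove $\|\mathcal{G}_L(b)\|_{L^2(M\setminus\Omega^\ast)}^2\le C\alpha^{2-p}\|f\|_p^p$, which then yields the desired bound on $\mu\{x\notin\Omega^\ast:\mathcal{G}_L(b)(x)>\alpha/2\}$ by another application of Chebyshev at exponent $2$. I expand $\mathcal{G}_L(b)^2$ as a single integral in $(x,y,t)$, swap the $x$ and $y$ integrations as in the proof of Proposition~\ref{chap5-theo-G-pgrand}, and observe that for $x\notin 4B_i$ and $y\in B(x,t^{1/2})$ the point $y$ lies in an annulus $C_j(B_i)$ with $j$ controlled by $d(y,x_i)/r_i$. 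I split the time integral at $t=r_i^2$: in the short-time regime ($t\le r_i^2$), the exponential factor $e^{-c4^j r_i^2/t}$ from \eqref{chap5-LpL2gradient} applied to $\sqrt{t}\,\nabla e^{-tL}b_i$ and $\sqrt{t}\,\sqrt{V}\,e^{-tL}b_i$ provides geometric decay in $j$ and summability in $t$; in the long-time regime, the polynomial factor $(\sqrt{t}/(2^jr_i))^\beta$ is absorbed through careful $t$-integration combined with the doubling property to compare $\mu(C_j(B_i))$ and $\mu(B_i)$. Summing over $j$ and then over $i$, and using $\sum_i\|b_i\|_p^p\le C\|f\|_p^p$, yields the required estimate.

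The main obstacle is the absence of a naive orthogonality: the sublinear bound $\mathcal{G}_L(b)\le\sum_i\mathcal{G}_L(b_i)$ cannot be squared and integrated term-by-term without losing a factor comparable to the (unbounded) number of indices. To avoid this, I keep $\mathcal{G}_L(b)^2$ as a single integral of $|\sum_i\nabla e^{-tL}b_i|^2+V|\sum_i e^{-tL}b_i|^2$, expand the square, and exploit the disjoint supports of the $b_i$ together with the spatial decay encoded in \eqref{chap5-LpL2gradient} to damp the cross-terms. A secondary technical difficulty is the long-time regime $t>r_i^2$, where the support localisation of $b_i$ no longer helps directly; the polynomial growth prefactor of \eqref{chap5-LpL2gradient} must then be delicately controlled by the $t$-integration so as to preserve summability in $i$.
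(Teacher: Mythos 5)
There is a genuine gap in your treatment of the bad part, and it sits exactly where you relegate the difficulty to a ``secondary technical'' issue: the long-time regime $t>r_i^2$. The off-diagonal estimate \eqref{chap5-LpL2gradient} applied directly to $\sqrt{t}\,\nabla e^{-tL}b_i$ gives a prefactor $\sup\bigl(2^jr_i/\sqrt{t},\sqrt{t}/(2^jr_i)\bigr)^{\beta}e^{-c4^jr_i^2/t}$; for $t\gg 4^jr_i^2$ the exponential is essentially $1$ and the polynomial factor \emph{grows} like $(\sqrt{t}/(2^jr_i))^{\beta}$, so the resulting $t$-integral $\int_{r_i^2}^{\infty}\|\nabla e^{-tL}b_i\|_{L^2(C_j)}^2\,\mathrm{d}t$ diverges. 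No care in the $t$-integration can fix this, because the semigroup provides no decay in $t$ when applied to a raw atom $b_i$ (note also that for $p<2$ the atoms $b_i$ need not even lie in $L^2$, so you cannot fall back on the $L^2$ theory of Proposition \ref{chap5-theo-G-pgrand} for a single $b_i$). The missing idea is the regularization used in the paper (following Blunck--Kunstmann, Auscher and Chen--Martell): one writes $b_i=A_{r_i}b_i+(I-e^{-r_i^2L})^Kb_i$ with $A_{r_i}=I-(I-e^{-r_i^2L})^K$. The piece $\sum_iA_{r_i}b_i$ is shown, by a duality argument against $\psi\in L^2$ combined with the $L^p$--$L^2$ bounds for $A_{r_i}$ and the maximal function, to have $L^2$ norm of the right size, and is then handled by the $L^2$ boundedness of $\G_L$. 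For the piece $(I-e^{-r_i^2L})^Kb_i$, the functional calculus yields the crucial extra decay $(r_i^2/t^2)^K$ for $t\gg r_i$ (this is the content of Lemma \ref{chap5-lemme-preuve-chen}, giving $I_{i,j}\le C\mu(B_i)^{1/2}2^{-2jK}$), which restores integrability in time and, upon choosing $K>N/4$, summability in $j$ against the volume growth $2^{jN/2}$.

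Your other steps (the CZ decomposition, the good part via Chebyshev at exponent $2$, the reduction to $M\setminus\bigcup_i cB_i$, the splitting into annuli and the swap of the $x$ and $y$ integrations) all match the paper's argument, and the concern you raise about squaring the sublinear bound $\G_L(b)\le\sum_i\G_L(b_i)$ is legitimate; but the resolution is not to ``expand the square and damp the cross-terms'', it is to dualize against an $L^2(M^+,\mathrm{d}y\,\mathrm{d}t/t)$ function $\Phi$ and use $\inf_{B_i}\M(\tilde\Phi)^{1/2}$ together with the finite overlap of the balls, as the paper does for the global term $K_{glob}$. As written, your plan cannot be completed without importing the $(I-e^{-r_i^2L})^K$ cancellation.
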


\begin{remark}
The proof is the same as in \cite{Chen-Martell} where the authors deal with divergence form operators on $\mathbb{R}^n$. We reproduce the details for the sake of completeness. We write down the proof for the gradient part $$\G_L^{(\nabla)}(f)(x) = \left( \int_0^\infty \int_{B(x,t^{1/2})} |\nabla e^{-tL} f|^2 \frac{\mathrm{d}y \mathrm{d}t}{Vol(y,t^{1/2})} \right)^{1/2}.$$
The proof is the same for the part with $\sqrt{V}$.
\end{remark}

\begin{proof}
Fix $p \in [1,2)$. $\G_L$ is bounded on $L^2(M)$, then by the Marcinkiewickz interpolation theorem it is enough to prove that $\G_L$ is of weak type $(p,p)$. Fix $\lambda > 0$ and $f \in L^p$, we use the $L^p$ Calderon-Zygmund decomposition (see \cite{Chen-Martell} or \cite{Stein-Book-2}) of $f$ by writing $f = g + \sum_i b_i$ where \begin{enumerate}
\item $(B_i)_{i\geq 1}$ is sequence of balls of radius $r_i>0$ in $M$ such that the sequence $(4 B_i)_{i\geq 1}$  has finite overlap number, that is $\sup_{x \in M} \sum_{i \geq i} \chi_{4 B_i}(x) < \infty$,
\item $|g| \leq C \lambda$ almost everywhere,
\item The support of $b_i$ is included in $B_i$ and $\int_{B_i} |b_i|^p \mathrm{d}x \leq C \lambda \mu(B_i)$,
\item $\sum_i \mu(B_i) \leq \frac{C}{\lambda^p} \int_M |f(x)|^p \mathrm{d}x$.

\end{enumerate}

For simplicity, we write down the proof in the case $p = 1$. It is the same for any $p \in (1,2)$. Set $A_{r_i} := I - (I-e^{-r_i^2 L})^K$ for $K$ a positive integer to be chosen.
One has \begin{align*}\mu(\{x : \G_L^{(\nabla)}(f)(x) < \lambda \}) &\leq \mu(\{x : \G_L^{(\nabla)}(g)(x) < \lambda/3 \})  \\ &+  \mu( \{ x: \G_L^{(\nabla)}(\sum A_{r_i} b_i)(x) < \lambda/3 \} ) \\
&+ \mu(\{ x :  \G_L^{(\nabla)}(\sum (I-e^{-r_i^2 L})^K b_i)(x) < \lambda/3\}) \\ &=: I + II + III. 
\end{align*}
Using the boundedness of $\G_L^{(\nabla)}$ on $L^2$ and the properties of the Calderon-Zygmund decomposition, it is a classical fact that $ I \leq \frac{C}{\lambda} \| f \|_1$. 
It remains to estimate $II$ and $III$. We first estimate $II$. Take $0 \leq  \psi \in L^2(M)$ with norm $\|\psi\|_2 = 1$. One has 
$$\int_M \left| \sum A_{r_i} b_i (x) \right| \psi(x) \mathrm{d}x \leq \sum_{i \geq 1} \sum_{j \geq 0} \left( \int_{C_j(B_i)} |A_{r_i} b_i|^2 \mathrm{d}x \right)^{1/2} \left( \int_{2^{j+1} B_i} \psi^2 \mathrm{d}x \right)^{1/2}.$$
We note that $A_{r_i}$ satisfies $L^p - L^2$ estimates \eqref{chap5-LpLq2}. The notation we use is 
\begin{equation}
\| A_{r_i} f \|_{L^2(C_j)} \leq \frac{C }{\mu(B)^{1/2}} \sup( 2^j,2^{-j})^\gamma e^{-c 4^j } \|f\|_{L^1(B)} \end{equation}
for some $\gamma >0$. It leads to
\begin{align*}
\int_M \left| \sum_{i \geq 1} A_{r_i} b_i\right| \psi \mathrm{d}x & \leq  \sum_{ i \geq 1} \sum_{j \geq 0}  \frac{C \mu(2^{j+1} B)^{1/2}}{\mu(B)^{1/2}}e^{-c4^j} \left[ \sup ( 2^j,2^{-j})\right]^\gamma \\
&\times\left( \int_{B_i} |b_i| \mathrm{d}x \right) \inf_{B_i} \M(\psi^2)^{1/2}(x)\\
&\leq \lambda \int_{\cup_i B_i} \M(\psi^2)^{1/2}(x) \mathrm{d}x \\
&\leq \lambda \mu(\bigcup_i B_i)^{1/2} \| \psi\|_2 \\
&\leq C \lambda^{1/2} \| f\|_1^{1/2}. 
\end{align*}
Since $\sum_i A_{r_i} b_i$ is in $L^2$, the boundedness of $\G_L^{(\nabla)}$ gives $II  \leq C \frac{1}{\lambda} \| f\|_1$.
The two last inequalities come from Jensen and the boundedness of $\M$. Since $\sum_i A_{r_i} b_i$ is in $L^2$, the boundedness of $\G_L^{(\nabla)}$ on this space gives $II \leq \frac{C}{\lambda} \| f\|_1$. Finally, we estimate $III$. Markov inequality gives
\begin{align*}
III & \leq  \mu\left( \bigcup_i 5 B_i\right) + \mu\left(\{x \in M \backslash \bigcup_i 5 B_i, \G_L^{(\nabla)}(\sum_i (I-e^{-r_i^2 L})^K b_i)(x) \geq \lambda/4 \}\right) \\
&\leq C \left[ \frac{1}{\lambda} \|f\|_1 + \frac{1}{\lambda^2}  \int_{M \backslash \bigcup_i 5 B_i}  \G_L^{(\nabla)}(\sum_i (I-e^{-r_i^2 L})^K b_i)^2(x) \mathrm{d}x \right].
\end{align*}
Set $h_i := (I-e^{-r_i^2 L})^K b_i$. One has
\begin{align*}
&\int_{M \backslash \bigcup_i 5 B_i}  \G_L^{(\nabla)}(\sum_i  h_i)^2(x) \mathrm{d}x  \\
&\leq C \int_0^\infty \int_M \left|\sum_i \chi_{4 B_i}(y) t \nabla e^{-t^2 L} h_i \right|^2 \mu(B(y,t) \backslash \bigcup 5 B_i) \frac{\mathrm{d}x \mathrm{d}t}{t Vol(y,t)} \\
&+ C  \int_0^\infty \int_M \left|\sum_i \chi_{M \backslash 4 B_i}(y) t \nabla e^{-t^2 L} h_i \right|^2 \mu(B(y,t) \backslash \bigcup 5 B_i) \frac{\mathrm{d}x \mathrm{d}t}{t Vol(y,t)} \\
&=: C \left[ K_{loc} + K_{glob} \right].
\end{align*}
We start by estimating $K_{loc}$. Given $y \in 4 B_i$, if there exists $x \in B(y,t) \backslash  \bigcup_i 5 B_i$, then  $t >r_i$. Therefore,
\begin{align*}
K_{loc} &\leq C \sum_{i=1}^\infty \int_{r_i}^\infty \int_{4 B_i} \left| t\nabla e^{-t^2 L} h_i(y) \right|^2 \mu(B(y,t) \backslash \bigcup_i 5 B_i )\frac{\mathrm{d}y \mathrm{d}t}{Vol(y,t)} \\
&\leq C  \sum_{i=1}^\infty \int_{r_i}^\infty \int_{4 B_i} \left| t\nabla e^{-t^2 L} h_i(y) \right|^2 \mathrm{d}y \mathrm{d}t.
\end{align*}
The off-diagonal estimates \eqref{chap5-LpL2gradient} give
\begin{align*}
 \left( \int_{4 B_i} \left| t\nabla e^{-t^2 L} (h_i(y) \chi_{4Bi}) \right|^2 \mathrm{d}y \right)^{1/2} &\leq \frac{C}{\mu(4B_i)^{1/2}}  \left( \frac{r_i}{t}\right)^{\beta} \int_{4 B_i} \left| h_i(y) \right|\mathrm{d}y \\
 &\leq \frac{C}{\mu(4B_i)^{1/2}}  \left( \frac{r_i}{t}\right)^{\beta} \int_{4 B_i} \left| b_i(y) \right|\mathrm{d}y  \\
 &\leq \frac{\mu(B_i)}{\mu(4B_i)^{1/2}}  \left( \frac{r_i}{t}\right)^{\beta} \lambda \\
 &\leq \mu(B_i)^{1/2} \left( \frac{r_i}{t}\right)^{1/2} \lambda.
\end{align*}
By the same arguments and expending $(I  -e^{r_i^2 L})^M$ we obtain 
\begin{align*}
 &\left(\int_{4 B_i} \left| t\nabla e^{-t^2 L} h_i(y) \chi_{(4Bi)^c}) \right|^2 \mathrm{d}y \right)^{1/2} \\&\leq \left(  \int_{4 B_i} \left|\sum_{j\geq 2 } t\nabla e^{-t^2 L} h_i(y) \chi_{C_j} \right|^2 \mathrm{d}y \right)^{1/2} \\
 &\leq  \sum_{j\geq 2 }  \left( \int_{2^{j+1} B_i} \left| t\nabla e^{-t^2L} h_i(y) \chi_{C_j} \right|^2 \mathrm{d}y  \right)^{1/2}\\
 &\leq C  \sum_{j\geq 2 } \frac{2^{j\beta}}{\mu(2^{j+1} B_i)^{1/2}} \left(\frac{r_i}{t}\right)^{\beta} \sum_{k=1}^M \left( \int_{C_j(B_i)} \left| e^{-kr_i^2 L} b_i \right| \mathrm{d}y \right)  \\
 &\leq C \sum_{j\geq 2 } \frac{2^{j(\beta+\gamma)}}{\mu(2^{j+1}B_i)^{1/2}} \left(\frac{r_i}{t}\right)^{\beta}   e^{-c4^j}\left( \int_{ B_i} \left|b_i \right| \mathrm{d}y \right).
  \end{align*}
  The properties of the Calderon-Zygmund decomposition and the volume doubling property \eqref{chap5-doubling} give
 \begin{align*}
 \sum_{j\geq 2 } \frac{2^{j(\beta+\gamma)}}{\mu(2^{j+1}B_i)^{1/2}} \left(\frac{r_i}{t}\right)^{\beta}   e^{-c4^j}\left( \int_{ B_i} \left|b_i \right| \mathrm{d}y \right)
 &\leq C \lambda \sum_{j\geq 2 } \frac{2^{j(\beta+\gamma)}\mu(B_i)}{\mu(2^{j+1}B_i)^{1/2}} \left(\frac{r_i}{t}\right)^{\beta}   e^{-c4^j} \lambda \\
 &\leq C \lambda \mu(B_i)^{1/2} \left(\frac{r_i}{t}\right)^{\beta} .
\end{align*}
By the properties of the Calderon-Zygmund decomposition again we have
\begin{align*}
K_{loc} &\leq C \lambda^2 \sum_i \mu (B_i) \int_{r_i}^\infty \left(\frac{r_i}{t}\right)^{2\beta} \frac{\mathrm{d}t}{t} \\
&\leq C \lambda^2 \sum_i \mu(B_i)\\
&\leq C \lambda \|f \|_1.
\end{align*}
Finally, we deal with $K_{glob}$. Take $\Phi \geq 0$ in $L^2(M^+,\frac{\mathrm{d}y \mathrm{d}t}{t})$ with norm $\|\Phi \|_2 = 1$. Set $$\tilde{\Phi}(y) := \int_0^\infty \Phi(y,t)^2 \frac{\mathrm{d}t}{t}.$$ We have
\begin{align*}
\int_0^\infty &\int_M \left|\sum_{i \geq 1}\chi_{(4B_i)^c}(y) t \nabla  e^{-t^2 L} h_i(y) \right| \Phi(y,t) \frac{\mathrm{d}y \mathrm{d}t}{t} \\ 
&= \int_0^\infty \int_M \left|\sum_{i \geq 1} \sum_{j \geq 2} \chi_{C_j(B_i)}(y) t \nabla  e^{-t^2 L} h_i(y) \right| \Phi(y,t) \frac{\mathrm{d}y \mathrm{d}t}{t}\\
&\leq C \sum_{i \geq 1} \sum_{j \geq 2}\left( \int_0^\infty \int_{C_j(B_i)} |t \nabla e^{-t^2 L} h_i(y) |^2 \right)^{1/2} \left( \int_0^\infty \int_{C_j(B_i)} \Phi(y,t)^2 \frac{\mathrm{d}y \mathrm{d}t}{t} \right)^{1/2}
\\
&\leq C \sum_{i \geq 1} \sum_{j \geq 2} I_{i,j} \mu(C_j(B_i))^{1/2} \inf_{x \in B_i} (\M(\tilde{\Phi})(x))^{1/2}
\end{align*}
where $$I_{i,j} = \left( \int_0^\infty  \int_{C_j(B_i)} |t \nabla e^{-t^2 L}h_i(y)|^2 \mathrm{d}y \frac{\mathrm{d}t}{t} \right)^{1/2} \leq C \mu(B_i)^{1/2} 2^{-j(2K)}$$ by Lemma \ref{chap5-lemme-preuve-chen} below. Therefore,
\begin{align*}
\int_0^\infty &\int_M \left|\sum_{i \geq 1}\chi_{(4B_i)^c}(y) t \nabla  e^{-t^2 L} h_i(y) \right| \Phi(y,t) \frac{\mathrm{d}y \mathrm{d}t}{t} \\  
&\leq C \lambda \sum_{i \geq 1}  \sum_{j \geq 2} \mu(B_i)^{1/2} \mu(C_j(B_i))^{1/2} 2^{-2jK} \inf_{x \in B_i} (\M(\tilde{\Phi})(x))^{1/2}  \\
& \leq C \lambda \sum_{i \geq 1}  \sum_{j \geq 2} \mu(B_i)2^{-j(2K - N/2)} \inf_{x \in B_i} (\M(\tilde{\Phi})(x))^{1/2}.
\end{align*}
Choosing $K > N/4$ gives
\begin{align*}
\lambda \sum_{i \geq 1}  \sum_{j \geq 2} \mu(B_i)2^{-j(2K - N/2)} \inf_{x \in B_i} (\M(\tilde{\Phi})(x))^{1/2} &\leq C \lambda \sum_{i \geq 1} \mu(B_i)  \inf_{x \in B_i} (\M(\tilde{\Phi})(x))^{1/2}\\
&\leq C \lambda \int_{\bigcup B_i} (\M ( \tilde{\Phi}))^{1/2} \mathrm{d}x \\
&\leq C \lambda \mu (\bigcup B_i)^{1/2} \\
&\leq \lambda^{1/2} \|f \|_1^{1/2}.
\end{align*}
Here the last inequality comes from the properties of the Calderon-Zygmund decomposition. Hence, $III \leq \lambda^{-1} \|f \|_1$ and we obtain the result.
\end{proof}
In the proof, we use the following lemma which follows from functional calculus on $L^2(M)$ (see \cite{Chen-Martell}).
\begin{lemma}\label{chap5-lemme-preuve-chen}
For any $i \geq 1$ and $j \geq 2$, 
$$I_{i,j}= \left( \int_0^\infty  \int_{C_j(B_i)} |t \nabla e^{-t^2 L}h_i(y)|^2 \mathrm{d}y \frac{\mathrm{d}t}{t} \right)^{1/2} \leq C \mu(B_i)^{1/2} 2^{-j(2K)}.$$
\end{lemma}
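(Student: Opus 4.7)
The plan is to convert each of the $K$ factors of $I-e^{-r_i^2L}$ into a genuine power of $L$, and then combine the resulting polynomial decay with the Gaussian off-diagonal decay of the gradient semigroup. To this end, iterate the identity $I-e^{-r_i^2L}=\int_0^{r_i^2}Le^{-sL}\,ds$ to write
$$h_i=(I-e^{-r_i^2L})^Kb_i=\int_{[0,r_i^2]^K} L^K e^{-(s_1+\cdots+s_K)L}b_i\,ds_1\cdots ds_K,$$
so that, setting $T=T(t,s):=t^2+s_1+\cdots+s_K$,
$$t\,\nabla e^{-t^2L}h_i=\int_{[0,r_i^2]^K} t\,\nabla L^K e^{-TL}b_i\,ds.$$
The point of this rewriting is to trade the $K$ cancellations in $(I-e^{-r_i^2L})^K$ for a gain of $(r_i/t)^{2K}$ at large $t$, while still preserving Gaussian decay at small $t$.

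Next, I would establish an $L^1$-$L^2$ off-diagonal bound for the composite operator. Factor
$$t\,\nabla L^K e^{-TL}=\frac{t}{\sqrt T}\cdot\frac{1}{T^K}\cdot\bigl(\sqrt T\,\nabla e^{-TL/2}\bigr)\circ\bigl((TL)^K e^{-TL/2}\bigr).$$
The operator $\sqrt T\,\nabla e^{-TL/2}$ satisfies the assumed $L^1$-$L^2$ off-diagonal estimate \eqref{chap5-LpL2gradient}; the operator $(TL)^Ke^{-TL/2}$, while bounded on $L^2$ by the spectral theorem, also satisfies $L^2$-$L^2$ Davies--Gaffney estimates with the same Gaussian decay $e^{-cd^2/T}$ (this is where functional calculus on $L^2$ enters: one represents $(TL)^Ke^{-TL/2}$ as a Cauchy contour integral of resolvents $(I+zTL)^{-1}$ and invokes the standard Gaffney bounds for $\Delta+V$ with $V\ge0$). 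Composing these two estimates on dyadic annuli and summing via the doubling property \eqref{chap5-doubling} yields
$$\bigl\|t\,\nabla L^K e^{-TL}b_i\bigr\|_{L^2(C_j(B_i))}\le\frac{C}{T^K}\cdot\frac{t}{\sqrt T}\cdot\frac{\|b_i\|_{L^1(B_i)}}{\mu(B_i)^{1/2}}\Bigl(\tfrac{2^jr_i}{\sqrt T}\vee\tfrac{\sqrt T}{2^jr_i}\Bigr)^{\!\beta}e^{-c4^jr_i^2/T}.$$

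Finally, I would plug in $\|b_i\|_{L^1(B_i)}\le C\lambda\mu(B_i)$ from the Calder\'on--Zygmund decomposition, apply Minkowski in $s$ and square-integrate in $dt/t$. The change of variables $T=t^2+a$ with $a=s_1+\cdots+s_K$ gives $t\,dt=dT/2$, which neutralises the factor $t/\sqrt T$ and removes the apparent singularity at $t=0$. Splitting the resulting $T$-integral at the threshold $T\sim 4^jr_i^2$, the Gaussian $e^{-c4^jr_i^2/T}$ produces arbitrary polynomial decay on the small-$T$ regime, while the factor $1/T^{2K}$ integrated over $T\gtrsim 4^jr_i^2$ yields $(4^jr_i^2)^{-2K+\beta}$; combined with the $r_i^{2K}$-volume contribution of the $s$-integral, both regimes contribute $2^{-4jK}\mu(B_i)$, and taking square roots produces the claimed $C\mu(B_i)^{1/2}2^{-2jK}$. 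The principal obstacle is precisely the composition off-diagonal bound of the second step: ensuring that inserting the extra factor $(TL)^Ke^{-TL/2}$ does not destroy the Gaussian off-diagonal decay of $\sqrt T\,\nabla e^{-TL/2}$. Once this is established, the rest is routine integration.
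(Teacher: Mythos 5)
The paper does not actually prove Lemma \ref{chap5-lemme-preuve-chen}: it only remarks that the estimate ``follows from functional calculus on $L^2(M)$'' and defers to Chen--Martell. Your argument is precisely the standard proof from that reference --- iterate $I-e^{-r_i^2L}=\int_0^{r_i^2}Le^{-sL}\,ds$ to trade the $K$ cancellations for a factor $(r_i^2/T)^K$, combine with Gaussian off-diagonal decay, and integrate in $T$ --- so in substance you are supplying the omitted proof along the intended lines, and the overall structure (the $(r_i/t)^{2K}$ gain at large $t$, the Gaussian killing the small-$t$ regime, the $r_i^{2K}$ volume of the $s$-cube) is correct.

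Two points need repair. First, the composition step as written does not quite close: you factor $t\nabla L^Ke^{-TL}$ with $(TL)^Ke^{-TL/2}$ as the \emph{inner} operator and claim for it only $L^2$-boundedness and $L^2$--$L^2$ Davies--Gaffney; but in the $p=1$ Calder\'on--Zygmund decomposition $b_i$ is merely in $L^1(B_i)$, so the inner factor must carry the $L^1\to L^2$ off-diagonal estimate. You should either put the plain semigroup $e^{-TL/2}$ innermost (it satisfies $L^1$--$L^2$ off-diagonal bounds in this setting --- the paper implicitly uses exactly this when it asserts such bounds for $A_{r_i}$) and let the outer factor be $\sqrt{T}\,\nabla (TL)^Ke^{-TL/2}$, whose $L^2$--$L^2$ Davies--Gaffney bounds follow from analyticity and the $L^2$ estimates for $\sqrt{t}\nabla e^{-tL}$, or justify $L^1\to L^2$ off-diagonal bounds for $(TL)^Ke^{-TL/2}$ directly. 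Second, a bookkeeping issue you share with the paper's own statement: plugging $\|b_i\|_{L^1(B_i)}\le C\lambda\mu(B_i)$ into your chain produces $C\lambda\,\mu(B_i)^{1/2}2^{-2jK}$, not $C\mu(B_i)^{1/2}2^{-2jK}$; the factor $\lambda$ is needed (and is indeed present) where the lemma is invoked in the estimate of $K_{glob}$, so your conclusion should retain it. Relatedly, the polynomial factor $\bigl(\tfrac{2^jr_i}{\sqrt T}\vee\tfrac{\sqrt T}{2^jr_i}\bigr)^{\beta}$ costs a $2^{j\beta}$ in the large-$T$ regime, so the honest decay is $2^{-j(2K-\beta)}$ rather than $2^{-2jK}$; this is harmless for the application since $K$ is free to be taken large, but you should say so rather than let the $\beta$ silently disappear.
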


The classical setting of doubling manifolds with an heat kernel satisfying a Gaussian upper estimates is covered by the theorem.

\begin{corollary}
Assume that $M$ satisfies the doubling property \eqref{chap5-doubling} and that the heat kernel associated with $\Delta$ satisfies the Gaussian upper estimate \eqref{chap5-Gaussian}. Then $\G_L$ is bounded on $L^p$ for all $p \in (1,+\infty)$.
\end{corollary}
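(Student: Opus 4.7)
The plan is to combine the two main theorems of this section, one for each half of the range $(1,\infty)$. For $p \in [2,\infty)$ the doubling property alone is enough, so Theorem \ref{chap5-theogrand} applies with no additional work. For $p \in (1,2]$ the plan is to invoke Theorem \ref{chap5-theo-G-ppetit} at the endpoint $p_0 = 1$: its conclusion then gives boundedness of $\G_L$ on every $L^q$ with $q \in (1,2]$, which combined with the first case yields the full range $(1,+\infty)$.

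The only non-trivial step is to verify the hypothesis of Theorem \ref{chap5-theo-G-ppetit} at $p_0 = 1$, namely the $L^1$–$L^2$ off-diagonal estimates \eqref{chap5-LpL2gradient} for the families $\{\sqrt{t}\nabla e^{-tL}\}$ and $\{\sqrt{t}\sqrt{V}\,e^{-tL}\}$. First, since $V \geq 0$, the Feynman–Kac formula (equivalently, domination of positive semigroups) yields that the heat kernel of $L$ is pointwise dominated by that of $\Delta$ and therefore inherits the Gaussian bound \eqref{chap5-Gaussian}. Together with \eqref{chap5-doubling} this immediately gives $L^1$–$L^2$ off-diagonal estimates for $e^{-tL}$ itself by straightforward integration.

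To pass from $e^{-tL}$ to $\sqrt{t}\nabla e^{-tL}$ and $\sqrt{t}\sqrt{V}\,e^{-tL}$, the strategy is the classical composition trick: write
\[
 \sqrt{t}\nabla e^{-tL} = \bigl(\tfrac{\sqrt{t}}{\sqrt{2}}\nabla e^{-(t/2)L}\bigr) \circ e^{-(t/2)L},
\]
and similarly for the $\sqrt{V}$-factor, then compose the $L^2$–$L^2$ Davies–Gaffney estimate \eqref{chap5-DG} for the first factor with the $L^1$–$L^2$ off-diagonal estimate for the second. The $L^2$–$L^2$ Davies–Gaffney bounds for $\sqrt{t}\nabla e^{-tL}$ and $\sqrt{t}\sqrt{V}\,e^{-tL}$ themselves follow from analyticity of the semigroup on $L^2$ combined with the quadratic-form identity
\[
 \int_M \bigl(|\nabla e^{-tL}f|^2 + V|e^{-tL}f|^2\bigr)\,\mathrm{d}\mu = \langle L e^{-tL}f,\,e^{-tL}f\rangle,
\]
localised against suitable cut-off functions --- this is the standard Caccioppoli/integration-by-parts argument on a Riemannian manifold.

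The main obstacle is the transfer step in the previous paragraph: getting Davies–Gaffney estimates for the gradient and the $\sqrt{V}$-multiplier from the Gaussian bound on $e^{-tL}$, and then composing correctly with the weights $\sup(2^j r/\sqrt{t},\sqrt{t}/(2^j r))^\beta$ and the annular exponential factor in \eqref{chap5-LpL2gradient}. Once these $L^1$–$L^2$ off-diagonal estimates are established, Theorem \ref{chap5-theo-G-ppetit} applies with $p_0 = 1$ and delivers the boundedness of $\G_L$ on $L^q$ for all $q \in (1,2]$, closing the proof of the corollary.
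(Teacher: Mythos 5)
Your proposal is correct and follows exactly the paper's route: Theorem \ref{chap5-theogrand} for $p\in[2,\infty)$ and Theorem \ref{chap5-theo-G-ppetit} with $p=1$ for the range $(1,2]$, the only work being the verification of the $L^1$--$L^2$ off-diagonal estimates under \eqref{chap5-doubling} and \eqref{chap5-Gaussian}. The paper simply asserts that these estimates hold in this setting (citing the standard literature), whereas you sketch the verification via heat-kernel domination and the composition trick; that sketch is sound.
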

\begin{proof}
Assume that $M$ satisfies the doubling volume property \eqref{chap5-doubling} and that the heat kernel associated with $\Delta$ satisfies the Gaussian upper estimate \eqref{chap5-Gaussian}. Then $\{ \sqrt{t }\nabla e^{-tL} \}$ and $ \{ \sqrt{t} \sqrt{V} e^{-tL} \}$ both satisfy $L^p-L^2$  estimates for all $p \in [1,2]$. Hence, $\G_L$ is bounded on $L^p$ for all $p \in (1,2]$ by Theorem \ref{chap5-theo-G-ppetit}. The case $p \in (2,+\infty)$ comes from Theorem \ref{chap5-theogrand}.
\end{proof}

In the case of Schrödinger operator with signed potential $ L = \Delta + V^+ - V^-$, we can state similar results. The conical vertical square functional for $L$ is defined by
$$\G_L(f)(x) = \left( \int_0^\infty \int_{B(x,t^{1/2})} |\nabla e^{-tL} f(y)|^2 + |V| |e^{-tL}f(y)|^2 \frac{\mathrm{d}y \mathrm{d}t}{Vol(y,t^{1/2})} \right)^{1/2}.$$

\begin{theorem}\label{chap5-theo4-8}Assume that $M$ satisfies the doubling property \eqref{chap5-doubling}. Suppose that $V^-$ is subcritical with respect to $\Delta + V^+$, that is there exists $\alpha \in (0,1)$ such that for all smooth and compactly supported function $f$,
\begin{equation}\label{chap5-souscritique}\int_M V^- f^2 \mathrm{d}x \leq \alpha \int_M V^+ f^2 +|\nabla f|^2 \mathrm{d}x.  \end{equation}
Then, \begin{enumerate}
\item $\G_L$ is bounded on $L^p$ for all $p \in [2, \infty)$.
\item Assume in addition that the kernel associated with $e^{-t\Delta}$ satisfies the Gaussian upper estimate \eqref{chap5-Gaussian}. If $N \leq 2$, then $\G_L$ is bounded for all $p \in (1,+\infty)$. If $N > 2$, set $p'_0 = \frac{2}{1-\sqrt{1-\alpha}}\frac{N}{N-2}$. Then $\G_L$ is bounded for all $p \in ({p_0},+\infty)$.\end{enumerate}
\end{theorem}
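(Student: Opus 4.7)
The plan is to adapt the proofs of Theorem~\ref{chap5-theogrand} and Theorem~\ref{chap5-theo-G-ppetit} to the signed-potential setting, replacing $V$ by $|V|$ throughout and using the subcriticality \eqref{chap5-souscritique} to absorb the negative part. First, for the $L^2$-boundedness, subcriticality gives that the quadratic form $\mathfrak{a}(f,f) = \int |\nabla f|^2 + V^+ f^2 - V^- f^2$ satisfies $\mathfrak{a}(f,f) \geq (1-\alpha) \int |\nabla f|^2 + V^+ f^2$, so $L$ is non-negative self-adjoint on $L^2$. Writing $|V| = V^+ + V^-$ and applying \eqref{chap5-souscritique} once more gives $\int |\nabla g|^2 + |V| g^2 \leq \tfrac{1+\alpha}{1-\alpha} \langle Lg, g\rangle$; inserting $g = e^{-tL}f$, switching the order of integration as in Proposition~\ref{chap5-theo-G-pgrand}, and using $\int_0^\infty \langle L e^{-tL}f, e^{-tL}f\rangle\,dt = \tfrac{1}{2}\|f\|_2^2$ yields the $L^2$-boundedness with a constant depending only on $\alpha$.

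For part (1), the remaining ingredient is Davies--Gaffney estimates for $\sqrt{t}\nabla e^{-tL}$ and $\sqrt{t}\sqrt{|V|}\,e^{-tL}$, which are known under subcriticality (cf.\ \cite{Assaad-Ouhabaz}): Davies' perturbation argument produces \eqref{chap5-DG} for $e^{-tL}$, and Caccioppoli-type inequalities then give the versions involving $\nabla$ and $\sqrt{|V|}$. With these at hand, the proof of Theorem~\ref{chap5-theogrand} applies verbatim with $\Gamma \in \{\nabla, \sqrt{|V|}\}$: decomposing $f = f\chi_{4B} + \sum_{j\ge 2} f\chi_{C_j}$ and combining $L^2$-boundedness (local part) with Davies--Gaffney (annular parts) shows that $f \mapsto t \Gamma e^{-t^2 L} f$ sends $L^\infty$ into $T_2^\infty$; interpolation with the $L^2$-boundedness produces the $L^p \to T_2^p$ bound for every $p \in [2, \infty)$, which reformulates as the boundedness of $\G_L$ on $L^p$.

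For part (2), under the Gaussian estimate \eqref{chap5-Gaussian} together with subcriticality, \cite{Assaad-Ouhabaz} provides $L^q$--$L^2$ off-diagonal estimates \eqref{chap5-LpL2gradient} for both $\sqrt{t}\nabla e^{-tL}$ and $\sqrt{t}\sqrt{|V|}\,e^{-tL}$ in the range $q \in (p_0, 2]$, with $p_0$ the dual of the exponent $p_0'$ from the statement (and in all of $(1,2]$ when $N \le 2$). Feeding these into Theorem~\ref{chap5-theo-G-ppetit} yields boundedness of $\G_L$ on $L^q$ for $q \in (p_0, 2]$, which combined with part (1) covers the whole range $(p_0, \infty)$.

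The hard part will be pinning down the admissible range for the off-diagonal estimates, particularly for the family $\sqrt{t}\sqrt{V^-}\,e^{-tL}$: these rely on controlling $V^-$ by the full operator $L$ rather than just by $V^+$, and the subcriticality constant $\alpha$ enters precisely at this step, producing the threshold $p_0' = \frac{2}{1-\sqrt{1-\alpha}}\cdot\frac{N}{N-2}$. Once the off-diagonal estimates are quoted correctly from \cite{Assaad-Ouhabaz}, the rest of the argument is a direct repetition of the proofs of Theorems~\ref{chap5-theogrand} and~\ref{chap5-theo-G-ppetit}.
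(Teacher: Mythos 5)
Your proposal is correct and follows essentially the same route as the paper: quote the Davies--Gaffney and $L^p$--$L^2$ off-diagonal estimates for $\{\sqrt{t}\nabla e^{-tL}\}$ and $\{\sqrt{t}|V|^{1/2}e^{-tL}\}$ from \cite{Assaad-Ouhabaz} under subcriticality, and then rerun the proofs of Theorems~\ref{chap5-theogrand} and~\ref{chap5-theo-G-ppetit} verbatim with $\Gamma\in\{\nabla,\sqrt{|V|}\}$. The only difference is that you spell out the $L^2$ step with the explicit constant $\tfrac{1+\alpha}{1-\alpha}$, which the paper leaves implicit; that detail is correct and harmless.
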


\begin{proof} Let $p$ be in $(1, 2]$ if $N \leq 2$ or in $({p_0},2]$ otherwise. In \cite{Assaad-Ouhabaz} the authors prove that, under the assumptions of the theorem, both $\{\sqrt{t} \nabla e^{-tL} \}$ and $\{\sqrt{t} |V|^{1/2} e^{-tL} \}$ satisfy Davies-Gaffney estimates \eqref{chap5-DG} and off-diagonal estimates \eqref{chap5-LpLq2}. The same proof as in the case of a non-negative potential applies and gives the boundedness of $\G_L$.
\end{proof}

\section{Generalized conical square functions associated with Schrödinger operators}

In this section, we introduce generalized conical square functions for Schrödinger operators $L= \Delta + V$ with $0 \leq V \in L^1_{loc}$. Let   $F$ be an holomorphic function in $H^\infty(\Sigma(\mu))$, with $ \Sigma(\mu)= \{ z \neq 0, |arg(z)| < \mu \}$ for some $\mu \in (\mu_p,\pi/2)$. We have already mentioned and used that $L$ has a bounded holomorphic functional calculus with angle $\mu \in (\mu_p = \arcsin |\frac{2}{p} - 1|, \pi/2 )$ on $L^p(M)$ for $p \in (1,+\infty)$. In particular, $F(L)$ is a bounded operator on $L^p(M) $ for $F \in H^\infty(\Sigma(\mu))$. We define $\G^F_{L}(f)$ by

$$\G^F_{L} (f)(x) = \left( \int_0^\infty \int_{B(y,t^{1/2})} |\nabla F (tL) f(y)|^2 + V |F(tL) f(y)|^2 \frac{\mathrm{d}t \mathrm{d}y}{Vol(y,t^{1/2})}\right)^{1/2}.$$
We start by the case $p=2$.
\begin{proposition}Assume there exist $C, \epsilon >0$ and $\delta > 1/2$ such that  $|F(z)|\leq \frac{C}{|z|^{\delta}}$ as $|z| \rightarrow + \infty$ and $|F'(z)|\leq \frac{C}{|z|^{1-\epsilon}}$ as $z \rightarrow 0$. Then
$\G_L^F$ is bounded in $L^2(M)$.
\end{proposition}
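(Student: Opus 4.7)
The plan is to imitate the $L^2$ computation for $\mathcal{G}_L$ done in Proposition \ref{chap5-theo-G-pgrand}, replacing $e^{-tL}$ by $F(tL)$ and using spectral calculus at the end. The first move is Fubini: since $y \in B(x,t^{1/2}) \Leftrightarrow x \in B(y,t^{1/2})$, integrating first in $x$ cancels the factor $Vol(y,t^{1/2})$ and gives
$$\|\G_L^F(f)\|_2^2 = \int_0^\infty \int_M \bigl(|\nabla F(tL)f(y)|^2 + V(y)|F(tL)f(y)|^2\bigr)\,\mathrm{d}y\,\mathrm{d}t.$$

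Next, by the definition of the quadratic form of $L = \Delta + V$, the inner spatial integral equals $\langle L\,F(tL)f, F(tL)f\rangle = \|L^{1/2}F(tL)f\|_2^2$. Applying the spectral theorem to the non-negative self-adjoint operator $L$ on $L^2(M)$, with spectral measure $\mathrm{d}\mu_f(\lambda) := \mathrm{d}\langle E_\lambda f, f\rangle$, this becomes $\int_0^\infty \lambda |F(t\lambda)|^2 \,\mathrm{d}\mu_f(\lambda)$. Integrating in $t$, swapping by Fubini, and substituting $s = t\lambda$ (so $\mathrm{d}t = \mathrm{d}s/\lambda$) I get
$$\|\G_L^F(f)\|_2^2 = \int_0^\infty \Bigl(\int_0^\infty \lambda |F(t\lambda)|^2\,\mathrm{d}t\Bigr)\mathrm{d}\mu_f(\lambda) = \Bigl(\int_0^\infty |F(s)|^2\,\mathrm{d}s\Bigr)\|f\|_2^2.$$

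It then remains to check that the scalar integral $\int_0^\infty |F(s)|^2\,\mathrm{d}s$ is finite. At infinity, the hypothesis $|F(s)| \leq Cs^{-\delta}$ with $\delta > 1/2$ gives $|F|^2 \lesssim s^{-2\delta}$ with $2\delta > 1$, which is integrable on $(1,\infty)$. Near zero, the estimate $|F'(z)| \leq C|z|^{\epsilon-1}$ integrates to a bound $|F(s) - F(0^+)| \leq Cs^\epsilon/\epsilon$, so $F$ extends continuously (and boundedly) at $0$ along the positive real axis; alternatively this already follows from $F \in H^\infty(\Sigma(\mu))$. In either case $\int_0^1 |F(s)|^2\,\mathrm{d}s < \infty$.

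There is no serious obstacle here: the only point requiring minor care is justifying the interchange of Fubini and the identification $\int_M |\nabla u|^2 + V u^2 = \|L^{1/2}u\|_2^2$ for $u = F(tL)f$. This is valid because the hypotheses on $F$ ensure that $\lambda \mapsto \lambda^{1/2} F(t\lambda)$ is bounded on $\sigma(L) \subset [0,\infty)$ for each $t>0$, hence $F(tL)f$ lies in the form domain $D(L^{1/2})$ and all manipulations are justified by spectral calculus and the positivity of the integrands.
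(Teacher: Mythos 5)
Your proof is correct and follows the same route as the paper: the Fubini/averaging identity over $y\in B(x,t^{1/2})\Leftrightarrow x\in B(y,t^{1/2})$ reduces $\|\G_L^F(f)\|_2^2$ to $\int_0^\infty \|L^{1/2}F(tL)f\|_2^2\,\mathrm{d}t$. The only difference is that the paper then invokes the $L^2$ boundedness of this vertical functional as a citation (Cometx--Ouhabaz, Theorem 4.1), whereas you prove it directly via the spectral theorem and the finiteness of $\int_0^\infty |F(s)|^2\,\mathrm{d}s$; this is precisely the content of the cited result at $p=2$, so your argument is more self-contained but not essentially different.
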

\begin{proof}The boundedness of $$f \mapsto \left( \int_0^\infty |\nabla F(tL) f|^2 + V |F(tL) f|^2 \mathrm{d}t \right)^{1/2} $$ on $L^2(M) $ from \cite{Cometx-Ouhabaz} (Theorem 4.1) gives
\begin{align*}
\| \G_L^F (f) \|_2^2 &=\left \|  \left( \int_0^\infty \int_{B(y,t^{1/2})} |\nabla F (tL) f(y) |^2  + V |F(tL)f (y)|^2\frac{\mathrm{d}t \mathrm{d}y}{Vol(y,t^{1/2})}\right)^{1/2} \right\|_2^2 \\
&=   \left \|  \left( \int_0^\infty  |\nabla F (tL) f|^2 + V |F(tL)|^2 \mathrm{d}t \right)^{1/2} \right\|_2^2 \\
&\leq C  \|f\|^2_2.
\end{align*}
 \end{proof}
Recall that a family $\{T_i, i \in I \}$ of operators is $R$-bounded on $L^p$ if there exists $C>0$ such that for all $n \in \mathbb{N}$ and all $i_1,...,i_n \in I$ and for all $f_1,...,f_n$ in $L^p$, $$\left\| \left( \sum_{i=1}^n |T_i f_i|^2 \right)^{1/2}\right\|_p \leq C \left\| \left( \sum_{i=1}^n |f_i|^2 \right)^{1/2} \right\|_p.$$ It is known from \cite{Cometx-Ouhabaz} that the $R$-boundedness is linked with the boundedness on $L^p$ of the Littlewood-Paley-Stein functionals. We have

\begin{proposition}\label{chap5-theo5-1}
Given $p \in [2,+\infty)$ and $F \in H^\infty(\Sigma(\mu))$ with $\mu \in (\mu_p, \pi/2)$. Assume that there exist $C,\epsilon >0 $ and $ \delta> 1/2$ such that $|F(z)| \leq \frac{C}{|z|^{\delta}}$ as $|z| \rightarrow \infty$ and $|F'(z)|\leq \frac{C}{|z|^{1-\epsilon}}$ as $z \rightarrow 0$. If the families $\{ \sqrt{t} \nabla e^{-tL} \}$ and $\{ \sqrt{t} \sqrt{V} e^{-tL} \}$ are $R$-bounded on $L^p(M)$, then $\G_L^F$ is bounded on $L^p$. 
\end{proposition}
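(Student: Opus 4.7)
The plan is to reduce the desired bound to a boundedness property of the vertical generalized Littlewood--Paley--Stein functional
$$H_L^F(f)(x) := \left(\int_0^\infty |\nabla F(tL)f(x)|^2 + V|F(tL)f(x)|^2\, \mathrm{d}t\right)^{1/2},$$
and then to invoke the corresponding $L^p$-theorem from \cite{Cometx-Ouhabaz}.

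First I would mimic the argument of Proposition \ref{chap5-prop2-3}(1). Performing the substitution $t = s^2$ transforms $\G_L^F(f)(x)$ into $\sqrt{2}\,A(\Phi)(x)$ and $H_L^F(f)(x)$ into $\sqrt{2}\,\tilde{V}(\Phi)(x)$ for the same locally square-integrable function
$$\Phi(y,s) := \bigl(|s\nabla F(s^2L)f(y)|^2 + V(y)|sF(s^2L)f(y)|^2\bigr)^{1/2}$$
on $M \times \mathbb{R}_+$ (well defined at least for $f$ in a dense subspace such as $C_c^\infty(M)$, which is enough by density). Since $p \in [2,\infty)$, Proposition \ref{chap5-comparaison}(1) then yields
$$\|\G_L^F(f)\|_p = \sqrt{2}\,\|A(\Phi)\|_p \leq C\,\|\tilde{V}(\Phi)\|_p = C\,\|H_L^F(f)\|_p.$$

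The second step, which is the main obstacle, is to show that $H_L^F$ is bounded on $L^p$ under the $R$-boundedness hypotheses. This is exactly the content of the $L^p$-version of Theorem 4.1 of \cite{Cometx-Ouhabaz}: the decay $|F(z)| \leq C|z|^{-\delta}$ with $\delta > 1/2$ at infinity together with $|F'(z)| \leq C|z|^{-1+\epsilon}$ at zero allow one to represent $F(tL)$ via a Cauchy integral
$$F(tL) = \frac{1}{2\pi i}\int_{\partial \Sigma(\nu)} F(tz)\, (z-L)^{-1}\, \mathrm{d}z$$
for some $\nu \in (\mu_p, \mu)$ contained in the sector of bounded holomorphic functional calculus of $L$ on $L^p$. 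Differentiating the resolvent and re-expressing $\nabla(z-L)^{-1}$ and $\sqrt{V}(z-L)^{-1}$ via the semigroup (using e.g.\ $\lambda(z-L)^{-1} = -1+z(z-L)^{-1}$ combined with Laplace representations), one is reduced to controlling vector-valued integrals whose integrands are of the form $\sqrt{u}\nabla e^{-uL}$ and $\sqrt{u}\sqrt{V}\,e^{-uL}$ evaluated at parameters depending on $t$ and the contour. This is precisely the point where the $R$-boundedness of $\{\sqrt{t}\nabla e^{-tL}\}$ and $\{\sqrt{t}\sqrt{V}e^{-tL}\}$ on $L^p$ enters, and where the decay $\delta > 1/2$ and $\epsilon > 0$ guarantee the absolute convergence of the ensuing scalar integrals in $u/t$. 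Granted this step, combining with the first reduction gives $\|\G_L^F(f)\|_p \leq C\|f\|_p$ and completes the proof.
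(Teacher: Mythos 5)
Your proposal is correct and follows essentially the same route as the paper: the paper's proof also applies Proposition \ref{chap5-comparaison}(1) to dominate $\|\G_L^F(f)\|_p$ by the $L^p$ norm of the vertical functional, and then concludes by citing the $R$-boundedness hypothesis together with Theorem 4.1 of \cite{Cometx-Ouhabaz}. The only difference is that you additionally sketch the internal mechanism of that cited theorem, which the paper treats as a black box.
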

\begin{proof} By Proposition \ref{chap5-comparaison}, one has 
\begin{align*}
\left\| \G_L^F (f) \right\|_p &\leq C \left\|  \left( \int_0^\infty |\Gamma F (t\Delta) f|^2 + V | F (t\Delta) f|^2  \mathrm{d}t \right)^{1/2} \right\|_p \\
&\leq C \left\| f\right\|_p.
\end{align*}
The last inequality comes from the $R$-boundedness of  $\{ \sqrt{t} \Gamma e^{-t\Delta} \}$ on $L^p(M)$ for either $\Gamma = \nabla$ or $\Gamma = \sqrt{V}$ (see \cite{Cometx-Ouhabaz}, Theorem 4.1).
\end{proof}

\begin{remark}
Let $\Gamma$ be either $\nabla$ or the multiplication by $\sqrt{V}$.
\begin{enumerate}
\item It follows from \cite{Cometx-Ouhabaz} (Proposition 2.1) that the boundedness of the Riesz transform $\Gamma L^{-1/2}$ on $L^p$ implies the $R$-boundedness of  $\{ \sqrt{t} \Gamma e^{-t\Delta} \}$.

\item One can generalize Proposition \ref{chap5-theo5-1} as in \cite{Cometx-Ouhabaz}. Consider $h_1, ... , h_n$ bounded holomorphic functions on $\Sigma(\mu) = \{ z \neq 0 , |arg(z)| < \mu \}$. Under the assumptions of Theorem \ref{chap5-theo5-1} there exists $C>0$ such that for all $f_1,...,f_n \in L^p(M)$, 
\begin{multline*}\left\| \left( \int_0^\infty \int_{B(.,t^{1/2})} \sum_{i=1}^n |\nabla h_i(L) F(tL) f_i(y)|^2 + V |h_i(L) F(tL) f_i(y)|^2 \frac{\mathrm{d}t \mathrm{d}y}{Vol(y,t^{1/2})}   \right)^{1/2} \right\|_p\\
\leq C \left\| \left( \sum_{i=1}^n  |f_i|^2 \right)^{1/2} \right\|_p .\end{multline*}
\end{enumerate}
\end{remark}

We state another positive result concerning the boundedness of $\G_L^F$, assuming the function $F$ has sufficient decay at zero and at infinity. We start by giving Davies-Gaffney estimates for $F(t 
L)$. This lemma is inspired by Lemma 2.28 in \cite{Hofmann-Mayboroda-McIntosh} where a similar result is proven for $F(tL)$ instead of $\sqrt{t} \Gamma F(tL)$.
\begin{lemma}\label{chap5-lemme-ft}
Let $\mu > 0$. Let $F$ be an holomorphic function on a the sector $\Sigma(\mu)$ such that there exist $\tau, \sigma > 0$ such that for all $z \in \Sigma(\mu)$, $|F(z)| \leq C \frac{|z|^\tau}{1+|z|^{\tau + \sigma}}$ Then for all $f \in L^2(M)$ and all disjoint closed subsets $E$ and $G$ of $M$,
\begin{equation}\label{chap5-offdiag-m}
\| \sqrt{t} \Gamma F(tL) f \chi_E \|_{L^2(G) }\leq C \left(\frac{t}{d(E,G)^2} \right)^{\tau+1/2}  \|f\|_{L^2(E)}.
\end{equation}
Here $\Gamma$ is either $\nabla$ or the multiplication by $\sqrt{V}$.
\end{lemma}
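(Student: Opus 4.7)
The plan is to use the holomorphic functional calculus to write $F(tL)$ as a contour integral of resolvents of $L$, and combine it with sectorial Davies--Gaffney bounds for $\sqrt{|z|}\,\Gamma e^{-zL}$ in a complex neighbourhood of the positive real axis.

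First I would establish a sectorial Davies--Gaffney estimate: since $L=\Delta+V$ with $V\ge 0$ is non-negative self-adjoint on $L^2(M)$, the semigroup $e^{-zL}$ is holomorphic on $\Sigma(\pi/2)$, and a standard Davies perturbation together with the Cauchy formula yields
\begin{equation*}
\|\sqrt{|z|}\,\Gamma e^{-zL}(f\chi_E)\|_{L^2(G)}
\le C \exp\!\left(-c\cos(\arg z)\,\frac{d(E,G)^2}{|z|}\right)\|f\|_{L^2(E)}
\end{equation*}
uniformly in $z$ on any proper subsector of $\Sigma(\pi/2)$. Using the bounded holomorphic functional calculus of $L$, I would then represent
\begin{equation*}
F(tL) = \frac{1}{2\pi i}\int_{\partial\Sigma(\nu)}F(t\zeta)(\zeta-L)^{-1}\,d\zeta
\end{equation*}
for some $\nu\in(0,\mu)$; the decay of $F$ at $0$ and at $\infty$ ensures absolute convergence of the integral.

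The next step is to express the resolvent as a Laplace-type integral of the semigroup along a rotated ray: for $\zeta=re^{\pm i\nu}$, the identity $(\zeta-L)^{-1}=-\int_{\gamma_\zeta} e^{-w(L-\zeta)}\,dw$ along an appropriate contour $\gamma_\zeta$ converts $\sqrt{t}\,\Gamma(\zeta-L)^{-1}$ into an integral of $\sqrt{|w|}\,\Gamma e^{-wL}$ (times harmless weights) with $w$ remaining in $\Sigma(\pi/2-\varepsilon)$. Inserting the sectorial Davies--Gaffney bound from the first step together with the hypothesis $|F(t\zeta)|\le C(t|\zeta|)^{\tau}/(1+(t|\zeta|)^{\tau+\sigma})$, the quantity $\|\sqrt{t}\,\Gamma F(tL)(f\chi_E)\|_{L^2(G)}$ is controlled by a double integral; after the change of variable $u=t|\zeta|$ this reduces to a one-dimensional Laplace-type optimization producing the bound $(t/d(E,G)^2)^{\tau+1/2}\|f\|_{L^2(E)}$. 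The power $\tau$ comes from the vanishing of $F$ at the origin (which dominates when $t/d(E,G)^2$ is small), while the extra $1/2$ is contributed by the prefactor $\sqrt{t}$ attached to $\Gamma$.

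The main obstacle is the careful bookkeeping in the double integral: one must choose $\nu$ strictly between $0$ and $\mu$, handle the two branches of $\partial\Sigma(\nu)$ separately, rotate the inner Laplace contour so that the integrand is controlled by the sectorial Davies--Gaffney bound, and finally optimize in the resulting one-dimensional integral to produce exactly the exponent $\tau+1/2$ (and not $\tau$ or $\tau+1$). Once the contour has been set up correctly, the remaining computation is a routine Laplace integral estimate.
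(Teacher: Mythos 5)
Your proposal is correct and follows essentially the same route as the paper: the paper writes $\Gamma F(tL)f=\int_{\Gamma_0^{\pm}}\Gamma e^{-zL}f\,\eta_{\pm}(z)\,dz$ with $\eta_{\pm}(z)=\frac{1}{2i\pi}\int_{\gamma^{\pm}}e^{z\zeta}F(t\zeta)\,d\zeta$, which is exactly what your Cauchy-integral-over-resolvents combined with the Laplace representation of the resolvent yields after Fubini, and then it applies the same sectorial Davies--Gaffney bound for $\sqrt{z}\,\Gamma e^{-zL}$ on the rays $\Gamma_0^{\pm}$. The only cosmetic difference is that the paper proves the kernel bound $|\eta_{\pm}(z)|\leq C t^{\tau}/|z|^{\tau+1}$ first (splitting at $|\zeta|=1/t$, using the decay of $F$ at $0$ and $\infty$) and then does the one-dimensional Laplace integral in $z$, rather than keeping the double integral and changing variables as you suggest.
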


\begin{proof}
The functionnal calculus for $L$ on $L^2$ gives the representation formula
\begin{equation}\label{chap5-representation-formula}
 \Gamma F(tL) f = \int_{\Gamma_0^+} \Gamma e^{-zL} f \eta_+(z) dz + \int_{\Gamma_0^-} \Gamma e^{-zL} f  \eta_-(z) dz,\end{equation}where
$$\eta^{\pm}(z) = \frac{1}{2i \pi}\int_{\gamma^\pm}   e^{z \zeta} F(t \zeta)   d \zeta.$$
Here $\Gamma_0^{\pm} =\mathbb{R}_+ e^{ \pm i (\pi / 2 - \theta)}$ for some $\theta \in (0,\pi/2)$ and $\gamma^{\pm} = \mathbb{R}_+ e^{ \pm i \nu}$ for some $\nu < \theta$.
Under our assumption on $F$, we obtain 
\begin{align*}
|\eta^{\pm}| (z) &\leq  \int_{\gamma^\pm} |e^{\zeta z}| |F(t\zeta)| d \zeta \\
&\leq C \int_{\gamma^\pm} |e^{\zeta z}| \frac{ |t\zeta|^\tau}{1+ |t \zeta|^{\tau+\sigma}} d \zeta \\
&\leq C \left[ \int_{\zeta \in \gamma^\pm, |\zeta| \leq 1/t} |e^{\zeta z}| \frac{ |t\zeta|^\tau}{1+ |t \zeta|^{\tau+\sigma}} d \zeta  +  \int_{\zeta \in \gamma^\pm, |\zeta| > 1/t}  |e^{\zeta z}| \frac{ |t\zeta|^\tau}{1+ |t \zeta|^{\tau+\sigma}} d \zeta \right] \\
&:=C \left[ J_1 + J_2 \right]. 
\end{align*}
We bound 
\begin{align*}
J_1 &\leq C \int_{\zeta \in \gamma^\pm, |\zeta| \leq 1/t} e^{- \delta |z||\zeta|} \frac{ |t\zeta|^\tau}{1+ |t \zeta|^{\tau+\sigma}} d \zeta\\
&\leq C \frac{t^\tau}{|z|^{\tau+1}} \int_0^\infty e^{-\delta \rho} d \rho  \\
&\leq C \frac{t^\tau}{|z|^{\tau+1}}.
\end{align*}
Here $\delta \in (0,1)$ depends on $\theta$ and $\mu$. Besides,
\begin{align*}
J_2 &\leq C \int_{\zeta \in \gamma^\pm, |\zeta| > 1/t}  |z \zeta|^{-\tau-1} |t \zeta|^{-\sigma} d \zeta  \\
&\leq C \left( \frac{t}{|z|}\right)^{\tau+1} t^{-\tau-\sigma-1}
\int_{\zeta \in \gamma^\pm, |\zeta| > 1/t}  |\zeta|^{-\tau-\sigma-1} d \zeta \\
&\leq C \frac{t^\tau}{|z|^{\tau+1}}.
\end{align*}
Hence,
\begin{equation}\label{chap5-bilan-eta}
|\eta_\pm|(z) \leq C \frac{t^\tau}{|z|^{\tau+1}}.
\end{equation}
Then \eqref{chap5-representation-formula} and \eqref{chap5-bilan-eta} together give that for all $f$ in $L^2$ and all disjoints closed sets $E$ and $G$ in $M$,
\begin{equation*}
\| \Gamma F(tL) f \|_{L^2(G)} \leq C \left[ \int_{\Gamma_0^+} \| \Gamma e^{-zL}  f \|_{L^2(G)} \frac{t^\tau}{|z|^{\tau+1}} dz + \int_{\Gamma_0^-} \| \Gamma e^{-zL}  f \|_{L^2(G)} \frac{t^\tau}{|z|^{\tau+1}}dz  \right].
\end{equation*}

We bound the first term. The second is bounded by the same method. Davies-Gaffney estimates \eqref{chap5-DG} for $\{ \sqrt{z} \Gamma e^{-zL} \}$ give
\begin{align*}
\int_{\Gamma_0^+} \| \Gamma e^{-zL} f\|_{L^2(G)}   t^\tau |z|^{-\tau-1} dz &\leq C \left(\int_{\Gamma_0^+}   t^\tau |z|^{-\tau-3/2} e^{-c d(E,G)^2/|z|} dz \right) \|f \|_{L^2(E)} \\&\leq C t^\tau (d(E,G)^2)^{-\tau - 1/2} \left(\int_0^\infty   s^{-\tau-3/2} e^{-c/s} \mathrm{d}s \right) \|f \|_{L^2(E)} \\
&\leq \frac{C}{\sqrt{t}} \left( \frac{t}{d(E,G)^2} \right)^{\tau+1/2} \|f \|_{L^2(E)}.
\end{align*}
\end{proof}

As a consequence of these Davies-Gaffney estimates, we obtain the boundedness of generalized conical square functionals.
\begin{theorem}Assume that $M$ satisfies the doubling property \eqref{chap5-doubling}. Let $F$ be an holomorphic function on a sector $\Sigma(\mu) = \{z \neq 0, |arg(z)| < \mu\}$ such that for all $z$ in $\Sigma(\mu)$,
$|F(z)| \leq C \frac{|z|^\tau}{1 + |z|^{\tau+\delta}}$ for some $\tau >  (N-2)/4$ and $\delta > 1/2$, where $N$ is as in \eqref{chap5-doubling}.
Then $\G_L^F$ is bounded on $L^p$ for all $p \in [2,+\infty)$.
\end{theorem}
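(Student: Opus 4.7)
The plan is to follow the three-step strategy laid out in Section \ref{chap5-section-tentspaces}: establish $L^2$ boundedness, prove an $L^\infty \to T_2^\infty$ estimate for the underlying sublinear operator, and interpolate via tent spaces. After the change of variable $t = s^2$, boundedness of $\G_L^F$ on $L^p$ is equivalent to the $L^p \to T_2^p$ boundedness of $f \mapsto s\,\Gamma F(s^2 L) f$, where $\Gamma$ stands for either $\nabla$ or multiplication by $\sqrt{V}$. The case $p=2$ is already furnished by the preceding proposition (whose decay hypotheses on $F$ are even weaker than what we assume here), so the heart of the matter is the endpoint $p = \infty$.

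For the $L^\infty \to T_2^\infty$ step, I fix a ball $B$ of radius $r_B$ and decompose $f = f\chi_{4B} + \sum_{j \geq 2} f\chi_{C_j(B)}$. The local part $f\chi_{4B}$ is handled exactly as in the proof of Theorem \ref{chap5-theogrand}, by combining the $L^2$ vertical estimate with the doubling property. For each annular piece, I apply the Davies--Gaffney estimate of Lemma \ref{chap5-lemme-ft} with $E = C_j(B)$ and $G = B$, which yields
\begin{equation*}
\|s\,\Gamma F(s^2 L)(f\chi_{C_j})\|_{L^2(B)} \leq C \left(\frac{s^2}{4^j r_B^2}\right)^{\tau + 1/2} \|f\|_{L^2(C_j(B))}.
\end{equation*}
Squaring, integrating in $s \in (0, r_B)$ against $\mathrm{d}s/s$, using $\|f\|_{L^2(C_j(B))}^2 \leq \mu(C_j(B))\|f\|_\infty^2 \leq C\, 2^{jN} \mu(B) \|f\|_\infty^2$ from \eqref{chap5-doubling}, and then summing in $j$ via Minkowski's inequality, should give the desired $T_2^\infty$ bound.

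The main obstacle, and the precise reason for the hypothesis $\tau > (N-2)/4$, is that the decay supplied by Lemma \ref{chap5-lemme-ft} is only polynomial in $s^2/d(E,G)^2$ (unlike the Gaussian decay available for the heat semigroup), so it must be strong enough to beat the volume growth factor $2^{jN}$. Carrying out the $s$-integral produces $\int_0^{r_B} s^{4\tau+1}\,\mathrm{d}s \sim r_B^{4\tau+2}$, which cancels the weight $(4^j r_B^2)^{-(2\tau+1)}$ and leaves the $j$-th term of order $2^{j(N - 4\tau - 2)} \mu(B) \|f\|_\infty^2$. Summability forces $N - 4\tau - 2 < 0$, i.e.\ $\tau > (N-2)/4$, which is exactly our assumption. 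Once the $L^\infty \to T_2^\infty$ bound is secured, complex interpolation for tent spaces (Proposition \ref{chap5-interpolation}) against the $L^2$ estimate yields $L^p \to T_2^p$ for every $p \in [2, \infty)$, which is the claim.
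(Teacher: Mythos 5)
Your proposal follows the paper's proof essentially verbatim: the same $L^2$ step, the same decomposition $f = f\chi_{4B} + \sum_{j\geq 2} f\chi_{C_j(B)}$, the same application of Lemma \ref{chap5-lemme-ft} to each annulus, and the same bookkeeping showing that the $j$-th term is of order $2^{j(N-4\tau-2)}$, so that $\tau > (N-2)/4$ is exactly the summability threshold before interpolating via Proposition \ref{chap5-interpolation}. The argument is correct and matches the paper's.
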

\begin{proof}
The boundedness of $\G_L^F$ on $L^2$ follows from Theorem \ref{chap5-comparaison} and \cite{Cometx-Ouhabaz}, Theorem 4.1.
 Let $\Gamma$ be either $\nabla$ or the multiplication by $\sqrt{V}$. We use the same proof as for Theorem \ref{chap5-theogrand} to prove that $f \mapsto t \Gamma F(t^2L) f$ is bounded from $L^\infty$ to $T_2^\infty$.
Recall that the norm on $T_2^\infty$ is given by $$\| F \|_{T_2^\infty} = \left( \sup_B \frac{1}{\mu(B)}\int_B \int_0^{r_B} |F(x,t)|^2 \frac{\mathrm{d}x\mathrm{d}t}{t}\right)^{1/2}$$
where the supremum is taken over all balls and $r_B$ is the radius of $B$. Fix a ball $B$ and decompose $f = f \chi_{4B} + f \chi_{(4B)^c}$.
We start by dealing with $ f \chi_{4B} $. One has \begin{align*}
\frac{1}{\mu(B)} \int_B \int_0^{r_B} |t \Gamma F(t^2 L) f \chi_{4B} |^2 \frac{\mathrm{d}x \mathrm{d}t}{t} & \leq \frac{1}{\mu(B)}\int_M \int_0^{\infty} |t\nabla F(t^2 L) f \chi_{4B} |^2 \frac{\mathrm{d}x \mathrm{d}t}{t} \\
&\leq \frac{1}{\mu(B)} \left\| \left(\int_0^\infty | \Gamma  F(t^2 L) f \chi_{4B}|^2 t \mathrm{d}t \right)^{1/2} \right\|_2^2.
\end{align*}
The boundedness of $f \mapsto \left(\int_0^\infty |\Gamma  F(s L) f \chi_{4B}|^2 \mathrm{d}s \right)^{1/2}$ on $L^2$ and the doubling property \eqref{chap5-doubling} give 
\begin{align*}
\frac{1}{\mu(B)} \int_B \int_0^{r_B} |t\Gamma F(t^2 L) f \chi_{4B} |^2 \frac{\mathrm{d}x \mathrm{d}t}{t} &\leq  \frac{1}{2\mu(B)} \left\| \left(\int_0^\infty |\Gamma  F(s L) f \chi_{4B}|^2 \mathrm{d}s \right)^{1/2} \right\|_2^2  \\
&\leq \frac{C}{\mu(B)} \| f \chi_{4B}\|_2^2 \\&\leq C  \|f\|_\infty^2.
\end{align*}
We now deal with the non-local part $f \chi_{(4B)^c}$. We decompose $f \chi_{(4B)^c} = \sum_{j\geq 2} f \chi_{C_j}$, where $C_j = 2^{j+1} B \backslash 2^j B$. Lemma \ref{chap5-offdiag-m} and the doubling volume property \eqref{chap5-doubling} yield
\begin{align*}
&\left( \frac{1}{\mu(B)}\int_B |t\Gamma F(t^2 L) \sum_{j\geq 2} f \chi_{C_j} |^2 \mathrm{d}x \right)^{1/2} 
\\& \hspace{4cm}\leq \sum_{j\geq 2} \left( \frac{1}{\mu(B)}\int_B |t \Gamma F(t^2L)  f \chi_{C_j} |^2 \mathrm{d}x \right)^{1/2}
\\& \hspace{4cm}\leq C \sum_{j\geq 2} \frac{t^{2\tau+1}\mu(C_j)^{1/2}}{\mu(B)^{1/2} \mu(C_j)^{1/2} r^{2\tau+1}4^{j(\tau+1/2)}} \left( \int_{C_j} f^2 \mathrm{d}x \right)^{1/2}\\
& \hspace{4cm}\leq C \sum_{j\geq 2} \frac{2^{jN/2} t^{2\tau+1}}{\mu(C_j)^{1/2}r^{2\tau} 4^{j\tau}} \left( \int_{C_j} f^2 \mathrm{d}x \right)^{1/2} \\
& \hspace{4cm}\leq C \sum_{j\geq 2} \frac{2^{jN/2} t^{2\tau+1}}{r^{2\tau+1} 4^{j(\tau+1/2)}}  \| f\|_\infty \\
& \hspace{4cm}\leq C \frac{t^{2\tau+1}}{r^{2\tau+1}} \|f \|_\infty.
\end{align*}
The convergence of the sum comes from the choice $\tau > (N-2)/4$. Therefore,
\begin{align*}
\frac{1}{\mu(B)} \int_0^{r_B} \int_B |t\Gamma F(t^2 L) f_{\chi_{C_j}}|^2 \frac{\mathrm{d}x \mathrm{d}t}{t} &\leq C\|f \|_\infty^2 \int_0^{r_B} \frac{t^{4\tau+1}}{r^{4\tau+2}} \mathrm{d}t \\
&\leq C \|f\|_\infty^2.
\end{align*}
Hence $\| t \Gamma F(t^2 L) f \|_{T_2^\infty} \leq  C \| f \|_p.$ By interpolation, we obtain that $f \mapsto t \Gamma F(t^2 L) f$ is bounded from $L^p$ to $T_2^p$ for all $p>2$. This gives the boundedness of $\G_L^F$ on $L^p$. Indeed,
\begin{align*}
\G_L^F(f)(x) &= \left( \int_0^\infty \int_{B(x,t^{1/2})} |\nabla F(t L) f|^2 + V |F(tL) f|^2 \frac{\mathrm{d}y \mathrm{d}t}{Vol(y,t^{1/2})} \right)^{1/2} \\
&= \frac{1}{2} \left( \int_0^\infty \int_{B(x,s)} | s \nabla F(s^2 L) f|^2 + V |s F(s^2 L) f|^2\frac{\mathrm{d}y \mathrm{d}s}{s Vol(y,s)} \right)^{1/2}\\
&= \frac{1}{2} A(\Psi)(x)
\end{align*} where $\Psi(x,s) = \left( |s \Gamma F(s^2 L) f|^2 +  V |s F(s^2L) f|^2 \right)^{1/2}$.
Then $\| \G_L^F(f) \|_p = \frac{1}{2} \|\Psi\|_{T_2^p} \leq C \|f\|_p $. 
\end{proof}

\begin{remark}\begin{enumerate}
\item This result still holds replacing $F(tL)$ by $h(L) F(tL)$ where $h$ is holomorphic and bounded. Actually, for all $f$ in $L^p$ we have
 $$ \left\| \left( \int_0^\infty \int_{B(y,t^{1/2})} |\nabla h(L) F(tL) f |^2 + |\sqrt{V} h(L) F(tL) |^2 \frac{\mathrm{d}y \mathrm{d}t}{Vol(y,t^{1/2})} \right)^{1/2}\right\|_p \leq \|f \|_p. $$
\item If $V$ is a signed potential with subcritical negative part, we obtain the boundedness of $\G_L^F$ on $L^p$ for all $p \in (2, \infty)$ whereas the semigroup does not acts boundedly on $L^p$ for $p$ large enough. It follows from the fact that the family $\{\sqrt{z} \Gamma e^{-zL}\}$ satisfies Davies-Gaffney estimates \eqref{chap5-DG} under the assumption of subcriticality \eqref{chap5-souscritique} (see \cite{Assaad-Ouhabaz}).
\end{enumerate}
\end{remark}

\section{Study of $\vec{\G}$}\label{chap5-section6}
The vertical conical square function assiocitated with $\vec{\Delta}$ is defined by
$$\vec{\G}(\omega)(x) = \left( \int_0^\infty \int_{B(x,t^{1/2})} |d^* e^{-t\LF} \omega|_x^2 \frac{\mathrm{d}y \mathrm{d}t}{Vol(y,t^{1/2})}\right)^{1/2}. $$
In this section, we apply the same techniques as for $\G_L$ to obtain the boundedness of $\vec{\G}$. The following lemma, from \cite{Auscher-Russ-McIntosh}, says that $d^* e^{-t\vec{\Delta}}$ satisfies Davies-Gaffney estimates.
\begin{lemma}[\cite{Auscher-Russ-McIntosh}, Lemma 3.8]
The family $\sqrt{t} d^* e^{-t \vec{\Delta}}$ satisfies Davies-Gaffney estimates, that is for all closed sets $E$ and $F$ and for any differential form $\omega$ in $L^2$, \begin{equation}\label{chap5-DGformes}
\| d^* e^{-t\vec{\Delta}}  \omega \chi_E \|_{L^2(F)} \leq \frac{C}{\sqrt{t}}e^{-c d^2(E,F)/t }\| \omega\|_{L^2(E)}.
\end{equation}
\end{lemma}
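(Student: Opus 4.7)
The estimate is a classical Davies--Gaffney bound, and I would prove it in two steps: first establish a Davies--Gaffney estimate for the semigroup $e^{-t\LF}$ itself (without $d^*$) via the standard exponential perturbation trick of Davies, and then use analyticity together with a composition argument to insert the operator $d^*$ and the prefactor $1/\sqrt{t}$.

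\emph{Step 1: Davies--Gaffney for $e^{-t\LF}$.} Let $\phi$ be a bounded real-valued Lipschitz function on $M$ with $\|\nabla\phi\|_\infty \leq 1$ and fix $\rho \geq 0$. Consider the conjugated semigroup $T_t^\rho := e^{\rho\phi}\, e^{-t\LF}\, e^{-\rho\phi}$, which is well defined on $L^2$ since $e^{\pm \rho\phi}$ is bounded. Setting $u(t) := e^{-t\LF} e^{-\rho\phi}\omega$ and using the sesquilinear form
\begin{equation*}
\mathfrak{a}(\alpha,\beta) := \langle d\alpha, d\beta\rangle + \langle d^*\alpha, d^*\beta\rangle
\end{equation*}
associated with $\LF$, one computes
\begin{equation*}
\tfrac{1}{2}\tfrac{d}{dt}\|T_t^\rho \omega\|_2^2 = -\mathfrak{a}\bigl(u, e^{2\rho\phi}u\bigr).
\end{equation*}
Applying the Leibniz rules $d(fu) = df\wedge u + f\,du$ and $d^*(fu) = f\,d^*u - i_{\nabla f}u$, then Cauchy--Schwarz together with $|\nabla\phi| \leq 1$, one obtains a Grönwall inequality implying $\|T_t^\rho \omega\|_2 \leq e^{C\rho^2 t}\|\omega\|_2$. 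Specialising $\omega$ to be supported in $E$, taking $\phi$ a smooth bounded truncation of $d(\cdot, E)$, and optimising $\rho = d(E,F)/(2Ct)$ yields
\begin{equation*}
\|e^{-t\LF}\omega\chi_E\|_{L^2(F)} \leq C\, e^{-c\, d^2(E,F)/t}\,\|\omega\|_{L^2(E)}.
\end{equation*}

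\emph{Step 2: Inserting $d^*$ with the factor $1/\sqrt{t}$.} The spectral theorem applied to the self-adjoint operator $\LF$ combined with the identity $\|d^*\alpha\|_2^2 + \|d\alpha\|_2^2 = \langle \LF\alpha, \alpha\rangle$ gives the uniform bound $\|d^* e^{-s\LF}\|_{L^2\to L^2} \leq \|\LF^{1/2}e^{-s\LF}\|_{L^2\to L^2} \leq C/\sqrt{s}$. Factor
\begin{equation*}
d^* e^{-t\LF}\omega\chi_E = d^* e^{-(t/2)\LF}\bigl(e^{-(t/2)\LF}\omega\chi_E\bigr)
\end{equation*}
and split the target $L^2(F)$ norm according to the intermediate set $G := \{x : d(x,E) > d(E,F)/2\}$ and its complement $G^c$: on $G$, Step 1 applied between $E$ and $G$ followed by the uniform $L^2$ bound for $d^* e^{-(t/2)\LF}$ produces the desired decay with prefactor $1/\sqrt{t}$. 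For the contribution on $G^c$, one needs a Davies--Gaffney estimate for $d^* e^{-s\LF}$ itself; I would obtain it by retaining the term $\tfrac{1}{2}\int e^{2\rho\phi}|d^*u|^2$ in the form inequality of Step 1, yielding an integrated-in-$t$ bound on $\|e^{\rho\phi} d^* e^{-t\LF}e^{-\rho\phi}\omega\|_2^2$, and then converting it to a pointwise bound via Cauchy's integral formula applied to the analytic extension $z\mapsto e^{-z\LF}$ on a complex sector. Composing everything and optimising $\rho$ produces \eqref{chap5-DGformes}.

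\emph{Main obstacle.} The principal technical difficulty is the conjugation argument on $1$-forms: the Leibniz rule for $d^*$ involves the interior product $i_{\nabla\phi}$, so when $\phi$ is only Lipschitz one must be careful about the meaning of $\nabla\phi$ and control the cross terms in $\mathfrak{a}(u, e^{2\rho\phi}u)$. In addition, the truncation needed to replace the unbounded weight $d(\cdot, E)$ by a bounded Lipschitz function has to be carried out while keeping the constants independent of the truncation level, and the passage from an integrated-in-$t$ bound on $d^* e^{-t\LF}$ to a pointwise bound via the Cauchy formula requires that Davies--Gaffney holds uniformly for complex times in a sector --- which itself is obtained by repeating the Step~1 argument with $t$ replaced by $z$ in a sector.
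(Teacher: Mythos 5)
The paper offers no proof of this lemma: it is quoted verbatim from Auscher--McIntosh--Russ, where it is deduced from the first--order Hodge--Dirac operator $D=d+d^{*}$ (locality of $D$ gives off-diagonal bounds of arbitrary polynomial order for the resolvents $(I+itD)^{-1}$, which are then transferred to $\psi(tD)$ and in particular to $tDe^{-t^{2}D^{2}}$, whose components contain $td^{*}e^{-t^{2}\vec{\Delta}}$). Your route --- Davies' exponential perturbation applied to the form $\mathfrak{a}(\alpha,\beta)=\langle d\alpha,d\beta\rangle+\langle d^{*}\alpha,d^{*}\beta\rangle$ --- is a genuinely different and equally legitimate path, the one used for Schr\"odinger operators in the references of this paper; it avoids the first-order formalism at the cost of handling the commutators $[d,e^{\rho\phi}]=\rho e^{\rho\phi}\,d\phi\wedge\cdot$ and $[d^{*},e^{\rho\phi}]=-\rho e^{\rho\phi}\,i_{\nabla\phi}$ by hand. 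Your Step 1 is correct as written (the Lipschitz issue you flag is handled by smoothing $d(\cdot,E)$ and passing to the limit).

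Step 2 has two soft spots. First, the composition through the intermediate set $G$ is redundant: the contribution of $G^{c}$ forces you to prove a weighted (Gaffney) estimate for $d^{*}e^{-s\vec{\Delta}}$ anyway, and once you have $\|e^{\rho\phi}d^{*}e^{-t\vec{\Delta}}e^{-\rho\phi}\|_{2\to2}\leq C(t^{-1/2}+\rho)e^{C\rho^{2}t}$ you simply apply it between $E$ and $F$ and optimise in $\rho$; the splitting buys nothing. Second, ``Cauchy's formula converting the integrated bound to a pointwise one'' is not the right mechanism: an integrated bound $\int_{t/2}^{t}\|e^{\rho\phi}d^{*}u(s)\|_{2}^{2}\,\mathrm{d}s\leq Ce^{C\rho^{2}t}\|\omega\|_{2}^{2}$ does not yield a bound at time $t$ without some monotonicity. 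The standard repair is to run Step 1 for complex times in a sector, deduce $\|e^{\rho\phi}\vec{\Delta}e^{-t\vec{\Delta}}e^{-\rho\phi}\|_{2\to2}\leq Ct^{-1}e^{C\rho^{2}t}$ by Cauchy's formula, and then use, with $u=e^{-t\vec{\Delta}}e^{-\rho\phi}\omega$,
\begin{equation*}
\|e^{\rho\phi}du\|_{2}^{2}+\|e^{\rho\phi}d^{*}u\|_{2}^{2}
=\langle e^{\rho\phi}\vec{\Delta}u,e^{\rho\phi}u\rangle
-2\rho\langle e^{\rho\phi}du,e^{\rho\phi}d\phi\wedge u\rangle
+2\rho\langle e^{\rho\phi}d^{*}u,e^{\rho\phi}i_{\nabla\phi}u\rangle ,
\end{equation*}
absorbing the cross terms to get $\|e^{\rho\phi}d^{*}u\|_{2}^{2}\leq C(t^{-1}+\rho^{2})e^{C\rho^{2}t}\|\omega\|_{2}^{2}$; taking $\rho\sim d(E,F)/t$ then gives \eqref{chap5-DGformes}, since $(d(E,F)/t)e^{-cd^{2}(E,F)/t}\leq Ct^{-1/2}e^{-c'd^{2}(E,F)/t}$. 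With that replacement your argument closes.
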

This lemma implies the boundedness of $\vec{\G}$ on $L^p$ for all $p \in [2,+\infty)$.
\begin{theorem}Assume that $M$ satisfies the doubling volume property \eqref{chap5-doubling}, then $\vec{\G}$ is bounded on $L^p(\Lambda^1 T^* M)$ for all $p \in [2,+\infty)$.
\end{theorem}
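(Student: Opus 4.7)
The plan is to mimic exactly the proof of Theorem \ref{chap5-theogrand} for $\G_L$, replacing the family $\sqrt{t}\Gamma e^{-tL}$ by $\sqrt{t}d^* e^{-t\vec\Delta}$. The Davies-Gaffney estimate \eqref{chap5-DGformes} is precisely the ingredient that played the role of \eqref{chap5-DG} in that argument, so the strategy decomposes into (i) boundedness on $L^2$, (ii) boundedness of $\omega\mapsto t\, d^* e^{-t^2\vec\Delta}\omega$ from $L^\infty$ to $T_2^\infty$, (iii) interpolation via Proposition \ref{chap5-interpolation}.

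For step (i), by Fubini,
\begin{align*}
\|\vec\G(\omega)\|_2^2 &= \int_M\int_0^\infty \int_{B(y,t^{1/2})}|d^* e^{-t\vec\Delta}\omega(y)|^2\,\frac{\mathrm{d}x\,\mathrm{d}t\,\mathrm{d}y}{Vol(y,t^{1/2})} \\
&= \int_M\int_0^\infty |d^* e^{-t\vec\Delta}\omega(y)|^2\,\mathrm{d}t\,\mathrm{d}y,
\end{align*}
and using $\langle \vec\Delta\omega,\omega\rangle = \|d\omega\|_2^2+\|d^*\omega\|_2^2$ together with $-\tfrac{\mathrm{d}}{\mathrm{d}t}\|e^{-t\vec\Delta}\omega\|_2^2 = 2\langle\vec\Delta e^{-t\vec\Delta}\omega,e^{-t\vec\Delta}\omega\rangle$ and integration in $t$, this is bounded by $\tfrac{1}{2}\|\omega\|_2^2$.

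For step (ii), fix a ball $B$ of radius $r_B$ and split $\omega = \omega\chi_{4B}+\sum_{j\geq 2}\omega\chi_{C_j(B)}$. For the local piece, the $L^2$ boundedness established in (i) gives
$$\frac{1}{\mu(B)}\int_B\int_0^{r_B}|t\, d^* e^{-t^2\vec\Delta}(\omega\chi_{4B})|^2\frac{\mathrm{d}x\,\mathrm{d}t}{t}\leq \frac{C}{\mu(B)}\|\omega\chi_{4B}\|_2^2\leq C\|\omega\|_\infty^2.$$
For each annular piece, Davies-Gaffney \eqref{chap5-DGformes} combined with the doubling property $\mu(C_j(B))\leq C 2^{jN}\mu(B)$ yields, exactly as in the proof of Theorem \ref{chap5-theogrand},
$$\Bigl(\frac{1}{\mu(B)}\int_0^{r_B}\int_B |t\,d^* e^{-t^2\vec\Delta}(\omega\chi_{C_j})|^2\frac{\mathrm{d}x\,\mathrm{d}t}{t}\Bigr)^{1/2}\leq C\,2^{jN/2}\Bigl(\int_0^{r_B}e^{-c\,4^j r_B^2/t^2}\frac{\mathrm{d}t}{t}\Bigr)^{1/2}\|\omega\|_\infty,$$
and the geometric decay in $j$ from the exponential factor makes the sum over $j$ converge to a constant times $\|\omega\|_\infty$. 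This shows $\omega\mapsto t\,d^* e^{-t^2\vec\Delta}\omega$ sends $L^\infty$ into $T_2^\infty$.

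For step (iii), rewriting
$$\vec\G(\omega)(x) = \tfrac{1}{2}\,A\bigl(s\mapsto s\,d^* e^{-s^2\vec\Delta}\omega\bigr)(x)$$
with the substitution $t=s^2$ reduces the claim $\|\vec\G(\omega)\|_p\leq C\|\omega\|_p$ to the boundedness of $\omega\mapsto s\,d^* e^{-s^2\vec\Delta}\omega$ from $L^p$ into $T_2^p$. Step (i) gives the endpoint $p=2$ (using $T_2^2=L^2(M^+,\mathrm{d}y\,\mathrm{d}t/t)$ via Fubini) and step (ii) the endpoint $p=\infty$; complex interpolation of tent spaces (Proposition \ref{chap5-interpolation}) then covers every $p\in[2,\infty)$. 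I do not expect any real obstacle: since we only need Davies-Gaffney at the level of $L^2$ (rather than $L^p$--$L^2$ off-diagonal estimates), the proof goes through without any extra hypothesis beyond doubling, and the structural algebraic identity $\|d^* e^{-t\vec\Delta}\omega\|_2^2+\|d e^{-t\vec\Delta}\omega\|_2^2 = -\tfrac{1}{2}\tfrac{\mathrm{d}}{\mathrm{d}t}\|e^{-t\vec\Delta}\omega\|_2^2$ replaces the use of $L=\Delta+V$ in Proposition \ref{chap5-theo-G-pgrand}.
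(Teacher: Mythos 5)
Your proposal is correct and follows essentially the same route as the paper: $L^2$ boundedness by Fubini and the energy identity, the $L^\infty\to T_2^\infty$ bound via the $4B$/annuli decomposition using the Davies--Gaffney estimate \eqref{chap5-DGformes} and doubling, and complex interpolation of tent spaces. The only cosmetic difference is that the paper deduces the $L^2$ case by citing Proposition \ref{chap5-comparaison} together with the $L^2$ boundedness of the vertical functional, whereas you carry out the (equivalent) direct computation.
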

\begin{proof}
The proof is the same as for $\G_L$. We reproduce it for the sake of completeness. As for $\G_L$, Proposition \ref{chap5-comparaison} gives that $\vec{\G}$ is bounded on $L^2$ because the Littlewood-Paley-Stein  functional
$$ \omega \mapsto \left(\int_0^\infty |d^* e^{-t\LF} \omega|^2_x \mathrm{d}t \right)^{1/2} $$is bounded on $L^2$. We  show that $\omega \mapsto t d^* e^{-t^2 \vec{\Delta}} \omega $ is bounded from $L^\infty$ to $T_2^\infty$. By interpolation it is bounded from $L^p$ to $T_2^p$ for all $p>2$, what reformulates as the boundedness of $\vec{\G}$ on $L^p$. For interpolation of tent spaces, we refer to Lemma \ref{chap5-interpolation} which remains true in the case of tent spaces of differential forms.

Recall that the norm on $T_2^\infty$ si given by $$\| F \|_{T_2^\infty} =  \left( \sup_B \frac{1}{\mu(B)}\int_B \int_0^{r_B} |F(x,t)|^2 \frac{\mathrm{d}x\mathrm{d}t}{t} \right)^{1/2} $$
where the supremum is taken over all balls $B$ with radius $r_B$. Fix a ball $B$ and decompose $\omega = \omega \chi_{4B} + \omega \chi_{(4B)^c}$. One has
\begin{align*}
\frac{1}{\mu(B)} \int_B \int_0^{r_B} |t d^* e^{-t^2 \vec{\Delta}} \omega \chi_{4B} |^2 \frac{\mathrm{d}x \mathrm{d}t}{t} &\leq \frac{1}{\mu(B)} \left\| \left( \int_0^\infty |d^* e^{-t\LF} \omega \chi_{4B} |^2 \mathrm{d}t \right)^{1/2} \right\|_2^2 \\&\leq \frac{1}{2 \mu(B)} \| \omega \chi_{4B}\|_2^2 \\ &\leq C \|\omega\|_\infty^2.
\end{align*}
We decompose $\omega \chi_{(4B)^c} = \sum_{j\geq 2} \omega \chi_{C_j}$, where $C_j = 2^{j+1} B \backslash 2^j B$. Minkowski inequality and Davies-Gaffney estimates \eqref{chap5-DGformes} give
\begin{align*}
&\left(\frac{1}{\mu(B)}\int_0^{r_B}\int_B |t d^* e^{-t^2 \vec{\Delta}} \sum_{j \geq 2} \omega \chi_{C_j} |^2 \frac{\mathrm{d}x \mathrm{d}t}{t} \right)^{1/2} \\&\leq C \sum_{j \geq 2} \left( \int_0^{r_B} \frac{e^{\frac{- c 4^j r_B^2}{t^2}}\mu(C_j)}{\mu(B)\mu(C_j)} \int_{C_j} |\omega|^2 \frac{\mathrm{d}x \mathrm{d}t}{t}  \right)^{1/2}\\
&\leq C \sum_{j \geq 2}\left( \int_0^{r_B} \frac{2^{jN} e^{\frac{- c 4^j r_B^2}{t^2}}}{\mu(C_j)}  \int_{C_j} |\omega|^2 \frac{\mathrm{d}x \mathrm{d}t}{t} \right)^{1/2}\\
&\leq C \sum_{j \geq 2} \left( \int_0^{r_B} 2^{jN} e^{\frac{-c 4^j r_B^2}{t^2}} \frac{\mathrm{d}t}{t}\right)^{1/2}  \| \omega\|_\infty \\
&\leq C  \| \omega\|_\infty.
\end{align*}
Then $\| t d^* e^{-t^2 \LF} \omega \|_{T_2^\infty} \leq C \| \omega \|_\infty$.
By interpolation we obtain that $\omega \mapsto t d^* e^{-t^2 \LF} \omega$ is bounded from $L^p$ to $T_2^p$ for all $p \in [2,\infty]$, what reads as the boundedness of $\vec{\G}$ on $L^p$.
Indeed,
\begin{align*}
\vec{\G}(\omega)(x) &= \left( \int_0^\infty \int_{B(x,t^{1/2})} |d^* e^{-t\vec{\Delta}} \omega|^2\frac{\mathrm{d}y \mathrm{d}t}{Vol(y,t^{1/2})} \right)^{1/2} \\
&= \frac{1}{2}  \left( \int_0^\infty \int_{B(x,s)} | s d^* e^{-s^2\vec{\Delta}} \omega|^2\frac{\mathrm{d}y \mathrm{d}s}{s Vol(y,s)} \right)^{1/2}\\
&= \frac{1}{2} A(\Psi)(x)
\end{align*} where $\Psi(x,s) = s \nabla e^{-s^2 \vec{\Delta}} \omega$.
Therefore we have $\| \vec{\G}(\omega) \|_p = \frac{1}{2} \|\Psi\|_{T_2^p} \leq C \|\omega\|_p $. \end{proof}

These case $p \in (1,2)$ is more complicated. Following the proof of Theorem \ref{chap5-theo-G-ppetit}, we have the following result.

\begin{theorem}\label{chap5-SC-formes} Assume that $M$ satisfies the doubling property \eqref{chap5-doubling} and that the set $ \{ \sqrt{t} d^* e^{-t\vec{\Delta}} \}$ satisfies $L^p-L^2$ off-diagonal estimates \eqref{chap5-LpLq2} for some $p < 2$. Then $\vec{\G}$ if of weak type $(p,p)$ and is bounded on $L^q$ for all $ p < q \leq 2$.
\end{theorem}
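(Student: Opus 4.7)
The plan is to adapt the proof of Theorem \ref{chap5-theo-G-ppetit} essentially verbatim, replacing the operator $L$ on functions by $\vec{\Delta}$ on $1$-forms and $\sqrt{t}\nabla e^{-tL}$ by $\sqrt{t}d^{\ast}e^{-t\vec{\Delta}}$. Since $\vec{\G}$ is already bounded on $L^2$ (from the previous theorem together with the $L^2$-boundedness of the vertical Littlewood-Paley-Stein functional for $\vec{\Delta}$), the Marcinkiewicz interpolation theorem reduces the question to a weak type $(p,p)$ estimate. For this I would fix $\lambda>0$ and $\omega\in L^p(\Lambda^1 T^\ast M)$, and apply the $L^p$ Calder\'on-Zygmund decomposition (which works for form-valued functions exactly as for scalar ones since only the norm $|\omega|_x$ enters) to write $\omega = g + \sum_i b_i$ with balls $B_i$ of radii $r_i$ having the standard properties recalled in the proof of Theorem \ref{chap5-theo-G-ppetit}.

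Next, I would introduce $A_{r_i} := I - (I - e^{-r_i^2 \vec{\Delta}})^K$ for an integer $K$ to be chosen later, and split
\[
\mu\bigl(\{\vec{\G}(\omega)>\lambda\}\bigr) \leq I + II + III,
\]
corresponding to $g$, to $\sum_i A_{r_i} b_i$, and to $\sum_i (I-e^{-r_i^2\vec{\Delta}})^K b_i$. The term $I$ is bounded by $C\lambda^{-p}\|\omega\|_p^p$ from $L^2$-boundedness of $\vec{\G}$ and property (2) of the decomposition. For $II$, the key input is an $L^p$-$L^2$ off-diagonal bound for $A_{r_i}$, which follows from the hypothesised off-diagonal estimate for $\sqrt{t}d^\ast e^{-t\vec{\Delta}}$ combined with functional calculus in the same way as in \cite{Chen-Martell}; then duality against an $L^2$-function $\psi$ and the Hardy-Littlewood maximal inequality yields $II\leq C\lambda^{-p}\|\omega\|_p^p$.

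The term $III$ is the most technical and constitutes the main obstacle. Using Markov's inequality it reduces to bounding $\int_{M\setminus\bigcup 5B_i}\vec{\G}(\sum_i h_i)^2\, dx$ where $h_i = (I-e^{-r_i^2\vec{\Delta}})^K b_i$. One splits $\sum_i h_i$ into a local contribution (with $y\in 4B_i$) and a global one (with $y\in(4B_i)^c$). The local piece is controlled by using that $t>r_i$ whenever $B(y,t)$ meets the complement of the enlarged balls, together with the hypothesised $L^p$-$L^2$ off-diagonal estimates and the expansion of $(I-e^{-r_i^2\vec{\Delta}})^K$. For the global piece one tests against an element of norm one in $L^2(M^+, dy\,dt/t)$, decomposes dyadically into annuli $C_j(B_i)$, and applies the analogue of Lemma \ref{chap5-lemme-preuve-chen} (which is purely a consequence of $L^2$-functional calculus and off-diagonal estimates for $\sqrt{t}d^\ast e^{-t\vec{\Delta}}$, hence carries over verbatim to $\vec{\Delta}$) to obtain a bound of the form $\mu(B_i)^{1/2}2^{-2jK}$. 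Choosing $K>N/4$ ensures convergence of the sums in $j$, and the doubling property together with the Calder\'on-Zygmund decomposition properties then give $III\leq C\lambda^{-p}\|\omega\|_p^p$.

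The adaptation is mechanical once one observes that the only facts used about $\nabla e^{-tL}$ in the proof of Theorem \ref{chap5-theo-G-ppetit} are the $L^p$-$L^2$ off-diagonal estimates and the $L^2$-boundedness of the underlying vertical functional; both of these hold for $\sqrt{t}d^\ast e^{-t\vec{\Delta}}$ under the hypotheses of the theorem. The hard part is really the bookkeeping in the global estimate for $III$, where the choice of $K$ and the interaction between Davies-Gaffney-type decay and the doubling dimension $N$ must be carefully tracked.
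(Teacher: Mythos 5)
Your proposal matches the paper exactly: the paper gives no separate argument for this theorem, stating only that it follows by repeating the proof of Theorem \ref{chap5-theo-G-ppetit} with $\sqrt{t}\,d^* e^{-t\vec{\Delta}}$ in place of $\sqrt{t}\,\nabla e^{-tL}$, which is precisely the adaptation you describe. Your observation that the scalar proof only uses the $L^2$-boundedness of the functional and the $L^p$-$L^2$ off-diagonal estimates is the correct justification for why the transfer is mechanical.
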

As for Schrödinger operator, we can state positive results assuming smallness of the negative part of the Ricci curvature.

\begin{theorem}\label{chap5-ricci-subcritical-case}
Assume that $M$ satisfies the doubling property \eqref{chap5-doubling} and that the kernel associated with $\Delta$  satisfies a Gaussian upper estimate. Assume in addition that ${R}^-$ is subcritical with respect to $\nabla^* \nabla  + R^+$, that is there exists $\alpha \in (0,1)$ such that for all $\omega \in C^\infty_0(\Lambda^1 T^* M)$,
$$\int_M <R^- \omega, \omega> \mathrm{d}x \leq \alpha \int_M <R^+ \omega, \omega> +|\nabla \omega|^2 \mathrm{d}x. $$ 
If $N \leq 2$, then $\vec{\G}$ is bounded for all $p \in (1,+\infty)$. If $N > 2$, let $p'_0 = \frac{2}{1-\sqrt{1-\alpha}}\frac{N}{N-2}$. Then $\vec{\G}$ is bounded for all $p \in ({p_0},+\infty)$.
\end{theorem}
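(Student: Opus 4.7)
The plan is to reduce this theorem to two results already proved in this section. For $p\in[2,+\infty)$, the preceding theorem of Section~\ref{chap5-section6} gives boundedness of $\vec{\G}$ on $L^p$ using only the volume doubling property \eqref{chap5-doubling}, since the Davies-Gaffney estimates \eqref{chap5-DGformes} for $\sqrt{t}\,d^{*} e^{-t\vec{\Delta}}$ from \cite{Auscher-Russ-McIntosh} hold on every complete Riemannian manifold, regardless of the sign of the Ricci curvature. It therefore remains to establish boundedness of $\vec{\G}$ on $L^{q}$ for $q\in(p_0,2]$ (respectively $q\in(1,2]$ when $N\leq 2$), and then appeal to interpolation/reiteration to paste the two ranges together.

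For the range $q\in(p_0,2]$ I would invoke Theorem~\ref{chap5-SC-formes}: it suffices to establish $L^{p}-L^{2}$ off-diagonal estimates \eqref{chap5-LpLq2} for $\{\sqrt{t}\,d^{*} e^{-t\vec{\Delta}}\}$ for every $p$ slightly larger than $p_0$, and to let $p\downarrow p_0$. To derive these off-diagonal estimates I would follow the perturbation strategy of Magniez \cite{Magniez}, which is the Hodge-de Rham counterpart of the Schrödinger argument of Assaad-Ouhabaz \cite{Assaad-Ouhabaz} used in Theorem~\ref{chap5-theo4-8}. The Bochner formula $\vec{\Delta}=\nabla^{*}\nabla+R^{+}-R^{-}$ together with Duhamel's formula yields
\[
e^{-t\vec{\Delta}}\omega \;=\; e^{-t(\nabla^{*}\nabla+R^{+})}\omega \;+\;\int_{0}^{t}e^{-(t-s)\vec{\Delta}}\,R^{-}\,e^{-s(\nabla^{*}\nabla+R^{+})}\omega\,\mathrm{d}s .
\]
A Kato-type inequality dominates $|e^{-t(\nabla^{*}\nabla+R^{+})}\omega|$ by $e^{-t\Delta}|\omega|$, so the reference semigroup inherits the Gaussian upper estimate \eqref{chap5-Gaussian}, and in particular $L^{p}-L^{2}$ off-diagonal estimates for all $p\in[1,2]$. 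The subcriticality constant $\alpha$ controls the size of the iterated perturbation: summing the Duhamel series in $L^{p}$ converges exactly when $p>p_0$ (equivalently $p'<p_0'$), giving uniform $L^{p}$-boundedness of $e^{-t\vec{\Delta}}$ in that range.

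The main obstacle is upgrading the combined information (Kato domination for the reference semigroup plus uniform $L^p$-boundedness of the full semigroup) to $L^p-L^2$ off-diagonal estimates on the \emph{composed} operator $\sqrt{t}\,d^{*} e^{-t\vec{\Delta}}$, since Kato-type domination does not directly apply to $d^{*} e^{-t\vec{\Delta}}$. I would split $\sqrt{t}\,d^{*} e^{-t\vec{\Delta}}=\bigl(\sqrt{t/2}\,d^{*} e^{-(t/2)\vec{\Delta}}\bigr)\circ e^{-(t/2)\vec{\Delta}}$: Lemma~\ref{chap5-DGformes} gives the Davies-Gaffney decay $t^{-1/2}e^{-cd^{2}/t}$ at the $L^{2}\to L^{2}$ level for the first factor, while the second factor, uniformly bounded on $L^{p}$ for $p\in(p_0,p_0')$, converts $L^{p}$ data into $L^{2}$ after the usual annular decomposition of $B$ and $C_{j}(B)$ and use of the doubling property \eqref{chap5-doubling}; carefully tracking the exponential factors through the annuli produces exactly the estimate \eqref{chap5-LpLq2}. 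Feeding these bounds into Theorem~\ref{chap5-SC-formes} and combining with the $p\geq 2$ part of the previous theorem gives boundedness of $\vec{\G}$ on $L^{p}$ for $p\in(p_0,+\infty)$, and for $p\in(1,+\infty)$ when $N\leq 2$ since then $p_0'$ is infinite and no lower restriction on $p$ is imposed.
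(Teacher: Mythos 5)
Your overall reduction is exactly the paper's: for $p\in[2,\infty)$ use the preceding theorem of Section~\ref{chap5-section6} (which indeed needs only \eqref{chap5-doubling} and the Davies--Gaffney estimates of \cite{Auscher-Russ-McIntosh}, valid on any complete manifold), and for $p\in(p_0,2]$ feed $L^p$--$L^2$ off-diagonal estimates for $\{\sqrt{t}\,d^*e^{-t\vec{\Delta}}\}$ into Theorem~\ref{chap5-SC-formes}. The only difference is that the paper obtains those off-diagonal estimates by citing \cite{Chen-Magniez-Ouhabaz} (Theorem 4.6), whereas you sketch a re-derivation via Duhamel and Kato domination in the spirit of \cite{Magniez} and \cite{Assaad-Ouhabaz}.

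In that sketch there is one step that does not go through as stated: writing $\sqrt{t}\,d^*e^{-t\vec{\Delta}}=\bigl(\sqrt{t/2}\,d^*e^{-(t/2)\vec{\Delta}}\bigr)\circ e^{-(t/2)\vec{\Delta}}$ and claiming that the second factor, being \emph{uniformly bounded on $L^p$}, ``converts $L^p$ data into $L^2$.'' Uniform $L^p$-boundedness gives no gain of integrability; to land in $L^2$ with the correct volume factor $\mu(B)^{-(1/p-1/2)}$ in \eqref{chap5-LpLq2} you need genuine $L^p$--$L^2$ off-diagonal estimates for $e^{-t\vec{\Delta}}$ itself. These are obtained in \cite{Chen-Magniez-Ouhabaz} by combining the uniform $L^p$-boundedness (from the Duhamel/subcriticality argument) with an $L^p\to L^2$ smoothing bound coming from domination of $|e^{-t\vec{\Delta}}\omega|$ by a Schrödinger semigroup on functions with subcritical potential, to which the Gaussian estimate \eqref{chap5-Gaussian} and \eqref{chap5-doubling} apply via \cite{Assaad-Ouhabaz}; an interpolation between this smoothing and the $L^2$ Davies--Gaffney decay then yields \eqref{chap5-LpLq2}. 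Once that ingredient is in place, the composition with the $L^2$ Davies--Gaffney estimates for $\sqrt{t}\,d^*e^{-t\vec{\Delta}}$ works as you describe, and the rest of your argument (letting $p\downarrow p_0$, and the degenerate case $N\leq 2$) is fine.
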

\begin{proof} The Gaussian upper estimate \eqref{chap5-Gaussian}, the doubling volume property \eqref{chap5-doubling} together with the subcriticality condition imply that $\sqrt{t} d^* e^{-t\vec{\Delta}}$ satisfies the $L^p-L^2$ estimates \eqref{chap5-LpLq2} (see \cite{Chen-Magniez-Ouhabaz}, Theorem 4.6). We apply Theorem \ref{chap5-SC-formes} to conclude.
\end{proof}

\section{Conical square functionals associated with the Poisson semigroup}
In \cite{Auscher}, the authors also introduce the conical square functionals associated for the Poisson semigroup associated with divergence form operators on $\mathbb{R}^d$. For a Schrödinger operator $L = \Delta + V$ with a potential $0 \leq V \in L^1_{loc}$, we define similar functionals by
$$P_L(f)(x) = \left( \int_0^\infty \int_{B(x,t)} |\nabla_{t,y} e^{-tL^{1/2}} f|^2  + V |e^{-tL^{1/2}} f|^2 \frac{ t \mathrm{d}t \mathrm{d}y}{Vol(y,t)} \right)^{1/2}.$$
We denote by $P_{L,t}$ the time derivative part of $P$ and $P_{L,x}$ the gradient part. If $V=0$, we denote them respectively by $P, P_t$ and $P_x$. 
\begin{align*}
P_{L,x}(f)(x)&= \left( \int_0^\infty \int_{B(x,t)} |\nabla_y e^{-tL^{1/2}} f|^2 + V |e^{-tL} f|^2 \frac{ t \mathrm{d}t \mathrm{d}y}{Vol(y,t)} \right)^{1/2},\\
P_{L,t}(f)(x) &= \left( \int_0^\infty \int_{B(x,t)} \left|\frac{\partial}{\partial t} e^{-tL^{1/2}} f\right|^2 \frac{ t \mathrm{d}t \mathrm{d}y}{Vol(y,t)} \right)^{1/2}.
\end{align*}
We ask whether $P_L$ is bounded or not on $L^p$. We start by the case $p=2$.

\begin{proposition}
$P_L$ is bounded on $L^2(M)$.
\end{proposition}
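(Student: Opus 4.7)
The plan is to mimic the proof of Proposition \ref{chap5-theo-G-pgrand} for $\G_L$: use Fubini to swap the order of integration over $x$ and $y$, which eliminates the conical truncation via the identity $\int_{B(y,t)} dx = Vol(y,t)$, reducing the problem to a vertical square function computation on $L^2$.

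More precisely, I would first expand $|\nabla_{t,y} e^{-tL^{1/2}} f|^2 = |\partial_t e^{-tL^{1/2}} f|^2 + |\nabla_y e^{-tL^{1/2}} f|^2$ and write
\begin{align*}
\|P_L(f)\|_2^2 &= \int_M \int_0^\infty \int_{B(x,t)} \bigl(|\nabla_{t,y} e^{-tL^{1/2}} f|^2 + V|e^{-tL^{1/2}} f|^2 \bigr) \frac{t\, \mathrm{d}y\, \mathrm{d}t\, \mathrm{d}x}{Vol(y,t)} \\
&= \int_0^\infty \int_M \bigl(|\partial_t e^{-tL^{1/2}} f|^2 + |\nabla e^{-tL^{1/2}} f|^2 + V|e^{-tL^{1/2}} f|^2\bigr) \, t\, \mathrm{d}y\, \mathrm{d}t,
\end{align*}
where I have interchanged the $\mathrm{d}x$ and $\mathrm{d}y$ integrals using the symmetry $y \in B(x,t) \iff x \in B(y,t)$.

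Next, I would handle the three pieces via functional calculus. The time derivative satisfies $\partial_t e^{-tL^{1/2}} = -L^{1/2} e^{-tL^{1/2}}$, so $\|\partial_t e^{-tL^{1/2}} f\|_2^2 = \|L^{1/2} e^{-tL^{1/2}} f\|_2^2$. For the spatial and potential parts, the quadratic form $\mathfrak{a}(u,u) = \int_M |\nabla u|^2 + V|u|^2 \, \mathrm{d}y$ associated with $L$ gives
\[
\int_M |\nabla e^{-tL^{1/2}} f|^2 + V|e^{-tL^{1/2}} f|^2 \, \mathrm{d}y = \langle L e^{-tL^{1/2}} f, e^{-tL^{1/2}} f\rangle = \|L^{1/2} e^{-tL^{1/2}} f\|_2^2.
\]
Combining both identities yields
\[
\|P_L(f)\|_2^2 = 2 \int_0^\infty t\, \|L^{1/2} e^{-tL^{1/2}} f\|_2^2 \, \mathrm{d}t.
\]

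Finally, the spectral theorem for $L$ reduces this to the scalar computation $\int_0^\infty t\, \lambda e^{-2t\sqrt{\lambda}} \, \mathrm{d}t = \tfrac{1}{4}$ (uniform in $\lambda > 0$, obtained by the substitution $u = 2t\sqrt{\lambda}$ and $\int_0^\infty u e^{-u}\, \mathrm{d}u = 1$). Integrating against the spectral measure of $f$ gives $\|P_L(f)\|_2^2 \leq \tfrac{1}{2}\|f\|_2^2$. There is no serious obstacle here; the only minor care needed is to note that the integrand vanishes on $\ker L$, so the bound holds for all $f \in L^2(M)$.
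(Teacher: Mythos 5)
Your proof is correct and follows essentially the same route as the paper: Fubini to cancel the conical average against $Vol(y,t)$, the quadratic-form identity $\int_M |\nabla u|^2 + V|u|^2 = \|L^{1/2}u\|_2^2$, and the spectral theorem to evaluate $\int_0^\infty t\,\lambda e^{-2t\sqrt{\lambda}}\,\mathrm{d}t$. In fact your version is slightly more careful than the paper's, which compresses the last step into a time-derivative identity and ends up with the constant $2$ rather than the correct $\tfrac12$ (and omits the remark about $\ker L$).
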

\begin{proof}
One has 
\begin{align*}
\| P_L(f) \|_2^2 &= \int_M \int_0^\infty \int_{y\in B(x,t)} |\nabla_y e^{-tL^{1/2}} f|^2 + \left|\frac{\partial}{\partial t} e^{-tL^{1/2}} f \right|^2 + V |e^{-tL^{1/2}}f |^2 \frac{t \mathrm{d}t \mathrm{d}y \mathrm{d}x}{Vol(y,t)} \\
&=\int_M \int_0^\infty |\nabla_y e^{-tL^{1/2}} f|^2 + \left|\frac{\partial}{\partial t} e^{-tL^{1/2}} f\right|^2 + V |e^{-tL^{1/2}} f|^2 t \mathrm{d}y \mathrm{d}t \\
&= 2 \int_0^\infty \frac{\partial}{\partial t} \| e^{-tL^{1/2}} f\|^2_2 \mathrm{d}t \\
&= 2 \| f\|_2^2.
\end{align*}
\end{proof}
\begin{remark}
The pointwise equality $P_L(f) = (P_{L,x}^2(f) + P_{L,t}(f))^{1/2}$ gives that $P_{L,t}$ and $P_{L,x}$ are bounded on $L^2$.
\end{remark}

In order to study the case $p \in [2,+\infty)$, we compare $P_L$ and $\G_L$. We start by the following technical lemma concerning the volume of the balls.

\begin{lemma}\label{chap5-lemme-derivation}
Assume that $M$ satisfies the volume doubling property \eqref{chap5-doubling}, then 
$|\nabla_{t,y} Vol(y,t)| \leq C t^{-1} Vol(y,t).$
\end{lemma}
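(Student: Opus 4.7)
The plan is to bound the two components of the gradient, $\partial_t Vol(y,t)$ and $\nabla_y Vol(y,t)$, separately, and in each case reduce to bounding the surface measure of the sphere $\partial B(y,t)$, which the doubling property will control.

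For the $t$-component, the function $t \mapsto Vol(y,t) = \int_M \chi_{\{d(\cdot,y)<t\}}\,dV$ is non-decreasing and the coarea formula gives
\[
\partial_t Vol(y,t) = \mathrm{Area}(\partial B(y,t)) \quad \text{for a.e.\ } t>0.
\]
For the $y$-component, I use that the Riemannian distance $x \mapsto d(x,y)$ is a.e.\ differentiable in $y$ with $|\nabla_y d(x,y)|\le 1$. Differentiating $Vol(y,t) = \int_M \chi_{\{d(\cdot,y)<t\}}\,dV$ under the integral sign gives
\[
|\nabla_y Vol(y,t)| \le \mathrm{Area}(\partial B(y,t)).
\]
Thus everything reduces to the claim $\mathrm{Area}(\partial B(y,t)) \leq Ct^{-1}Vol(y,t)$.

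To obtain this last bound I invoke the doubling property \eqref{chap5-doubling}. By the coarea formula applied in the other direction,
\[
\int_t^{2t} \mathrm{Area}(\partial B(y,s))\,ds = Vol(y,2t) - Vol(y,t) \le (C_D-1)\,Vol(y,t),
\]
where $C_D$ is the doubling constant. Combined with the comparability $Vol(y,s) \asymp Vol(y,t)$ for $s \in [t,2t]$ (another application of doubling), this yields the bound $\mathrm{Area}(\partial B(y,s)) \le C\, Vol(y,s)/s$ in the averaged sense that is used in Lemma~\ref{chap5-lemme-derivation}.

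The main obstacle is that, as stated, the estimate asks for a pointwise bound on $|\nabla_{t,y} Vol(y,t)|$, whereas the coarea-plus-doubling argument yields only an averaged bound on the sphere area. In the applications that follow, however, the lemma is used only inside integrals over $t$, so this averaged form is exactly what is needed; alternatively one may replace $Vol(y,t)$ by a suitable mollification $\widetilde{Vol}(y,t) = \frac{1}{t}\int_t^{2t} Vol(y,s)\,ds$, for which the pointwise bound holds directly via the same doubling estimate. Either reading makes the lemma a routine consequence of \eqref{chap5-doubling} combined with the coarea formula and the Lipschitz character of the Riemannian distance.
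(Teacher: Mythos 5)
Your argument is, at bottom, the same as the paper's. The paper handles the $t$-derivative by bounding the difference quotient $\big(Vol(y,t+h)-Vol(y,t)\big)/h$, and the spatial part by the inclusion $B(z,t)\subset B\big(y,t+d(y,z)\big)$, which reduces it to that same radial difference quotient; your coarea/Lipschitz-distance formulation of the two reductions is a smoother way of saying the same thing, and both versions funnel everything into the single estimate $Vol(y,t+h)-Vol(y,t)\leq C\,(h/t)\,Vol(y,t)$ for $0<h\leq t$.

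The obstacle you flag at the end is genuine. The doubling property \eqref{chap5-doubling} gives $Vol(y,t+h)\leq C(1+h/t)^N Vol(y,t)$ with a constant $C$ that does not tend to $1$ as $h\to 0$, hence only $Vol(y,t+h)-Vol(y,t)\leq \big(C(1+h/t)^N-1\big)Vol(y,t)$, whose right-hand side remains of order $(C-1)\,Vol(y,t)$ for small $h$; this gives no pointwise control of the derivative, only the averaged bound $\int_t^{2t}\partial_s Vol(y,s)\,\mathrm{d}s\leq (C-1)Vol(y,t)$ that you wrote down. A rotationally symmetric surface whose sphere length has tall narrow spikes shows the pointwise bound can genuinely fail while doubling holds. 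You should be aware that the paper's own proof does not supply the missing idea: its first display asserts $Vol(y,t+h)-Vol(y,t)\leq C\big((1+h/t)^N-1\big)Vol(y,t)$, with the constant placed so that the bound vanishes as $h\to 0$; that is strictly stronger than \eqref{chap5-doubling} (it holds in $\mathbb{R}^n$, or under pointwise control of sphere areas, but not in general) and is not justified there. So you have located the actual weak point rather than missed a step. Note, however, that your two proposed repairs (using the bound only in $t$-averaged form, or mollifying $Vol$) prove a modified statement, not the lemma as written; since the lemma is invoked pointwise inside the integrand in the proof of Lemma \ref{chap5-lemme 6}, before Young's inequality is applied, one would still have to check that the averaged or mollified version can be threaded through that argument.
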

\begin{proof}
We start by the time derivative part. For all $h>0$, one has by the doubling property \eqref{chap5-doubling}
\begin{align*}
Vol(y,t+h) - Vol(y,t) &\leq C \left( (\frac{t+h}{t})^N  - 1 \right)Vol(y,t) \\
&= C \left ( (1+\frac{h}{t})^N - 1 \right) Vol(y,t) \\
&\leq C h t^{-1} Vol(y,t).
\end{align*}
For the gradient part we have 
\begin{align*}
\frac{Vol(z,t)-Vol(y,t)}{d(z,y)} &\leq C \frac{ Vol(y,t+d(x,y)) - Vol(y,t) }{ d(z,y)}\\
&\leq C \left( (\frac{d(z,y)+t}{t})^N-1\right) \frac{ Vol(y,t)}{d(z,y)} \\
&\leq C \left( (\frac{d(z,y)+t}{t})^N-1\right) \frac{ Vol(y,t)}{d(z,y)} \\
&= C \left( (1+\frac{d(z,y)}{t})^N-1\right) \frac{ Vol(y,t)}{d(z,y)}
 \\&\leq C t^{-1} Vol(y,t).\end{align*}
\end{proof}
The following lemma from \cite{Auscher} will also be useful to study to compare $P_L$ and $\G_L$.
\begin{lemma}\label{chap5-lemme 6}
For any $f \in L^2$ and $x \in M$ one has 
\begin{multline}
P_L(f)(x) \leq C \left[ \left( \int_0^\infty \int_{B(x,2t)} \left| \left( e^{-t^2 L} - e^{-tL^{1/2}}\right) f \right|^2 \frac{\mathrm{d}y \mathrm{d}t}{t Vol(y,t)}\right)^{1/2} \right.\\
+ \left. \left(  \int_0^\infty \int_{B(x,2t)} |\nabla_{t,y} e^{-t^2 L} f|^2 + V |e^{-t^2 L} f|^2 \frac{ t \mathrm{d}y \mathrm{d}t}{Vol(y,t)} \right)^{1/2}  \right].
\end{multline}
\end{lemma}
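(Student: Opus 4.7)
The plan is to reduce the estimate to a Caccioppoli-type inequality for the difference $w = e^{-tL^{1/2}}f - e^{-t^2L}f$. I set $u(t,y) := e^{-tL^{1/2}}f(y)$ and $v(t,y) := e^{-t^2 L}f(y)$, so that $u$ satisfies the ``elliptic on the half-space'' equation $\partial_t^2 u = Lu$ on $M\times(0,\infty)$. From the pointwise inequality
\begin{equation*}
|\nabla_{t,y} u|^2 + V u^2 \leq 2 (|\nabla_{t,y} w|^2 + V w^2) + 2 (|\nabla_{t,y} v|^2 + V v^2),
\end{equation*}
together with the triangle inequality applied to the $L^2$-norm defining $P_L(f)(x)$, I split $P_L(f)(x)$ into a $v$-contribution, which is already the second term on the right-hand side of the statement, and a $w$-contribution. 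It thus remains to bound the latter by the first term on the right-hand side.

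To do this, I would establish a Caccioppoli-type inequality for $w$ of the form
\begin{equation*}
\int\!\!\int \eta^2 (|\partial_t w|^2 + |\nabla w|^2 + V w^2)\,\mathrm{d}y\,\mathrm{d}t \leq C \int\!\!\int (|\partial_t \eta|^2 + |\nabla \eta|^2) w^2\,\mathrm{d}y\,\mathrm{d}t + C \int\!\!\int \eta^2 (|\partial_t v|^2 + |\nabla v|^2 + V v^2)\,\mathrm{d}y\,\mathrm{d}t,
\end{equation*}
for every smooth $\eta \geq 0$ compactly supported in $M \times (0,\infty)$. The derivation amounts to testing the identity $(\partial_t^2 - L) u = 0$ against $\eta^2 w$, integrating by parts in $t$ (for the $\partial_t^2 u$ term) and in $y$ (using the quadratic form $\langle L u, \phi\rangle = \int \nabla u\cdot \nabla\phi + \int V u \phi$), and then substituting $u = w + v$ in the resulting identity. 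The positive piece $\int\!\!\int \eta^2(|\partial_t w|^2 + |\nabla w|^2 + V w^2)$ then appears on one side, while the cross terms are handled by Cauchy--Schwarz with a small constant to absorb a fraction of the positive piece back into the left-hand side.

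Finally I decompose $t$ dyadically. For $k \in \ZZ$ I pick a cutoff $\eta_k$ supported in $[2^{k-1},2^{k+2}]\times B(x,2^{k+2})$, equal to one on $[2^k,2^{k+1}]\times B(x,2^{k+1})$, satisfying $|\partial_t \eta_k| + |\nabla \eta_k| \leq C\,2^{-k}$. On this strip the doubling volume property \eqref{chap5-doubling} gives $\frac{t}{Vol(y,t)} \sim \frac{2^k}{Vol(x,2^k)}$ and $\frac{1}{t Vol(y,t)} \sim \frac{2^{-k}}{Vol(x,2^k)}$. Multiplying the Caccioppoli inequality for $\eta_k$ by $2^k Vol(x,2^k)^{-1}$ converts the $w$-part of the $k$-th dyadic piece of $P_L(f)(x)^2$ into $\int\!\!\int_{\mathrm{supp}\,\eta_k} w^2\,(t Vol(y,t))^{-1}\mathrm{d}y\,\mathrm{d}t$ plus the $k$-th dyadic piece of the $v$-contribution. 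Summing over $k$ with finite overlap yields the desired estimate, with the ball $B(x,2t)$ in the statement absorbing the slight enlargement produced by the cutoffs (the precise constant in front of $t$ can be tweaked via doubling). The main obstacle is the Caccioppoli step itself: since $w$ does not solve any simple equation, one must test the Poisson equation for $u$ against $\eta^2 w$ and exploit the decomposition $u = w + v$ to display the $v$-terms as a separate right-hand side contribution rather than trying to absorb them; once this is done, the rest of the argument is a routine dyadic summation.
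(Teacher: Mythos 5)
Your argument is essentially a reorganization of the paper's proof around the same core identity: both test the equation $(\partial_t^2-\Delta-V)e^{-tL^{1/2}}f=0$ against a cutoff times $w=u-v$, integrate by parts, and absorb the quadratic terms in $w$. The difference is in the implementation. The paper takes the single global weight $t\,\phi^2(d(x,y)/t)/Vol(y,t)$ as its test multiplier, which forces it to differentiate the volume function; that is exactly what Lemma \ref{chap5-lemme-derivation} ($|\nabla_{t,y}Vol(y,t)|\le Ct^{-1}Vol(y,t)$, a consequence of doubling) supplies, and the absorption is done globally into $P_L(f)(x)^2$ itself. You instead freeze the weight on dyadic strips $t\sim 2^k$ via doubling and run a local Caccioppoli inequality on each strip; this avoids differentiating $Vol(y,t)$ and makes the absorption local (into $\iint\eta_k^2|\nabla_{t,y}w|^2$, which is finite for compactly supported $\eta_k$ — arguably cleaner than the paper's absorption into $P_L(f)^2$, which tacitly requires a priori finiteness). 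Your Caccioppoli inequality itself is correct: testing against $\eta^2w$ and substituting $u=w+v$ produces exactly the cross terms you describe, all of which are absorbable by Young's inequality.

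One point deserves a correction rather than a parenthesis: your dyadic cutoffs must equal $1$ on $B(x,2^{k+1})$ for $t$ down to $2^{k-1}$, so their supports live in cones of aperture at least $4$ or $5$, not $2$. The enlargement therefore cannot be ``absorbed by the ball $B(x,2t)$ in the statement'' — pointwise you prove a strictly weaker inequality, with $B(x,2t)$ replaced by $B(x,ct)$ for some $c>2$, and doubling does not shrink an aperture pointwise. This is harmless for the intended application (the $L^p$ boundedness of $P_L$), because conical functionals of different apertures have comparable $L^p$ norms under \eqref{chap5-doubling} (change of aperture in tent spaces, in the spirit of Proposition \ref{chap5-comparaison}), but you should either invoke that explicitly or state your lemma with the larger aperture.
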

\begin{proof}
We note that 
$$P_L(f)(x) \leq  \left( \int_0^\infty \int_M \left[ |\nabla_{t,y} e^{-tL^{1/2}} f|^2  + V |e^{-tL^{1/2}}f|^2 \right] \phi^2  \left(\frac{d(x,y)}{t}\right) \frac{ t \mathrm{d}t \mathrm{d}y}{Vol(y,t)} \right)^{1/2}$$
where $\phi$ is a non-negative smooth function on $\mathbb{R}_+$ such that $\phi(s) = 1$ if $s \leq 1$ and $\phi(s) = 0$ if $s>2$.
Set $u := e^{-tL^{1/2}} f$ and $ v := e^{-t^2 L} f$. One has 
\begin{align*}
P_L(f)(x)^2 &\leq \int_M \int_0^\infty \left[\nabla_{t,y} u . \nabla_{t,y}  (u-v) + V u (u-v)\right] \phi^2 \left(\frac{d(x,y)}{t}\right) \frac{ t \mathrm{d}t \mathrm{d}y}{Vol(y,t)} \\&+ \int_M \int_0^\infty \left[ \nabla_{t,y}  u . \nabla_{t,y}  v + V uv \right]  \phi^2 \left(\frac{d(x,y)}{t} \right) \frac{ t \mathrm{d}t \mathrm{d}y}{Vol(y,t)} \\
&=: I_1 +I_2.  
\end{align*}
By Cauchy-Schwarz and Young inequalities we obtain for all $\epsilon>0,$
\begin{align*}
I_2  &\leq   \epsilon\int_0^\infty  \int_{B(x,2t)} |\nabla_{t,y} u|^2   \phi^2 \left(\frac{d(x,y)}{t}\right) \frac{t \mathrm{d}t \mathrm{d}y}{Vol(y,t)} \\ &+ \epsilon^{-1} \int_0^\infty   \int_{B(x,2t)} |\nabla_{t,y} v|^2   \phi^2 \left(\frac{d(x,y)}{t}\right) \frac{t \mathrm{d}t \mathrm{d}y}{Vol(y,t)} \\&+  \epsilon\int_0^\infty  \int_{B(x,2t)} V u^2   \phi^2 \left(\frac{d(x,y)}{t}\right) \frac{t \mathrm{d}t \mathrm{d}y}{Vol(y,t)} \\&+ \epsilon^{-1} \int_0^\infty   \int_{B(x,2t)} V v^2   \phi^2 \left(\frac{d(x,y)}{t}\right) \frac{t \mathrm{d}t \mathrm{d}y}{Vol(y,t)}\\
&\leq C \epsilon^{-1} \int_0^\infty   \int_{B(x,2t)} \left[|\nabla_{t,y} v|^2 + Vv^2\right] \frac{t \mathrm{d}t \mathrm{d}y}{Vol(y,t)}.
\end{align*}
The last inequality is obtained by choosing $\epsilon$ small enough. Now we deal with $I_1$. After integrations by parts (in $y$ and $t$) and using $(\frac{\partial^2}{\partial t^2} - \Delta-V) e^{-tL^{1/2}}f = 0$ we obtain
\begin{align*}
|I_1| &\leq \int_0^\infty \int_{M} |u-v| \left| \nabla_{t,y} u . \nabla_{t,y}  \left[   \frac{t \phi^2(d(x,y)/t)}{Vol(y,t)}\right] \right| \mathrm{d}t \mathrm{d}y
\end{align*}
The doubling property \eqref{chap5-doubling} and Lemma \ref{chap5-lemme-derivation} yield
\begin{equation}
\left|\nabla_{t,y}  \left[   \frac{t \phi^2(d(x,y)/t)}{Vol(y,t)}\right] \right| \leq C \frac{\phi(d(x,y)/t) \theta(d(x,y)/t)}{Vol(y,t)}
\end{equation} where $\theta(s) = \phi(s) + |\phi'(s)|$.
Hence, by Young inequality
\begin{align*}
I_1  &\leq   C \left[ \epsilon\int_0^\infty  \int_{B(x,2t)} |\nabla_{t,y} u|^2 \frac{t \mathrm{d}t \mathrm{d}y}{Vol(y,t)} + \epsilon^{-1} \int_0^\infty  \int_{B(x,2t)} |u-v|^2 \frac{\mathrm{d}t \mathrm{d}y}{t Vol(y,t)} \right]  \\
&\leq C \epsilon^{-1} \int_0^\infty  \int_{B(x,2t)} |u-v|^2 \frac{\mathrm{d}t \mathrm{d}y}{t Vol(y,t)}.
\end{align*}
The last inequality is obtained by choosing epsilon small enough.
\end{proof} 
As a consequence we can state the following theorem.
\begin{theorem}
Assume that $M$ satisfies the doubling property \eqref{chap5-doubling}, then $P_L$ is bounded on $L^p$ for $p \in [2,+\infty)$. 
\end{theorem}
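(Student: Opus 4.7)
The plan is to apply Lemma \ref{chap5-lemme 6}, which reduces bounding $\|P_L(f)\|_p$ to controlling the $L^p$ norms of the two quantities on its right-hand side, namely
\begin{align*}
A_1(f)(x) &:= \left( \int_0^\infty \int_{B(x,2t)} |(e^{-t^2 L} - e^{-tL^{1/2}})f(y)|^2 \frac{\mathrm{d}y\,\mathrm{d}t}{t\,Vol(y,t)} \right)^{1/2}, \\
A_2(f)(x) &:= \left( \int_0^\infty \int_{B(x,2t)} \bigl(|\nabla_{t,y} e^{-t^2L} f|^2 + V|e^{-t^2L}f|^2\bigr) \frac{t\,\mathrm{d}y\,\mathrm{d}t}{Vol(y,t)} \right)^{1/2}.
\end{align*}
I estimate them separately, using the already-established boundedness of $\G_L$ and of Stein-type functionals $\SSS_\phi$ for $A_2$, and the bounded holomorphic functional calculus of $L$ on $L^p$ for $A_1$.

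For $A_2$, I decompose $|\nabla_{t,y} e^{-t^2 L}f|^2 = |\nabla_y e^{-t^2L}f|^2 + 4t^2|Le^{-t^2L}f|^2$ via $\partial_t e^{-t^2L} = -2tLe^{-t^2L}$. After the change of variable $s = t^2$, the spatial gradient plus potential contribution becomes, up to a multiplicative constant, an area function of $|\nabla e^{-sL}f(y)|^2 + V|e^{-sL}f(y)|^2$ with ball $B(x, 2\sqrt s)$ in place of $B(x, \sqrt s)$. By the standard aperture-change property of tent spaces on doubling spaces, its $L^p$ norm is comparable to $\|\G_L(f)\|_p$, which is controlled by $\|f\|_p$ thanks to Theorem \ref{chap5-theogrand}. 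The time derivative contribution, after the same substitution, is comparable (up to a constant and the same aperture change) to $\SSS_\phi(f)$ with $\phi(z):=ze^{-z}$. Since $|\phi(z)| \leq C|z|/(1+|z|^2)$ on any $\Sigma(\theta)$ with $\theta < \pi/2$, Proposition \ref{chap5-prop2-3}(2) yields $\|\SSS_\phi f\|_p \leq C\|f\|_p$ for all $p \in [2,\infty)$.

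For $A_1$, the change of variable $\tau = 2t$ together with the doubling property converts the ball $B(x,2t)$ into $B(x,\tau)$ at the cost of a constant in the volume factor, so Proposition \ref{chap5-comparaison} reduces the estimate to the vertical bound
\begin{equation*}
\left\| \left( \int_0^\infty |(e^{-s^2L} - e^{-sL^{1/2}})f|^2 \frac{\mathrm{d}s}{s} \right)^{1/2} \right\|_p \leq C\|f\|_p.
\end{equation*}
Writing $e^{-s^2 L} - e^{-sL^{1/2}} = G(s^2L)$ for $G(z) := e^{-z} - e^{-z^{1/2}}$ (using $(s^2L)^{1/2} = sL^{1/2}$), the further substitution $u = s^2$ converts the integral into $\frac{1}{2}\int_0^\infty |G(uL) f|^2 \frac{\mathrm{d}u}{u}$, a square function in the holomorphic functional calculus of $L$. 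The function $G$ is holomorphic on every sector $\Sigma(\mu)$ with $\mu < \pi/2$, satisfies $G(z) = z^{1/2} - \frac{3}{2} z + O(z^{3/2})$ near zero, and decays exponentially as $|z| \to \infty$ in any such sector (both $\cos\theta > 0$ and $\cos(\theta/2) > 0$ for $|\theta| < \mu$), so $G \in H_0^\infty(\Sigma(\mu))$. Choosing $\mu \in (\mu_p, \pi/2)$ and invoking the bounded holomorphic functional calculus of $L$ on $L^p$ with angle $\mu_p + \epsilon$, together with the associated square function estimate (as recalled in the proof of Proposition \ref{chap5-prop2-3}), gives the required bound.

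The main obstacle is the analysis of $A_1$: one has to recognize, after the two successive changes of variable, that it is essentially a square function in the functional calculus of $L$ associated with the auxiliary symbol $G(z) = e^{-z} - e^{-z^{1/2}}$, and to verify that $G$ has the appropriate decay at $0$ and at $\infty$ on a sector strictly wider than $\mu_p$. The estimate of $A_2$ is, in contrast, a routine consequence of Theorem \ref{chap5-theogrand} and Proposition \ref{chap5-prop2-3}(2), modulo elementary change-of-variable and doubling bookkeeping.
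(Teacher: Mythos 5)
Your proposal is correct and follows essentially the same route as the paper: apply Lemma \ref{chap5-lemme 6}, absorb the spatial-gradient-plus-potential part of the second term into $\G_L$ (Theorem \ref{chap5-theogrand}), treat the time-derivative part and the difference term $e^{-t^2L}-e^{-tL^{1/2}}$ as square functions in the holomorphic functional calculus via Proposition \ref{chap5-comparaison} and Proposition \ref{chap5-prop2-3}. You merely make explicit the aperture-change and change-of-variable bookkeeping and the verification that $G(z)=e^{-z}-e^{-z^{1/2}}$ lies in $H_0^\infty$, which the paper leaves implicit.
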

\begin{proof}
Fix $p \in [2,+\infty)$. Lemma \ref{chap5-lemme 6} gives 
\begin{align*}\|P_L(f)\|_p &\leq C \left[ \|\G_L(f)\|_p + \left\| \left(  \int_0^\infty \int_{B(x,2t)} |\nabla_{t,y} e^{-t^2 L} f|^2 \frac{ t \mathrm{d}y \mathrm{d}t}{Vol(y,t)} \right)^{1/2} \right\|_p \right] \\ &\leq C \left[
\|f \|_p + \left\| \left( \int_0^\infty \left| \left( e^{-t L^{1/2}} - e^{-t^2 L} \right) f \right|^2 \frac{\mathrm{d}t}{t} \right)^{1/2}\right\|_p \right].
\end{align*}
The second part of the RHS term is the $L^p$ norm of the horizontal square function associated with $\phi(z) = e^{-z^{1/2}} - e^{-z}$, and is then bounded by $C \|f \|_p$.
\end{proof}


\section{Study of $\vec{P}$}
In this very short section, we introduce the conical square function associated with the Poisson semigroup on 1-forms. It is defined as follows.
\begin{equation*}
\vec{P}(\omega)(x)= \left( \int_0^\infty \int_{B(x,t)}   |d^* e^{-t\LF^{1/2}} \omega|^2 + |d e^{-t\LF^{1/2}} \omega|^2+ |\frac{\partial}{\partial t} e^{-t\vec{\Delta}^{1/2}} \omega |^2 \frac{t \mathrm{d}t \mathrm{d}y}{Vol(y,t)}\right)^{1/2} . 
\end{equation*}
We denote by $\vec{P}_t$ the time derivative part of $P$, $\vec{P}_d$ the derivative part and $\vec{P}_{d^*}$ the co-derivative part. We denote by $\vec{P}_x$ the part with both the derivative and the co-derivative.
\begin{align*}
\vec{P}_t(\omega)(x) &= \left( \int_0^\infty \int_{B(x,t)}  |\frac{\partial}{\partial t} e^{-t\LF^{1/2}} \omega|^2  \frac{t \mathrm{d}t \mathrm{d}y}{Vol(y,t)}\right)^{1/2}, \\
\vec{P}_x(\omega)(x) &= \left( \int_0^\infty \int_{B(x,t)}   |d^* e^{-t\LF^{1/2}} \omega|^2 + |d e^{-t\LF^{1/2}} \omega|^2  \frac{t \mathrm{d}t \mathrm{d}y}{Vol(y,t)}\right)^{1/2}. 
\end{align*}
We obtain as for $P_L$ the following result.
\begin{proposition}
$\vec{P}$ is bounded on $L^2$
\end{proposition}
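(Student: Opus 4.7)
The plan is to mimic the proof of boundedness of $P_L$ on $L^2$ given earlier in the excerpt, replacing the Schr\"odinger operator $L$ by the Hodge--de Rham Laplacian $\vec{\Delta}$ on $1$-forms and using the identity $\vec{\Delta}=dd^*+d^*d$ in place of the quadratic form of $L$.

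First I would expand $\|\vec{P}(\omega)\|_2^2$ as an iterated integral in $x$, $y$ and $t$, and apply Fubini to interchange the $x$ and $y$ integrals. Since $\chi_{B(x,t)}(y)=\chi_{B(y,t)}(x)$, the inner integral over $x$ yields exactly $Vol(y,t)$, which cancels the weight $1/Vol(y,t)$. Writing $u:=e^{-t\vec{\Delta}^{1/2}}\omega$, this reduces matters to
$$\|\vec{P}(\omega)\|_2^2=\int_0^\infty\int_M\Bigl[\,|d^*u|^2+|du|^2+|\partial_t u|^2\,\Bigr]\,t\,dy\,dt.$$

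Next, for any $1$-form $u$ in the domain of $\vec{\Delta}$, a straightforward integration by parts gives $\int_M(|d^*u|^2+|du|^2)\,dy=\langle\vec{\Delta}u,u\rangle=\|\vec{\Delta}^{1/2}u\|_2^2$. Since $\partial_t u=-\vec{\Delta}^{1/2}u$, the third term equals $\|\vec{\Delta}^{1/2}u\|_2^2$ as well, so the spatial integrand collapses to $2\|\vec{\Delta}^{1/2}e^{-t\vec{\Delta}^{1/2}}\omega\|_2^2$.

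Finally, I would invoke the spectral theorem for $\vec{\Delta}$ on $L^2(\Lambda^1 T^*M)$: since $\partial_t^2 u=\vec{\Delta}u$, one has $\tfrac{d^2}{dt^2}\|u\|_2^2=4\|\vec{\Delta}^{1/2}u\|_2^2$, and two integrations by parts in $t$ (or, equivalently, the direct computation $\int_0^\infty t\lambda e^{-2t\sqrt{\lambda}}\,dt=\tfrac14$ inside the spectral integral) yield $\|\vec{P}(\omega)\|_2^2=\tfrac12\|\omega\|_2^2$, where one uses that $\vec{\Delta}$ may have a kernel (harmonic $1$-forms) on which both sides vanish. The only delicate point is the vanishing of the boundary terms $t\tfrac{d}{dt}\|e^{-t\vec{\Delta}^{1/2}}\omega\|_2^2$ at $t=0$ and $t=\infty$, which follows from spectral calculus and requires no geometric hypothesis beyond completeness of $M$.
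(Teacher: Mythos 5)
Your proposal is correct and follows essentially the same route as the paper, which proves this proposition exactly ``as for $P_L$'': Fubini plus the averaging identity $\int_{B(y,t)}\mathrm{d}x = Vol(y,t)$, the Hodge identity $\int_M |du|^2+|d^*u|^2 = \langle \vec{\Delta}u,u\rangle$ in place of the quadratic form of $L$, and spectral calculus in $t$. (Your constant $\tfrac12\|\omega\|_2^2$ is in fact the correct one; the paper's stated value $2\|f\|_2^2$ in the $P_L$ computation contains a minor arithmetic slip.)
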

The boundedness of these functionals may have consequences concerning the boundedness of the Riesz transform. We make some comments in the following sections.

\section{Lower bounds}
In this section, we prove that the boundedness of conical square functionals on $L^p$ implies lower bounds on the dual space $L^{p'}$.
\begin{theorem}\label{chap5-dual}
Let $F: \mathbb{R}_+ \mapsto \mathbb{C}$ be a function in $L^2(\mathbb{R_+})$ such that $F(0) \ne 0$. If $\G_L^F$ is bounded on $L^p$ then there exists $C >0$ such that for all $f \in L^{p'}$,
$$ \left\| f  \right\|_{p'} \leq C \left\| \G_L^F (f) \right\|_{p'}.$$
\end{theorem}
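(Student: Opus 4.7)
The plan is to prove the reverse inequality by duality, combining a spectral reproducing formula for $L$ with the Fubini/Cauchy-Schwarz manipulation already used in the paper for $\SSS_{\phi_0}$. Since $L^{p'}$ is the dual of $L^p$, we have $\|f\|_{p'} = \sup_{\|g\|_p \leq 1} |\langle f, g\rangle|$, so it suffices to establish the bilinear bound $|\langle f, g\rangle| \leq C \|\G_L^F(f)\|_{p'} \|g\|_p$ on a dense subspace.

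The first step is the reproducing formula
$$c\, f = \int_0^\infty F(tL)^* L F(tL) f \, dt, \qquad c := \int_0^\infty |F(s)|^2\, ds,$$
valid on the orthogonal complement of $\ker L$. This comes from spectral calculus: for $\lambda > 0$, the change of variable $s = t\lambda$ gives $\int_0^\infty |F(t\lambda)|^2 dt = c/\lambda$, hence $L \int_0^\infty F(tL)^* F(tL)\, dt = c\, I$. The assumption $F \in L^2(\mathbb{R}_+)$ with $F(0) \neq 0$ is what guarantees $c$ is finite and strictly positive. Pairing with $g$ and integrating by parts via $\langle Lu, v\rangle = \int_M [\nabla u \cdot \overline{\nabla v} + V u \overline{v}]\, dy$, one arrives at
$$c \langle f, g\rangle = \int_0^\infty \int_M \bigl[ \nabla F(tL)f \cdot \overline{\nabla F(tL)g} + V\, F(tL)f\, \overline{F(tL)g}\bigr] dy\, dt.$$

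To bring in the conical structure, I insert a ball average via the Fubini identity
$$\int_M h(y)\, dy = \int_M \int_{B(x,t^{1/2})} h(y)\, \frac{dy}{Vol(y,t^{1/2})}\, dx,$$
which follows from $y \in B(x,t^{1/2}) \iff x \in B(y,t^{1/2})$ and $\int_{B(y,t^{1/2})} dx = Vol(y,t^{1/2})$. Applying this to each of the two terms and then using pointwise Cauchy-Schwarz in the measure $dy\, dt/Vol(y,t^{1/2})$, combining the gradient and potential parts via the vector Cauchy-Schwarz in $\mathbb{R}^2$, I obtain
$$c\, |\langle f, g\rangle| \leq \int_M \G_L^F(f)(x)\, \G_L^F(g)(x)\, dx.$$
Hölder's inequality and the assumed boundedness of $\G_L^F$ on $L^p$ now conclude:
$$c\, |\langle f, g\rangle| \leq \|\G_L^F(f)\|_{p'} \|\G_L^F(g)\|_p \leq C \|\G_L^F(f)\|_{p'} \|g\|_p.$$

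The main obstacle I expect is the rigorous justification of the reproducing formula on all of $L^{p'}$, rather than $L^2$: the spectral identity lives naturally on $L^2$, so one must choose $f, g$ in a dense subspace of $L^{p'} \cap L^2$ and $L^p \cap L^2$ respectively, verify that the right-hand integrals converge and that integration by parts is legitimate (which is automatic for nice enough $F$ by functional calculus), and then extend by density using the continuity of $\G_L^F$ on $L^p$. The role of the hypothesis $F(0) \neq 0$ is to guarantee that $c$ does not vanish and that the identity above is non-degenerate; it also lets one control the behaviour of $F(tL)f$ as $t \to 0$ in the limiting arguments needed to recover $f$.
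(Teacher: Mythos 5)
Your proposal is correct and follows essentially the same route as the paper: the spectral identity $\int_0^\infty L|F|^2(tL)f\,dt = \bigl(\int_0^\infty |F(s)|^2\,ds\bigr) f$ (the paper phrases it via $\mathcal{F}(\lambda)=\int_\lambda^\infty |F(t)|^2\,dt$ and the fundamental theorem of calculus rather than your change of variables, but it is the same computation), integration by parts to produce the $\nabla$ and $\sqrt{V}$ pairings, the ball-averaging Fubini trick, Cauchy--Schwarz and H\"older, and then duality using the assumed $L^p$-boundedness of $\G_L^F$. Your closing remarks on the $L^2\cap L^p$ density issue and on the role of $F(0)\neq 0$ (equivalently, $\mathcal{F}(0)>0$) are consistent with what the paper leaves implicit.
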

\begin{proof}
Let $f$ be in $L^p \cap L^2$ and $g$ be in $L^{p'} \cap L^2$. By integration by parts,
\begin{align*}
\int_0^\infty \int_M &\nabla F(tL) f . \overline{\nabla F(tL) g}  \mathrm{d}t \mathrm{d}x + \int_0^\infty \int_M\sqrt{V} F(tL) f . \overline{\sqrt{V} F(tL) g} \mathrm{d}t \mathrm{d}x \\ &= \int_0^\infty \int_M L  F(tL) f .\overline{  F(tL) g}  \mathrm{d}t \mathrm{d}x \\
&= \int_0^\infty \int_M L  |F(tL)|^2 f . \overline{g}  \mathrm{d}t \mathrm{d}x
\end{align*}
Set  $\mathcal{F}(\lambda) = \int_\lambda^\infty |F(t)|^2 \mathrm{d}t$. One has $\mathcal{F} ( \lambda)  \rightarrow 0 $ when $\lambda \rightarrow + \infty$. Therefore, the spectral resolution gives $\mathcal{F}(tL) f \rightarrow 0$ as $t \rightarrow + \infty$. The spectral resolution also implies that $\frac{\partial}{\partial t  } \mathcal{F}(tL)^2 = - L|F|^2(tL).$ From this we obtain
\begin{align*}
\int_0^\infty &\int_M L |F|^2(tL)f . \overline{g} \mathrm{d}t \mathrm{d}x  \\
&= \int_0^\infty \int_M -\frac{\partial}{\partial t } \mathcal{F}(tL) f . \overline{g} \mathrm{d}t \mathrm{d}x \\
&=  \int_M f . \overline{\mathcal{F}(0) g} \mathrm{d}x.
\end{align*}
 Using all the forgoing equalities and the same averaging trick as in the former proofs,
 \begin{align*}
 &\left| \int_M f.\overline{\mathcal{
 F}(0)g} \mathrm{d}x \right| \\
 &=\int_0^\infty \int_M \nabla F(tL) f . \overline{\nabla F(tL) g} + \sqrt{V} F(tL) f . \overline{\sqrt{V} F(tL) g  } \mathrm{d}t \mathrm{d}x \\
&= \int_0^\infty \int_M \int_{B(x,t^{1/2})} \nabla F(tL) f . \overline{\nabla F(tL) g } \frac{\mathrm{d}t \mathrm{d}x \mathrm{d}y }{Vol(x,t^{1/2})} \\ &+ \int_0^\infty \int_M \int_{B(x,t^{1/2})}  \sqrt{V} F(tL) f . \overline{\sqrt{V} F(tL) g } \frac{\mathrm{d}t \mathrm{d}x \mathrm{d}y }{Vol(x,t^{1/2})} \\
&= \int_0^\infty \int_M \int_{B(y,t^{1/2})} \nabla F(tL) f . \overline{\nabla F(tL) g} \frac{\mathrm{d}t \mathrm{d}x \mathrm{d}y }{Vol(x,t^{1/2})} \\ &+ \int_0^\infty \int_M \int_{B(y,t^{1/2})}  \sqrt{V} F(tL) f . \overline{\sqrt{V} F(tL) g} \frac{\mathrm{d}t \mathrm{d}x \mathrm{d}y }{Vol(x,t^{1/2})}.
  \end{align*}
  The Cauchy-Schwarz (in $t$) and Hölder (in $y$) inequalities give
   \begin{align*}
 &\left| \int_M f.\overline{\mathcal{F}(0)g} \mathrm{d}x \right| \\
 &\leq \int_M \left[ \int_0^\infty \int_{B(y,t^{1/2})}  |\nabla F(tL) f|^2 + V |F(tL)f |^2 \frac{\mathrm{d}t \mathrm{d}x}{Vol(x,t^{1/2}) } \right]^{1/2} \\ &\times  \left[ \int_0^\infty \int_{B(y,t^{1/2})} |\nabla F(tL) g|^2 + V |F(tL) g |^2  \frac{\mathrm{d}t \mathrm{d}x}{Vol(x,t^{1/2}) } \mathrm{d}y \right]^{1/2}  \\
 &\leq \| \G_L^F(f) \|_p \|\G_L^F(g)\|_{p'} \\
 &\leq C \| f\|_p \|\G_L^F(g)\|_{p'}.
 \end{align*}
 We obtain the result by taking the supremum on $f$ in the unit ball of $L^p(M)$.
\end{proof}

One can also state a result about lower bounds concerning the functionals associated with the Poisson semigroup. They are not included in the latter theorem because of the time derivative part.

\begin{proposition} If $P_L$ is bounded on $L^p$, then the reverse inequality $$\| f \|_{p'} \leq C \| P_L(f) \|_{p'} $$ holds for all $f \in L^{p'}$.
\end{proposition}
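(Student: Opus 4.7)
The plan is to adapt the duality argument from the proof of Theorem \ref{chap5-dual} to the Poisson semigroup setting, replacing the heat equation identity $\partial_t e^{-tL} = -L e^{-tL}$ by the wave-type relation $\partial_t^2 e^{-tL^{1/2}} = L e^{-tL^{1/2}}$. By duality,
$$\|f\|_{p'} = \sup\bigl\{ |\langle f, g\rangle| : g \in L^p \cap L^2,\ \|g\|_p \leq 1 \bigr\},$$
so it suffices to bound $|\langle f, g\rangle|$ by $C \|P_L(f)\|_{p'} \|g\|_p$ for any such $g$.

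Setting $u(t,x) := e^{-tL^{1/2}}f(x)$ and $v(t,x) := e^{-tL^{1/2}}g(x)$, Leibniz's rule gives
$$\frac{\partial^2}{\partial t^2}(u \overline v) = (Lu)\overline v + 2 \partial_t u \,\overline{\partial_t v} + u \overline{Lv}.$$
Multiplying by $t$ and integrating twice by parts on $(0, +\infty)$---the boundary term $\left[ t\, \partial_t(u\overline v) \right]_0^\infty$ vanishes thanks to the decay of the Poisson semigroup---produces the pointwise identity $\int_0^\infty t\, \partial_t^2(u\overline v)\, \mathrm{d}t = f(x)\overline{g(x)}$. Integrating in $x \in M$ and applying Green's formula $\int_M (Lu)\overline v\, \mathrm{d}x = \int_M \nabla u \cdot \overline{\nabla v} + V u \overline v\, \mathrm{d}x = \int_M u \overline{Lv}\, \mathrm{d}x$, we obtain
$$\int_M f \overline g\, \mathrm{d}x = 2 \int_M \int_0^\infty t \bigl[ \nabla u \cdot \overline{\nabla v} + V u \overline v + \partial_t u \, \overline{\partial_t v} \bigr] \mathrm{d}t\, \mathrm{d}y.$$

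The next step is the averaging trick: by Fubini, $\int_M h(y,t)\, \mathrm{d}y = \int_M \int_{B(x,t)} h(y,t) \frac{\mathrm{d}y\, \mathrm{d}x}{Vol(y,t)}$, which introduces the cone integration on the right-hand side. Pointwise Cauchy-Schwarz on the inner integral with respect to the measure $\frac{t\, \mathrm{d}y\, \mathrm{d}t}{Vol(y,t)}$ then yields
$$\left| \int_M f \overline g\, \mathrm{d}x \right| \leq 2 \int_M P_L(f)(x) P_L(g)(x)\, \mathrm{d}x.$$
Hölder with exponents $p'$ and $p$ together with the hypothesized boundedness of $P_L$ on $L^p$ gives $|\langle f, g\rangle| \leq 2 \|P_L(f)\|_{p'} \|P_L(g)\|_p \leq 2C \|P_L(f)\|_{p'} \|g\|_p$, and taking the supremum over $g$ in the unit ball of $L^p$ yields the claimed reverse inequality.

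The main subtle point is the rigorous justification of the two integrations by parts in $t$: one must verify that $t\, \partial_t(u\overline v) \to 0$ at both endpoints. At $t = 0$ this is immediate once $\partial_t(u\overline v)$ stays bounded, which holds for $f, g$ in the domain of $L^{1/2}$ by spectral calculus; at $t = +\infty$ it follows from the $L^2$-decay of the Poisson semigroup and of its time derivative. A standard density argument then extends the resulting inequality from a dense regular subspace to all of $L^{p'}$.
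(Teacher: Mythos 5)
Your proof is correct and follows essentially the same route as the paper: integration by parts in $t$ against the weight $t$ using $\partial_t^2 e^{-tL^{1/2}} = L e^{-tL^{1/2}}$, Green's formula to produce $\nabla u\cdot\overline{\nabla v} + Vu\overline{v} + \partial_t u\,\overline{\partial_t v}$, the averaging trick, Cauchy--Schwarz over the cone, and H\"older plus the assumed $L^p$-boundedness of $P_L$. The only cosmetic difference is that the paper performs the integration by parts after integrating over $M$ (so only $L^2$-decay of the semigroup is needed), whereas you state a pointwise-in-$x$ identity first; your closing remark on justifying the boundary terms covers this.
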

\begin{proof}
Fix $f$ in $L^p \cap L^2$ and $g$ in $L^{p'} \cap L^2$. By integration by parts,
\begin{align*}
 \int_M f(x) g(x) \mathrm{d}x &= \int_0^\infty \frac{\partial}{\partial t}\int_M e^{-tL^{1/2}} f . e^{-tL^{1/2}} g \mathrm{d}t \mathrm{d}x \\
&=\int_0^\infty t \frac{\partial^2}{\partial t^2} \int_M  e^{-tL^{1/2}} f .  e^{-tL^{1/2}} g \mathrm{d}x \mathrm{d}t \\
&= \int_0^\infty 2 t \int_M \left( L^{1/2} e^{-tL^{1/2}} f .  L^{1/2} e^{-tL^{1/2}} g \right) \mathrm{d}x \mathrm{d}t \\
&+  \int_0^\infty 2 t \int_M \left(   L e^{-tL^{1/2}} f . e^{-tL^{1/2}} g  \right) \mathrm{d}x \mathrm{d}t  \\
&= 2 \int_0^\infty  \int_M \left( t\nabla_x e^{-tL^{1/2}} f . t\nabla_x e^{-tL^{1/2}} g \right) \frac{\mathrm{d}x \mathrm{d}t}{t} 
\\&+   2 \int_0^\infty  \int_M \left(  tV^{1/2} e^{-tL^{1/2}} f .  tV^{1/2} e^{-tL^{1/2}} g \right) \frac{\mathrm{d}x \mathrm{d}t}{t} \\
&+  \int_0^\infty  \int_M \left(  t\frac{\partial}{\partial t} e^{-tL^{1/2}} f .  t\frac{\partial}{\partial t} e^{-tL^{1/2}} g \right) \frac{\mathrm{d}x \mathrm{d}t}{t}
\end{align*}
By Cauchy-Schwarz inequality (in $t$) and the same averaging trick as for $\G_L$ we obtain 
\begin{equation*}
\left| \int_M f(x) g(x) \mathrm{d}x \right| \leq C \|P_L(f)\|_p \|P_L(g)\|_{p'}.
\end{equation*}
The boundedness $P_L$ on $L^p$ and taking the supremum on $f$  gives $\|g \|_{p'} \leq C \| P_L(g) \|_{p'}$.
\end{proof} 
\begin{remark}The same result still holds if we only consider $P_{L,x}$ or $P_{L,t}$.
\end{remark}

We obtain the same result for $\vec{P}$.

\begin{proposition} If $\vec{P}$ is bounded on $L^p$, then the reverse inequality $$\| \omega \|_{p'} \leq C \| \vec{P}(\omega) \|_{p'} $$ holds for all $\omega \in L^{p'}$. The result remains true if we consider only $\vec{P}_x$ or  $\vec{P}_t$.
\end{proposition}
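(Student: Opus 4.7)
The plan is to mimic the duality argument used for $P_L$ in the previous proposition, with the Hodge--de Rham Laplacian $\vec{\Delta}$ and the Riemannian pointwise inner product on $1$-forms in place of the scalar setting. By duality it suffices to bound $\left| \int_M \omega \cdot \eta\, \mathrm{d}x \right|$ by $C\,\| \vec{P}(\omega)\|_{p'}\,\|\eta\|_p$ for every pair $(\omega,\eta) \in (L^{p'}\cap L^2) \times (L^p \cap L^2)$ and then take the supremum over $\eta$ in the unit ball of $L^p$.

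First I would express the duality pairing through two integrations by parts in $t$: setting $u(t) := e^{-t\vec{\Delta}^{1/2}}\omega$ and $v(t) := e^{-t\vec{\Delta}^{1/2}}\eta$, the same manipulation as in the proof for $P_L$ gives
$$\int_M \omega \cdot \eta\, \mathrm{d}x = \int_0^\infty t\, \frac{\partial^2}{\partial t^2}\!\! \int_M u \cdot v\, \mathrm{d}x\, \mathrm{d}t.$$
Using the Poisson equation $\partial_t^2 u = \vec{\Delta} u$, the analogous identity for $v$, and the Hodge identity $\int_M \vec{\Delta} u \cdot v\, \mathrm{d}x = \int_M d^*u\, d^*v\, \mathrm{d}x + \int_M du \cdot dv\, \mathrm{d}x$, the integrand splits into three pieces involving $d^*u\, d^*v$, $du \cdot dv$, and $\partial_t u \cdot \partial_t v$, each weighted by a factor $t$.

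Next I would apply the averaging trick used repeatedly in the paper: insert the factor $1 = \frac{1}{Vol(x,t)}\int_{B(x,t)}\mathrm{d}y$, swap the $x,y$-integrals by Fubini (using $y \in B(x,t) \Leftrightarrow x \in B(y,t)$), and replace $Vol(x,t)$ by $Vol(y,t)$ via the volume doubling property. A Cauchy--Schwarz inequality in $(x,t)$ with respect to the measure $\frac{t\, \mathrm{d}x\, \mathrm{d}t}{Vol(y,t)}$, followed by H\"older in $y$ with exponents $p$ and $p'$, then produces
$$\left| \int_M \omega \cdot \eta\, \mathrm{d}x \right| \leq C\, \|\vec{P}(\omega)\|_{p'}\, \|\vec{P}(\eta)\|_p \leq C\, \|\vec{P}(\omega)\|_{p'}\, \|\eta\|_p,$$
where the last inequality uses the assumed $L^p$-boundedness of $\vec{P}$. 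Taking the supremum over $\eta$ in the unit ball of $L^p$ gives the announced reverse inequality. For the $\vec{P}_x$ and $\vec{P}_t$ statements one runs the identical computation while keeping only the relevant terms: $\vec{P}_t$ from the $\partial_t u \cdot \partial_t v$ contribution, and $\vec{P}_x$ from the spatial pieces $d^*u\, d^*v$ and $du \cdot dv$.

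The main technical delicacy is justifying the two integrations by parts in $t$, i.e.\ the vanishing of the boundary terms at $0$ and at $\infty$. This is routine on the dense subspace $R(\vec{\Delta})$ of $L^2$, on which the Poisson semigroup decays strongly at infinity; the inequality then extends to all of $L^{p'}$ by density once it has been established on this subspace.
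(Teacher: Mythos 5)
Your argument is exactly the one the paper intends: the proposition is stated without proof immediately after the analogous result for $P_L$, whose duality/double-integration-by-parts proof you reproduce verbatim, with the identity $\int_M \vec{\Delta}u\cdot v\,\mathrm{d}x = \int_M d^*u\, d^*v\,\mathrm{d}x + \int_M du\cdot dv\,\mathrm{d}x$ replacing the scalar one, followed by the same averaging trick, Cauchy--Schwarz and H\"older steps. The only caveat, equally present in the paper's unproved statement, is your final density remark: $R(\vec{\Delta})$ is dense in $L^2$ only modulo $L^2$-harmonic $1$-forms, so the reverse inequality must be understood on the closure of the range (or under $\ker\vec{\Delta}\cap L^2=\{0\}$), since $\vec{P}(\omega)=0$ whenever $\omega$ is harmonic.
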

\section{Link with the Riesz transform}\label{chap5-section-riesz}
Some links between Littlewood-Paley-Stein functions and the Riesz transforms have been established in \cite{Coulhon-Duong-2003}. We make analogous links between conical square functions and the Riesz transform. They rely on Theorem \ref{chap5-dual} together with the commutation formula $d \Delta = \vec{\Delta} d$.
\begin{theorem}\label{chap5-theo10-1}
\begin{enumerate}
\item If $P_{\Delta,x}$ is bounded on $L^p$ and $\vec{P_t}$ is bounded on $L^{p'}$ then the Riesz transform is bounded on $L^p$.
\item If $\vec{P_{x}}$ is bounded on $L^p$ and  $P_{\Delta,t}$ is bounded on $L^{p'}$ then the Riesz transform is bounded on $L^{p'}$.
\end{enumerate}
\end{theorem}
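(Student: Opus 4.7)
The plan is to combine the reverse inequalities of Section 9 with the commutation identity $d\Delta = \vec{\Delta}\,d$, which upgrades via functional calculus to $d\,e^{-t\Delta^{1/2}} = e^{-t\vec{\Delta}^{1/2}}\,d$ (and $d\,\phi(\Delta^{1/2}) = \phi(\vec{\Delta}^{1/2})\,d$ for any bounded Borel $\phi$). In both items the test object will be the $1$-form $\omega = d\Delta^{-1/2} f$, i.e.\ the Riesz transform applied to $f$; the point is that after commuting $d$ past the semigroup, the $1$-form level conical functional reduces pointwise to a function-level one that we control by hypothesis.

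For (i): the hypothesis that $\vec{P}_t$ is bounded on $L^{p'}$ gives, by the lower-bound proposition applied to $\vec{P}_t$ alone (this is the content of the remark following it), the estimate $\|\omega\|_p \leq C\,\|\vec{P}_t(\omega)\|_p$ for every $\omega \in L^p$. Inserting $\omega = d\Delta^{-1/2}f$ and commuting inside the time derivative,
\[
\tfrac{\partial}{\partial t}\, e^{-t\vec{\Delta}^{1/2}}\, d\Delta^{-1/2} f \;=\; d\,\tfrac{\partial}{\partial t}\, e^{-t\Delta^{1/2}}\,\Delta^{-1/2} f \;=\; -\,d\,e^{-t\Delta^{1/2}} f,
\]
which is precisely the integrand of $P_{\Delta,x}(f)$. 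Hence $\vec{P}_t(d\Delta^{-1/2} f)(x) = P_{\Delta,x}(f)(x)$ pointwise, and the assumed boundedness of $P_{\Delta,x}$ on $L^p$ yields $\|d\Delta^{-1/2} f\|_p \leq C\,\|f\|_p$.

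For (ii): the hypothesis $\vec{P}_x$ bounded on $L^p$ gives, again by the same remark, the reverse estimate $\|\omega\|_{p'} \leq C\,\|\vec{P}_x(\omega)\|_{p'}$. Take $\omega = d\Delta^{-1/2} f$. The $|d\,e^{-t\vec{\Delta}^{1/2}}\omega|$ piece of $\vec{P}_x$ is identically zero because $d^2 = 0$, while for the $d^*$ piece the identity $d^*d = \Delta$ on functions combined with the commutation gives
\[
d^*\,e^{-t\vec{\Delta}^{1/2}}\,d\Delta^{-1/2} f \;=\; d^* d\, e^{-t\Delta^{1/2}}\,\Delta^{-1/2} f \;=\; \Delta^{1/2}\,e^{-t\Delta^{1/2}} f \;=\; -\tfrac{\partial}{\partial t}\,e^{-t\Delta^{1/2}} f,
\]
so $\vec{P}_x(d\Delta^{-1/2} f) = P_{\Delta,t}(f)$ pointwise, and the hypothesis yields $\|d\Delta^{-1/2} f\|_{p'} \leq C\,\|f\|_{p'}$. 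The only subtlety — and the main thing to verify carefully — is functional-calculus bookkeeping: one runs the computation first on the dense subclass $R(\Delta^{1/2}) \cap L^p$ (respectively $L^{p'}$) where $\Delta^{-1/2} f$ is unambiguous and the commutations are literally valid, and then extends to the full $L^p$ (respectively $L^{p'}$) by density once the a priori bound is in hand.
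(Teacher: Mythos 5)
Your proof is correct and, for item (i), it is essentially the paper's argument: the paper applies the reverse inequality for $\vec{P}_t$ to the form $df$ and commutes to obtain $\|df\|_p \leq C\,\|P_{\Delta,x}(\Delta^{1/2}f)\|_p \leq C\,\|\Delta^{1/2}f\|_p$, which is your computation with $f$ replaced by $\Delta^{1/2}f$. For item (ii) you genuinely diverge: the paper disposes of it in one line by duality, observing that $d^*\vec{\Delta}^{-1/2}$ is the adjoint of $d\Delta^{-1/2}$ and running the item-(i) mechanism on the adjoint (reverse inequality for $P_{\Delta,t}$ on $L^p$ applied to $d^*\omega$, then commutation and the boundedness of $\vec{P}_x$ on $L^p$ give $\|d^*\vec{\Delta}^{-1/2}\|_{p\to p}<\infty$). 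You instead argue directly on $L^{p'}$: you feed $\omega = d\Delta^{-1/2}f$ into the reverse inequality for $\vec{P}_x$, kill the $d$-component via $d^2=0$, and use $d^*d=\Delta$ on functions to get the pointwise identity $\vec{P}_x(d\Delta^{-1/2}f) = P_{\Delta,t}(f)$. This buys a symmetric, adjoint-free proof of (ii) that makes the roles of the two hypotheses transparent; the paper's duality route has the advantage of not requiring the reverse inequality for $\vec{P}_x$ (only the one for $P_{\Delta,t}$), but both reverse inequalities are available from Section 9, so the two arguments rest on the same stock of lemmas. Your closing remark on running the computation on $f \in R(\Delta^{1/2})$ first and extending by density is a point the paper leaves implicit, and is the right way to avoid the apparent circularity of applying the lower bound to a form whose $L^{p'}$ membership is what one is trying to prove.
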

\begin{proof}
We prove the first item. The second is proven by duality considering that $d^* \LF^{-1/2}$ is the adjoint of $d \Delta^{-1/2}$.
If the  $\vec{P}_t$ is bounded on $L^{p'}$, then by the reverse inequality on $L^p$ one has
\begin{align*}
\|d f\|_p &\leq C \|  \vec{P}_t (df) \|_p\\
&=C \left\|\left( \int_0^\infty \int_{B(x,t^{1/2})} |\vec{\Delta}^{1/2} e^{-t\vec{\Delta}^{1/2}} df|^2 \frac{\mathrm{d}y \mathrm{d}t}{Vol(y,t^{1/2})}    \right)^{1/2} \right\|_p\\
&= C \left\|\left( \int_0^\infty \int_{B(x,t^{1/2})} |d e^{-t\Delta^{1/2}} \Delta^{1/2} f|^2  \frac{\mathrm{d}y \mathrm{d}t}{Vol(y,t^{1/2})}     \right)^{1/2} \right\|_p\\
&= C \left\| P_x (\Delta^{1/2} f) \right\|_p \\
&\leq C \left\| \Delta^{1/2} f \right\|_p.
\end{align*}
For the second equality we used commutation formula $d \Delta = \vec{\Delta} d$. For the last inequality we used of the boundedness of $P_x$ on $L^p$.
\end{proof}

\begin{remark}
\begin{enumerate}
\item Fix $p\in[2,+\infty)$. Assuming \eqref{chap5-doubling}, $P_x$ is bounded on $L^p$. Then the boundedness of $\vec{P}_t$ on $L^{p'}$ implies the boundedness of Riesz transform on $L^p$. Unfortunately, for $p \leq 2$, $\vec{P}_t$ is even harder to bound than the horizontal Littlewood-Paley-Stein function for $\vec{\Delta}$  (which is known to be difficult for all $p \in (1, \infty)$). This can be done under subcriticality assumption on the negative part of the Ricci via Stein's method but we only recover a known result about Riesz transform.
\item For $p\in [2,+\infty)$, $\vec{\G}$ is bounded on $L^p$ if we assume  the \eqref{chap5-doubling}. Using a similar proof as in Theorem \ref{chap5-theo10-1} we see that it is sufficient to bound the functional 
$$\SSS_{\phi_0} (f) (x) = \left( \int_0^\infty \int_{B(x,t^{1/2})} | \Delta^{1/2} e^{-t\Delta} f|^2 \frac{\mathrm{d}y \mathrm{d}y}{Vol(y,t)}\right)^{1/2} $$ on $L^{p'}$ to obtain the boundedness of the Riesz transform on $L^{p'}$.
\end{enumerate}
\end{remark}
We recover a result from \cite{Chen-Magniez-Ouhabaz}, that is the boundedness of the Riesz transform under the hypothesis of Theorem \ref{chap5-ricci-subcritical-case}. The functional $\vec{\G}$ is bounded on $L^p$ for $p \in (p_0,2)$ by Theorem \ref{chap5-ricci-subcritical-case}. The functional $\SSS_{\phi_0}$ satisfies the reverse inequality for $p$ in this range, so the adjoint of the Riesz transform $d^* \vec{\Delta}^{-1/2}$ is bounded. It implies the boundedness of $d \Delta^{-1/2}$ on $L^p$ for $p \in [2,{p'}_0)$. More generally, it gives a proof of the following theorem.

\begin{theorem}
Let $p$ be in $(1,2]$. Suppose that $M$ satisfies the doubling property \eqref{chap5-doubling} and that $\sqrt{t}d^* e^{-t\vec{\Delta}}$ satisfies $L^p-L^2$ estimates \eqref{chap5-LpL2gradient}, then the Riesz transform is bounded on $L^{p'}$.
\end{theorem}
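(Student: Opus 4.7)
The plan rests on a clean algebraic identity stemming from the commutation relation $\vec{\Delta}\,d = d\,\Delta$: by the $L^2$ functional calculus, this extends to $\vec{\Delta}^{1/2} d = d\,\Delta^{1/2}$, and taking adjoints gives $\Delta^{1/2} d^* = d^* \vec{\Delta}^{1/2}$ on forms. Composed with $e^{-t\Delta}$ and $\vec{\Delta}^{-1/2}$, this yields, on a suitable dense domain,
$$ \Delta^{1/2} e^{-t\Delta} d^* \vec{\Delta}^{-1/2} \omega \;=\; d^* e^{-t\vec{\Delta}} \omega. $$
Matching the definitions, this produces the pointwise identity $\SSS_{\phi_0}\bigl(d^* \vec{\Delta}^{-1/2} \omega\bigr)(x) = \vec{\G}(\omega)(x)$ for every 1-form $\omega$ in the range of $\vec{\Delta}^{1/2}$.

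Given this, I would argue by duality. Fix $q \in (p,2]$ (the assumption of $L^p-L^2$ off-diagonal estimates combined with Theorem \ref{chap5-SC-formes} gives $\vec{\G}$ bounded on such $L^q$). The earlier proposition on $\SSS_{\phi_0}$ provides the reverse inequality $\|g\|_q \leq C\|\SSS_{\phi_0}(g)\|_q$ for $q \in (1,2]$. Applying it with $g = d^* \vec{\Delta}^{-1/2} \omega$ and invoking the identity above together with Theorem \ref{chap5-SC-formes}, I obtain
$$ \|d^* \vec{\Delta}^{-1/2} \omega\|_q \;\leq\; C\,\|\SSS_{\phi_0}(d^* \vec{\Delta}^{-1/2}\omega)\|_q \;=\; C\,\|\vec{\G}(\omega)\|_q \;\leq\; C\,\|\omega\|_q . $$
Hence $d^* \vec{\Delta}^{-1/2}$ is bounded on $L^q$ for every $q \in (p,2]$. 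Since $(d\Delta^{-1/2})^* = \Delta^{-1/2} d^* = d^* \vec{\Delta}^{-1/2}$, duality yields the boundedness of the Riesz transform $d \Delta^{-1/2}$ on $L^{q'}$ for every $q' \in [2, p')$.

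The remaining point is to reach the endpoint $q' = p'$. The cleanest route is to observe that the $L^p-L^2$ off-diagonal hypothesis on $\sqrt{t}\,d^* e^{-t\vec{\Delta}}$ self-improves: interpolation with the $L^2$ Davies-Gaffney estimate \eqref{chap5-DGformes} yields the same family of estimates for every exponent in a one-sided neighborhood of $p$, so applying the above argument at a slightly smaller exponent $\tilde{p}<p$ gives the conclusion at $p'$ itself. The main obstacle I anticipate is precisely this endpoint step: verifying the self-improvement in full generality (as opposed to concrete cases like Theorem \ref{chap5-ricci-subcritical-case}, where the hypotheses automatically hold on an open range). Apart from that, the argument is essentially algebraic, reducing the Riesz transform bound to the already proved boundedness of $\vec{\G}$ and the reverse inequality for $\SSS_{\phi_0}$.
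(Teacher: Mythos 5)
Your main argument is exactly the paper's: the commutation $\Delta d^*=d^*\vec{\Delta}$ gives $\SSS_{\phi_0}\bigl(d^*\vec{\Delta}^{-1/2}\omega\bigr)=\vec{\G}(\omega)$, and combining the reverse inequality for $\SSS_{\phi_0}$ on $L^q$, $q\in(1,2]$, with the boundedness of $\vec{\G}$ supplied by Theorem \ref{chap5-SC-formes} gives $\|d^*\vec{\Delta}^{-1/2}\omega\|_q\leq C\|\omega\|_q$, hence the Riesz transform on $L^{q'}$ by duality. This is correct and is precisely the route taken in the paragraph preceding the theorem; it yields the conclusion for $q\in(p,2]$, i.e. $q'\in[2,p')$.

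The endpoint step you add, however, does not work. Interpolating the hypothesised $L^p$--$L^2$ off-diagonal estimates with the $L^2$ Davies--Gaffney estimates \eqref{chap5-DGformes} produces $L^q$--$L^2$ estimates only for exponents $q$ \emph{between} $p$ and $2$ (Riesz--Thorin on each annulus interpolates the two endpoints you actually have); it cannot produce an estimate at any $\tilde p<p$, so there is no self-improvement to a one-sided neighbourhood below $p$. Since Theorem \ref{chap5-SC-formes} gives only weak type $(p,p)$ for $\vec{\G}$ at the endpoint, and weak type does not feed into the duality argument, the proof genuinely stops at $q'<p'$. You have in fact isolated an imprecision in the statement itself: the paper's own argument also only reaches $L^{q'}$ for $q'\in[2,p')$, consistent with the open range $(p_0,2)\mapsto[2,p_0')$ in the application to Theorem \ref{chap5-ricci-subcritical-case}, and it offers no justification for the closed endpoint $L^{p'}$. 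So: same proof as the paper away from the endpoint, and the one step you add to repair the endpoint is the step that fails.
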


\bibliographystyle{plain}
\bibliography{bibli}

\end{document}